%
%

\documentclass[twoside]{amsart}

\numberwithin{equation}{section}
\usepackage{amsmath,amssymb,amsfonts,amsthm,latexsym}
\usepackage{graphicx,color,enumerate}
\usepackage[margin=1.2in]{geometry}
\usepackage[all]{xy}

\newtheorem{theorem}{Theorem}[section]

\newtheorem{proposition}[theorem]{Proposition}
\newtheorem{corollary}[theorem]{Corollary}

\theoremstyle{definition}
\newtheorem{definition}[theorem]{Definition}
\newtheorem{example}[theorem]{Example}
\theoremstyle{remark}
\newtheorem{remark}[theorem]{Remark}

\numberwithin{equation}{section}

\newcommand{\C}{\mathbb{C}}
\newcommand{\Sp}{\mathbb{S}}

\newcommand{\R}{\mathbb{R}}

\newcommand{\Z}{\mathbb{Z}}

\newcommand{\SL}{\operatorname{SL}}

\newcommand{\lcm}{\operatorname{lcm}}

\newcommand{\git}{/\!\!/}
\newcommand{\bs}{\boldsymbol}

\newcommand{\Hilb}{\operatorname{Hilb}}

\DeclareMathOperator*{\Res}{Res}
\newcommand{\Vand}{\operatorname{V}}
\begin{document}

\title{The Hilbert series and $a$-invariant of circle invariants}

\author[L.~E.~Cowie]{L.~Emily Cowie}
\address{Department of Mathematics,
303 Lockett Hall,
Louisiana State University,
Baton Rouge, LA 70803}
\email{lcowie1@lsu.edu}

\author[H.-C.~Herbig]{Hans-Christian Herbig}
\address{Departamento de Matem\'{a}tica Aplicada,
Av. Athos da Silveira Ramos 149, Centro de Tecnologia - Bloco C, CEP: 21941-909 - Rio de Janeiro, Brazil}
\email{herbig@qgm.au.dk}

\author[D.~Herden]{Daniel Herden}
\address{Department of Mathematics, Baylor University,
One Bear Place \#97328,
Waco, TX 76798-7328, USA}
\email{Daniel\_Herden@baylor.edu}

\author[C.~Seaton]{Christopher Seaton}
\address{Department of Mathematics and Computer Science,
Rhodes College, 2000 N. Parkway, Memphis, TN 38112}
\email{seatonc@rhodes.edu}

\thanks{C.S. was supported by the E.C.~Ellett Professorship in Mathematics.}
\keywords{Hilbert series, circle invariants, $a$-invariant, Gorenstein ring,
    Schur polynomial}
\subjclass[2010]{Primary 13A50; Secondary 13H10, 05E05.}

\begin{abstract}
Let $V$ be a finite-dimensional representation of the complex circle $\C^\times$ determined by a weight vector
$\bs{a}\in\Z^n$. We study the Hilbert series $\Hilb_{\bs{a}}(t)$ of the graded algebra $\C[V]^{\C_{\bs{a}}^\times}$ of
polynomial $\C^\times$-invariants in terms of the weight vector $\bs{a}$ of the $\C^\times$-action. In particular, we give
explicit formulas for $\Hilb_{\bs{a}}(t)$ as well as the first four coefficients of the Laurent expansion of
$\Hilb_{\bs{a}}(t)$ at $t=1$. The naive formulas for these coefficients have removable singularities when weights pairwise
coincide. Identifying these cancelations, the Laurent coefficients are expressed using partial Schur polynomial that
are independently symmetric in two sets of variables. We similarly give an explicit formula for the $a$-invariant of $\C[V]^{\C_{\bs{a}}^\times}$ in the case that this algebra is Gorenstein. As an application, we give methods to identify
weight vectors with Gorenstein and non-Gorenstein invariant algebras.
\end{abstract}

\maketitle
\tableofcontents


\section{Introduction}
\label{sec:Intro}

Let $V$ be a finite-dimensional representation of a complex reductive group $G$ and let
$R=\C[V]^G$ denote the algebra of $G$-invariant polynomials. It is well known that this algebra
is a finitely generated graded algebra $R=\oplus_{m=0}^\infty R_m$ such that $R_0 = \C$.
The \emph{Hilbert series} of $R$ is the generating function
\[
    \Hilb_R(t)   =   \sum\limits_{m=0}^\infty \dim_\C(R_m)\: t^m.
\]
The Hilbert series is known to be rational with a pole at $t = 1$ of order
$\dim(R)$, the Krull dimension of $R$. It therefore admits a Laurent expansion of the form
\begin{equation}
\label{eq:DefGammas}
    \Hilb_R(t)  =   \sum\limits_{m=0}^\infty \gamma_m(R)(1 - t)^{m - \dim R},
\end{equation}
see \cite[Proposition 1.4.5 and Lemma 1.4.6]{DerskenKemperBook}.
The \emph{$a$-invariant} $a(R)$ of $R$ is defined to be the degree of $\Hilb_R(t)$, i.e. the
degree of the numerator minus the degree of the denominator.

The Hilbert series $\Hilb_R(t)$ contains important imformation about the algebra $R$
and is a relatively accessible quantity. It is used in constructive invariant theory, e.g., when computing generators and relations for $R$ (see for example \cite[Section 2.6]{DerskenKemperBook} and \cite[Chapter 2]{Popov}).
Similarly, the coefficients $\gamma_m(R)$ are often of significance. For instance
when $G$ is a finite group, $\gamma_0(R) = 1/|G|$
and $\gamma_1(R)$ determines the number of pseudoreflections in $G$; see
\cite[Lemma 2.4.4]{SturmfelsBook} or \cite[Sections 2.4 and 2.6]{BensonBook}.
When the coefficient field is $\mathbb{F}_p$, an analogous formula for $\gamma_1(\mathbb{F}_p[V]^G)$
in terms of the stabilizers of hyperplanes in $V$ was conjectured by Carlisle and Kropholler and
proven by Benson and Crawley--Boevey
\cite{BensonCrawley-Boevey}; see \cite[Sections 2.6 and 3.13]{BensonBook}.

When $R = \C[V]^G$ is the invariant ring of a reductive group $G$, less is known about the
meaning of the $\gamma_m(R)$. When $G = \SL_2(\C)$ and $V$ is irreducible, Hilbert gave an
explicit formula for $\gamma_0(R)$. Popov has shown \cite[3.3 Theorem 5]{Popov} that if $G$
is connected and semisimple, then for all but finitely many representations $V$,
$2\gamma_1(\C[V]^G)/\gamma_0(\C[V]^G) = \dim G$. See also \cite[Remark 4.6.16]{DerskenKemperBook}
and the references given there.

In this paper, we investigate the Hilbert series and first four Laurent coefficients when $G$ is a circle $\C^\times$. The techniques and results are parallel to
those in \cite{HerbigSeatonHilbSympCirc} where Hilbert series of symplectic circle quotients have been studied. The main observation in these calculations is that the naive formulas for the Laurent coefficients have removable singularities when certain weights pairwise coincide. By construction, the expressions for the $\gamma_m$'s are invariant with respect to permutations of the positive weights and permutations of the negative weights.
After removing the singularities in the expressions, the Laurent coefficients are written
in terms of a kind of partial Schur polynomial of the weights, which are independently symmetric in the positive and negative
weights. These calculations lead us in particular to a neat formula for the $a$-invariant of Gorenstein circle invariants; see
Corollary~\ref{cor:a-invar}. The phenomenon of removable singularities
in the formulas for Laurent coefficients appears to be a general feature of calculations of Laurent coefficients in the case
of reductive $G$ and deserves further attention.

Part of the motivation for studying the Hilbert series of $\C[V]^{\C^\times}$ is related to the question of
which representations of $\C^\times$ have Gorenstein invariant rings.
Because the canonical module of a ring of torus invariants is determined
explicitly in terms of covariants \cite[Theorem 6.4.2]{BrunsHerzog}, it is known that $\C[V]^{\C^\times}$
is Gorenstein for some but not all representations.  If the representation is unimodular, then the Gorenstein
property holds \cite[Corollary 6.4.3]{BrunsHerzog}, but unlike the case of finite groups that contain no
pseudoreflections \cite{WatanabeGor1,WatanabeGor2} this condition is not necessary. This approach to the
Gorenstein question, however, requires the computation of invariants and covariants, which can be computationally
expensive. Because $\C[V]^{\C^\times}$ is a normal domain by \cite[Proposition 6.4.1]{BrunsHerzog}, and
the Hochster-Roberts Theorem \cite{HochsterTori,HochsterRoberts} implies that $\C[V]^{\C^\times}$ is Cohen-Macaulay,
the Gorenstein property can be determined from $\Hilb_{\C[V]^{\C^\times}}(t)$ alone using the characterization of
Stanley \cite{Stanley} recalled in Equation~\eqref{eq:StanleyGoren} below. Our understanding of
$\Hilb_{\C[V]^{\C^\times}}(t)$ gives a method of checking the Gorenstein condition without computing the invariants.
In some cases, we can resolve this question without computing $\Hilb_{\C[V]^{\C^\times}}(t)$, see
Corollaries~\ref{cor:GorensteinInteger}, \ref{cor:GorenN2}, and \ref{cor:GorenK1}.

The contents of this paper are as follows.
After reviewing the necessary background and establishing notation in Section~\ref{sec:Back},
we compute the Hilbert series in Section~\ref{sec:HilbSer}; see Theorem~\ref{thrm:HilbSer}. As in
\cite{HerbigSeatonHilbSympCirc}, particular attention is paid to the case of a \emph{degenerate}
representation in which negative weights appear with multiplicity. This formula suggests an algorithm for
computing the Hilbert series as a rational function in the generic case, which we describe in
Section~\ref{sec:DirtyMethod}. We introduce a generalization of Schur polynomials in Section~\ref{sec:Schur}
in order to give explicit descriptions of the first few $\gamma_m$ in Section~\ref{sec:Laurent}.
In Section~\ref{sec:Goren}, we discuss the Gorenstein property of $\C[V]^G$ in this case and
give an explicit formula for the $a$-invariant when this property holds. In
Appendix~\ref{app:CMAlg}, we demonstrate how the Laurent coefficients of a Cohen-Macaulay domain
are related to the degrees of the elements of a Hironaka decomposition.


\section*{Acknowledgements}

Herbig and Seaton would like to thank Baylor University, and Herden and Seaton would like to thank
the Instituto de Matem\'{a}tica Pura e Aplicada (IMPA) for hospitality during work contained here.
This paper developed from Cowie's senior seminar project in
the Rhodes College Department of Mathematics and Computer Science, and the
authors gratefully acknowledge the support of the department and college for these
activities. We benefited from lecture notes of Leonid Petrov and follow his terminology
for the Laurent-Schur polynomials.


\section{Background and notation}
\label{sec:Back}

Let $V$ be a finite-dimensional unitary $\C^\times$-module. Choosing a basis for $V$ with respect to
which the $\C^\times$-action is diagonal, we can describe the action with a \emph{weight vector}
$\bs{a} = (a_1,a_2,\ldots,a_n)\in\Z^n$. Specifically, for $w \in \C^\times$ and $\bs{z} \in V$ with
coordinates $\bs{z} = (z_1,z_2,\ldots,z_n)$, we have
\[
    w\cdot(z_1,z_2,\ldots,z_n) = (w^{a_1} z_1, w^{a_2} z_2, \ldots, w^{a_n} z_n).
\]
To indicate the representation explicitly, we use the notation $\C[V]^{\C^\times_{\bs{a}}}$ to
denote the algebra of polynomials in $z_1,\ldots,z_n$ that are
invariant under this $\C^\times$-action. Similarly, let
$\Hilb_{\bs{a}}(t):= \Hilb_{\C[V]^{\C^\times_{\bs{a}}}}(t)$ denote the Hilbert series of
$\C[V]^{\C^\times_{\bs{a}}}$.

Recall that a representation $V$ is \emph{stable} if it contains an open set consisting of closed
orbits. If $V$ is not stable, then we may replace $V$ with a stable sub-representation without
changing the invariants \cite{WehlauPopov}, so we will assume stability with no loss of generality.
Similarly, the existence of a trivial subrepresentation has the trivial effect of multiplying
$\Hilb_{\bs{a}}(t)$ by $1/(1 - t)$, so we will frequently assume with no loss of generality that
$V^{\C_{\bs{a}}^\times}= \{0\}$, i.e. that $0$ does not appear as a weight. Finally, by taking the quotient
of $\C^\times$ by the kernel of the action on $V$, which does not change the invariants, we often
assume that $V$ is faithful.

It is easy to see that the $\C^\times$-module $V$ with weight vector $\bs{a}$ is stable if and only if $\bs{a}$
contains both positive and negative weights. Let $k$ be the number of negative weights; we will assume the weights
are ordered such that $a_i < 0$ for $i \leq k$ and $a_i > 0$ for $i > k$. The hypothesis that
$V$ is faithful corresponds to $\gcd(a_1,\ldots,a_n) = 1$.

We say the weight vector $\bs{a}$ is \emph{generic} if $a_i \neq a_j$ for $i \neq j$ with
$i, j \leq k$ and \emph{degenerate} otherwise. That is, degenerate weight vectors are those
that have repeated negative weights. Note that these properties are not invariants of the
representation; the weight vectors $\bs{a}$ and $-\bs{a}$ correspond to equivalent representations
though it is clearly possible that $\bs{a}$ is generic while $-\bs{a}$ is degenerate.

By the Molien-Weil formula \cite[Equation (4.6.2)]{DerskenKemperBook}, the Hilbert series
$\Hilb_{\bs{a}}(t)$ is given by
\begin{equation}
\label{eq:MolienWeil}
    \Hilb_{\bs{a}}(t) = \int\limits_{z\in\Sp^1} \frac{dz}{z \det_V (1 - t z)}.
\end{equation}
This will be the starting point of our computations, which follow the idea of
\cite[Section 4.6.4]{DerskenKemperBook}. As explained in the introduction, these computations
were conducted for \emph{cotangent-lifted} representations, those with weight vectors of the
form $(\bs{a}, -\bs{a})$, in \cite{HerbigSeatonHilbSympCirc}; though the regular functions on
the symplectic quotient were the focus of that paper, the computations corresponding to the
usual invariant ring are equivalent by \cite[Proposition 2.1]{HerbigSeatonHilbSympCirc}.
By \cite[Proposition (5.8)]{GWSLiftingHomotopies}, up to tensoring with $\C$, the invariant ring
corresponding to $(\bs{a}, -\bs{a})$ is equal to the ring of real invariants of the action with weight vector
$\bs{a}$. Hence, this paper constitutes a generalization of these results to arbitrary representations.

If $R$ is a Cohen-Macaulay ring, then the $a$-invariant $a(R)$ was defined in
\cite[Definition 3.1.4]{GotoWatanabe} to be the negative of the least degree of a generator
of the canonical module of $R$. This is equal to the degree of $\Hilb_R(t)$, i.e. the
degree of the numerator minus the degree of the denominator, and this is sometimes
taken as an extension of the definition of the $a$-invariant to any positively-generated
algebra over a field; see \cite[Theorem 4.4.3 and Definition 4.4.4]{BrunsHerzog}.
If $R$ is a Cohen-Macaulay normal domain, then by \cite[Theorem 4.4]{Stanley}, $R$ is
Gorenstein if and only if
\begin{equation}
\label{eq:StanleyGoren}
    \Hilb_R(1/t) = (-1)^{\dim R} t^{-a(R)}\Hilb_R(t).
\end{equation}
If $R$ is Gorenstein, then
\begin{equation}
\label{eq:AInvGammas}
    \frac{2\gamma_1(R)}{\gamma_0(R)} = -a(R) - \dim(R)
\end{equation}
where the $\gamma_m(R)$ denote the Laurent coefficients of $\Hilb_R(t)$ as in
Equation~\eqref{eq:DefGammas}. See \cite[Equation (3.32)]{PopovVinberg}, and note
that $q$ in that reference denotes $-a(R)$.


\section{Computation of the Hilbert series}
\label{sec:HilbSer}

In this section, we compute the Hilbert series $\Hilb_{\bs{a}}(t)$ of the invariants associated
to a weight vector $\bs{a}$. We first consider the case that $\bs{a}$ is generic in
Proposition~\ref{prop:HilbSumGeneric}. Though this is a special case of Theorem~\ref{thrm:HilbSer} below,
we include the brief proof, as it illustrates the fundamental idea behind the other computations
in this section.

\begin{proposition}
\label{prop:HilbSumGeneric}
Let $\bs{a} = (a_1,\ldots,a_n) \in \Z^n$ be a weight vector for an action of $\C^\times$
on $\C^n$. Assume that $a_i < 0$ for $i\leq k$ and $a_i > 0$ for $i > k$. Assume further that
$\bs{a}$ is generic ($a_i\neq a_j$ for $i\neq j$ and $i,j\leq k$) and stable ($0 < k < n$). Then
\begin{equation}
\label{eq:HilbSumGeneric}
    \Hilb_{\bs{a}}(t)
    =
    \sum\limits_{i=1}^k \sum\limits_{\zeta^{-a_i} = 1}
        \frac{1}{-a_i \prod\limits_{\substack{j=1\\j\neq i}}^n 1 - \zeta^{a_j} t^{(a_i-a_j)/a_i}}.
\end{equation}
\end{proposition}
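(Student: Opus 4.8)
The plan is to evaluate the Molien–Weil integral \eqref{eq:MolienWeil} by the residue theorem. Writing out the determinant, we have
\[
    \Hilb_{\bs{a}}(t)
    =
    \frac{1}{2\pi i}\oint\limits_{|z|=1}
        \frac{dz}{z\prod\limits_{j=1}^n\bigl(1 - t z^{a_j}\bigr)},
\]
where the contour is the unit circle traversed once counterclockwise. The first step is to identify which poles of the integrand lie inside the unit circle, under the standing assumption that $t$ is a real parameter with $0 < t < 1$ (the formula then extends to all $t$ by rational continuation, since both sides are rational functions of $t$). The factor $1 - tz^{a_j}$ with $a_j > 0$ vanishes at the $a_j$-th roots of $t^{-1}$, which have modulus $t^{-1/a_j} > 1$ and so lie outside; the factor with $a_j < 0$, say $a_j = -|a_j|$, vanishes where $z^{|a_j|} = t$, i.e. at points of modulus $t^{1/|a_j|} < 1$, hence inside. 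The factor $1/z$ contributes a pole at $z = 0$, but for $a_j > 0$ the factor $1 - tz^{a_j}$ is regular and nonzero there, while for $a_j < 0$ the factor $1 - tz^{a_j} = 1 - t z^{-|a_j|}$ has a pole at $z=0$ of order $|a_j|$, so in fact $z=0$ is a zero of the integrand (the $|a_j|$ in the denominators overwhelm the single $1/z$), not a pole; I will need to note this so that $z=0$ contributes nothing.

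The second step is to organize the interior poles. For each $i \leq k$, set $a_i = -|a_i|$; the $|a_i|$-th roots of $t$ inside the disk are the points $z$ with $z^{-a_i} = t$. It is cleanest to substitute $z = \zeta\, t^{1/(-a_i)}$ where $\zeta$ ranges over the $(-a_i)$-th roots of unity, so these are exactly the points parametrized by the inner sum in \eqref{eq:HilbSumGeneric}. Genericity of the negative weights guarantees that for $i \neq i'$ the sets $\{z : z^{-a_i} = t\}$ and $\{z : z^{-a_{i'}} = t\}$ are disjoint (a common point would force $t^{a_{i'}} = t^{a_i}$ hence $a_i = a_{i'}$, using $0 < t < 1$), and moreover each such pole is simple, since the only way $1 - tz^{-a_i}$ could share a root with another factor $1 - tz^{-a_{i'}}$ is again ruled out by genericity, and it cannot share a root with a positive-weight factor because those have all their roots outside the disk. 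So the integrand has only simple poles inside $|z|=1$, indexed exactly by the pairs $(i,\zeta)$.

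The third step is the residue computation itself. By the residue theorem,
\[
    \Hilb_{\bs{a}}(t)
    =
    \sum\limits_{i=1}^k\ \sum\limits_{\zeta^{-a_i}=1}
        \Res_{z = \zeta t^{1/(-a_i)}}
        \frac{1}{z\prod\limits_{j=1}^n\bigl(1 - t z^{a_j}\bigr)}.
\]
At a simple pole $z_0 = \zeta t^{1/(-a_i)}$ coming from the factor $1 - tz^{a_i}$, the residue is
\[
    \frac{1}{z_0\,\dfrac{d}{dz}\bigl(1 - t z^{a_i}\bigr)\big|_{z_0}\ \prod\limits_{j\neq i}\bigl(1 - t z_0^{a_j}\bigr)},
\]
and since $\frac{d}{dz}(1 - tz^{a_i}) = -t a_i z^{a_i-1}$, the prefactor $z_0 \cdot(-t a_i z_0^{a_i-1}) = -t a_i z_0^{a_i} = -a_i$ because $t z_0^{a_i} = 1$ at the pole. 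Thus the residue is $1/\bigl(-a_i\prod_{j\neq i}(1 - t z_0^{a_j})\bigr)$. Finally I substitute $z_0 = \zeta\, t^{-1/a_i}$ into $t z_0^{a_j} = t\,\zeta^{a_j}\, t^{-a_j/a_i} = \zeta^{a_j}\, t^{(a_i - a_j)/a_i}$, which yields precisely the summand in \eqref{eq:HilbSumGeneric}. The main obstacle is purely bookkeeping: being careful with the fractional powers $t^{1/(-a_i)}$ (choosing one branch consistently, which is harmless since $\zeta$ then sweeps out all branches), confirming via genericity and $0<t<1$ that all interior poles are simple and disjoint, and checking that $z=0$ contributes nothing; none of these is deep, but each must be handled explicitly for the residue sum to come out clean.
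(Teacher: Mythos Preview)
Your proof is correct and follows essentially the same approach as the paper: apply the Molien--Weyl formula, identify the interior poles at the $(-a_i)$th roots of $t$, observe that genericity makes them simple, and sum the residues. The only cosmetic difference is that the paper computes each residue by explicitly factoring $z^{-a_i}-t=\prod_{\zeta^{-a_i}=1}(z-\zeta t^{-1/a_i})$ and simplifying the resulting product, whereas you use the derivative formula for a simple pole; these are equivalent one-line computations.
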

\begin{proof}
By the Molien--Weyl formula, Equation~\eqref{eq:MolienWeil}, we have
\[
    \Hilb_{\bs{a}}(t)
    =
    \frac{1}{2\pi i}
    \int_{\Sp^1} \frac{dz}{z \prod_{j=1}^n (1 - z^{a_j} t)}
    =
    \frac{1}{2\pi i}
    \int_{\Sp^1} \frac{z^{-1- \sum_{j=1}^k a_j} dz}{\prod\limits_{j=1}^k (z^{-a_j} - t)
        \prod\limits_{j=k+1}^n (1 - z^{a_j}t)}.
\]
From the latter expression, we see that for fixed $t$ with $|t|<1$,
the poles inside the unit circle occur when $z^{-a_j} = t$ for $1\leq j\leq k$.
Fix an $i$ with $1\leq i \leq k$ and an $-a_i$th root of unity $\zeta_0$,
and let $t^{-1/a_i}$ be defined with respect to a fixed, suitably chosen branch of the
$\log$ function. We then express
\begin{align}
    \nonumber
    \Hilb_{\bs{a}}(t)
    &=
    \frac{1}{2\pi i}
    \int_{\Sp^1} \frac{z^{-a_i - 1} dz}{ (z^{-a_i} - t)
        \prod\limits_{\substack{j=1 \\ j\neq i}}^n (1 - z^{a_j} t)}
    \\ \label{eq:GenericResidueDeviation}
    &=
    \frac{1}{2\pi i}
    \int_{\Sp^1} \frac{z^{-a_i-1} dz}{ (z - \zeta_0 t^{-1/a_i})
        \prod\limits_{\substack{\zeta^{-a_i}=1 \\ \zeta\neq\zeta_0}} (z - \zeta t^{-1/a_i})
        \prod\limits_{\substack{j=1 \\ j\neq i}}^n (1 - z^{a_j} t)}.
\end{align}
Hence, the residue at $\zeta_0 t^{-1/a_i}$ is given by
\begin{align*}
    \Res\limits_{z=\zeta_0 t^{-1/a_i}} f(z,t)
    &=
    \frac{(\zeta_0 t^{-1/a_i})^{-a_i-1}}{
        \prod\limits_{\substack{\zeta^{-a_i}=1 \\ \zeta\neq\zeta_0}}
        (\zeta_0 t^{-1/a_i} - \zeta t^{-1/a_i})
        \prod\limits_{\substack{j=1 \\ j\neq i}}^n (1 - t (\zeta_0 t^{-1/a_i})^{a_j})}
    \\&=
    \frac{1}{ -a_i
        \prod\limits_{\substack{j=1 \\ j\neq i}}^n (1 - \zeta_0^{a_j} t^{(a_i-a_j)/a_i})},
\end{align*}
where we simplify using
\[
    \prod\limits_{\substack{\zeta^{-a_i}=1 \\ \zeta\neq\zeta_0}} (\zeta_0 t^{-1/a_i} - \zeta t^{-1/a_i})
    =
    (\zeta_0t^{-1/a_i})^{-a_i-1}  \prod\limits_{\substack{\zeta^{-a_i}=1 \\ \zeta\neq 1}} (1 - \zeta)
    =
    \zeta_0^{-1} t^{(a_i+1)/a_i}(-a_i).
\]
Summing the residues over all choices of $i \leq k$ and $\zeta_0$, yields the claim.
\end{proof}

We now consider the same computation in the case that $\bs{a}$ is degenerate. The change occurs
in the decomposition of the integrand given in Equation~\eqref{eq:GenericResidueDeviation}. If
$a_i = a_j$ for some $j$ (which must be $\leq k$), then the factor $(1 - z^{a_j}t)$ also vanishes
at $z = \zeta_0 t^{-1/a_i}$, which is no longer a simple pole of the integrand.

We give two ways of approaching this computation. The first, yielding Proposition~\ref{prop:HilbSumDegen1},
is simply to compute each such residue. This results in Equation~\eqref{eq:HilbSumDegen1} which, while not
particularly elucidating, is useful for calculations of specific examples. The second method, yielding
Theorem~\ref{thrm:HilbSer}, is to introduce new variables so that each pole remains simple. This result
essentially demonstrates that, treating the weights $a_i$ in Equation~\eqref{eq:GenericResidueDeviation}
as real variables except in choosing the root of unity $\zeta$, the apparent singularities that occur in
factors of the form $1 - \zeta^{a_j} t^{(a_i-a_j)/a_i}$ when $\zeta = 1$ and $a_i = a_j$ are in fact
removable, and the Hilbert series at degeneracies is the extension of this continuous function of the
weights at these removable singularities.

We begin with the first method. To simplify the argument, we use a slightly different notation to index
the weights.

\begin{proposition}
\label{prop:HilbSumDegen1}
Let $(\bs{a},\bs{b})$ be a weight vector for an action of $\C^\times$ on $\C^n$, where
\[
    \bs{a}
    =   (\overbrace{a_1,\ldots,a_1}^{r_1},
        \overbrace{a_2,\ldots,a_2}^{r_2},
        \ldots,
        \overbrace{a_k,\ldots,a_k}^{r_k}),
    \quad\quad
    \bs{b}
    =    (b_1, \ldots, b_m),
\]
each $r_i \geq 1$, and $n = m+\sum_{i=1}^k r_i$.
We assume that each $a_i < 0$, that each $b_i > 0$, and that the representation is stable ($k > 0$ and $m > 0$);
note that the $b_i$ need not be distinct. Then
\begin{equation}
\label{eq:HilbSumDegen1}
    \Hilb_{(\bs{a},\bs{b})}(t)
    =
    \sum\limits_{i=1}^k \sum\limits_{\zeta_0^{-a_i} = 1}
        \left. \frac{\partial^{r_i-1}}{\partial z^{r_i-1}}\right|_{z=\zeta_0 t^{-1/a_i}}
        \frac{1}{(r_i-1)!}F_{i,\bs{a},\bs{b}}(z, t),
\end{equation}
where
\begin{equation}
\label{eq:TermFunctionDegen1}
    F_{i,\bs{a},\bs{b}}(z, t) =
    \frac{z^{-r_i a_i - 1}}{
        \prod\limits_{\substack{\zeta^{-a_i}=1 \\ \zeta\neq\zeta_0}} (z - \zeta t^{-1/a_j})^{r_i}
        \prod\limits_{\substack{j=1 \\ j\neq i}}^k (1 - z^{a_j} t)
        \prod\limits_{j=1}^m (1 - z^{b_j}t)}.
\end{equation}
\end{proposition}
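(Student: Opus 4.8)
The plan is to follow the same residue computation as in the proof of Proposition~\ref{prop:HilbSumGeneric}, but to account for the higher-order poles that arise when a negative weight is repeated. Starting from the Molien--Weyl integral \eqref{eq:MolienWeil}, I would first rewrite it, exactly as in the generic case, by factoring powers of $z$ out of the denominator:
\[
    \Hilb_{(\bs{a},\bs{b})}(t)
    =
    \frac{1}{2\pi i}
    \int_{\Sp^1} \frac{z^{-1-\sum_{i=1}^k r_i a_i}\,dz}
        {\prod\limits_{i=1}^k (z^{-a_i} - t)^{r_i} \prod\limits_{j=1}^m (1 - z^{b_j} t)}.
\]
For $|t| < 1$ small, the poles inside the unit circle are precisely the points $z = \zeta_0 t^{-1/a_i}$ with $\zeta_0^{-a_i} = 1$ and $1 \le i \le k$; since $\bs{a}$ groups equal negative weights together, each such point is a pole of order exactly $r_i$ (the factor $(z^{-a_i} - t)^{r_i}$ contributes a zero of order $r_i$ at each of its $-a_i$ roots, and the remaining factors are nonzero there because the $a_j$ for $j \ne i$ are distinct from $a_i$ and the $b_j$ are positive).

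Next I would compute the residue at a fixed pole $z = \zeta_0 t^{-1/a_i}$. Factoring $(z^{-a_i} - t)^{r_i} = \prod_{\zeta^{-a_i}=1}(z - \zeta t^{-1/a_i})^{r_i}$ and peeling off the single factor $(z - \zeta_0 t^{-1/a_i})^{r_i}$, the integrand becomes $(z - \zeta_0 t^{-1/a_i})^{-r_i} F_{i,\bs{a},\bs{b}}(z,t)$ with $F_{i,\bs{a},\bs{b}}$ holomorphic and nonvanishing near the pole. The residue at a pole of order $r_i$ is then the standard formula
\[
    \Res_{z = \zeta_0 t^{-1/a_i}}
    = \frac{1}{(r_i-1)!}\left.\frac{\partial^{r_i-1}}{\partial z^{r_i-1}}\right|_{z = \zeta_0 t^{-1/a_i}} F_{i,\bs{a},\bs{b}}(z,t),
\]
which is exactly the term appearing in \eqref{eq:HilbSumDegen1}. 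Summing over $i$ and over all $-a_i$th roots of unity $\zeta_0$, and invoking the residue theorem, yields the claimed formula. (One should note the typo-level point that $F_{i,\bs{a},\bs{b}}$ as written has a $t^{-1/a_j}$ in the product over $\zeta \ne \zeta_0$ that ought to read $t^{-1/a_i}$, since all those factors come from the single factor $(z^{-a_i}-t)^{r_i}$.)

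The computation is essentially routine once the pole structure is identified, so the only real subtlety — and the step I would be most careful about — is the branch-of-logarithm bookkeeping needed to make sense of $t^{-1/a_i}$ and the fractional powers $t^{(a_i - a_j)/a_i}$ appearing implicitly inside $F_{i,\bs{a},\bs{b}}$, together with checking that the final expression is independent of the chosen branch (as it must be, since the left-hand side is). As in Proposition~\ref{prop:HilbSumGeneric}, I would fix a suitable branch of $\log$ throughout, verify that each individual residue, while branch-dependent in appearance, organizes into a single-valued function after the derivative is taken, and observe that the sum over all roots of unity $\zeta_0$ restores full single-valuedness. I would also remark that setting every $r_i = 1$ recovers Proposition~\ref{prop:HilbSumGeneric}, which serves as a consistency check on the formula and on the normalization of $F_{i,\bs{a},\bs{b}}$.
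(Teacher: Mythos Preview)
Your proposal is correct and follows essentially the same argument as the paper: rewrite the Molien--Weyl integrand so the poles inside the unit circle are visibly the $(-a_i)$th roots of $t$ with multiplicity $r_i$, then apply the standard higher-order residue formula and sum. Your typo observation is apt; note also that the factor $\prod_{j\neq i}(1 - z^{a_j}t)$ in the stated $F_{i,\bs{a},\bs{b}}$ should carry the exponent $r_j$, as the paper's own proof makes clear.
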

\begin{proof}
By the Molien--Weyl formula,
\begin{align*}
    \Hilb_{(\bs{a},\bs{b})}(t)
    &=
    \frac{1}{2\pi i}
    \int_{\Sp^1} \frac{dz}{z \prod_{j=1}^k (1 - z^{a_j}t)^{r_j} \prod\limits_{j=1}^m (1 - z^{b_j}t)}
    \\&=
    \frac{1}{2\pi i}
    \int_{\Sp^1} \frac{z^{-1 - \sum_{j=1}^k r_j a_j} dz}{\prod\limits_{j=1}^k (z^{-a_j} - t)^{r_j}
        \prod\limits_{j=1}^m (1 - z^{b_j}t)}.
\end{align*}
As in the proof of Proposition~\ref{prop:HilbSumGeneric}, for fixed $t$ with $|t|<1$,
the poles inside the unit circle occur when $z^{-a_j} = t$ for some $j$. Fix an $i$ with
$1\leq i \leq k$ and an $-a_i$th root of unity $\zeta_0$, and let $t^{-1/a_i}$ be defined
with respect to a suitable fixed branch of the $\log$ function. Then we can express
$\Hilb_{(\bs{a},\bs{b})}(t)$ as
\[
    \frac{1}{2\pi i}
    \int_{\Sp^1} \frac{z^{-r_i a_i - 1} \, dz}{ (z - \zeta_0 t^{-1/a_i})^{r_i}
        \prod\limits_{\substack{\zeta^{-a_i}=1 \\ \zeta\neq\zeta_0}} (z - \zeta t^{-1/a_i})^{r_i}
        \prod\limits_{\substack{j=1 \\ j\neq i}}^k (1 - z^{a_j}t)^{r_j}
        \prod\limits_{j=1}^m (1 - z^{b_j}t)}.
\]
Noting that the factor $F_{i,\bs{a},\bs{b}}(z, t)$ of the integrand, as defined in
Equation~\eqref{eq:TermFunctionDegen1},
is holomorphic at $t = \zeta_0 t^{-1/a_i}$, the residue at $t = \zeta_0 t^{-1/a_i}$ is given by
the $(r_i-1)$st coefficient of the Taylor series of this function at $z = \zeta_0 t^{-1/a_i}$,
i.e.
\[
    \Res\limits_{z=\zeta_0 t^{-1/a_i}} f(z,t)
    =
    \left. \frac{\partial^{r_i-1}}{\partial z^{r_i-1}}\right|_{z=\zeta_0 t^{-1/a_i}}
        \frac{1}{(r_i-1)!}F_{i,\bs{a},\bs{b}}(z, t).
\]
Summing over each pole completes the proof.
\end{proof}

We now turn to the second method for dealing with degeneracies and prove the following,
the main result of this section.

\begin{theorem}
\label{thrm:HilbSer}
Let $\bs{a} = (a_1,\ldots,a_n) \in \Z^n$ be a weight vector for an action of $\C^\times$
on $\C^n$. Assume that $a_i < 0$ for $i\leq k$ and $a_i > 0$ for $i > k$, and moreover that
$\bs{a}$ is stable ($0 < k < n$). Then
\begin{equation}
\label{eq:HilbSumDegen2Unif}
    \Hilb_{\bs{a}}(t)
    =
    \lim\limits_{\bs{c}\to \bs{a}}
    \sum\limits_{i=1}^k \sum\limits_{\zeta^{-a_i} = 1}
        \frac{1}{-c_i \prod\limits_{\substack{j=1\\j\neq i}}^n 1 - \zeta^{a_j} t^{(c_i-c_j)/c_i}},
\end{equation}
$\bs{c}=(c_1,\ldots,c_n)\in \R^n$.
\end{theorem}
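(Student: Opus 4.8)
The plan is to show that the right-hand side of \eqref{eq:HilbSumDegen2Unif}, viewed as a function of $\bs{c} \in \R^n$ with the roots of unity $\zeta$ frozen (i.e. $\zeta$ ranges over $-a_i$th roots of unity even though the exponents involve the real variables $c_i, c_j$), extends continuously to $\bs{c} = \bs{a}$, and that its value there agrees with $\Hilb_{\bs{a}}(t)$. The key idea is that the summand in Proposition~\ref{prop:HilbSumGeneric} is, for \emph{generic} real $\bs{c}$ with the same sign pattern as $\bs{a}$ and the same negative entries $c_i$ as the $a_i$ appearing in the exponents $\zeta^{-a_i}$, the residue sum computing a ``Hilbert series'' of a non-integer weight action; but more to the point, each individual term is a rational function of $t^{1/c_i}$ whose only obstruction to being defined at $\bs{c} = \bs{a}$ is a factor $1 - \zeta^{a_j} t^{(c_i - c_j)/c_i}$ going to $0$ when $\zeta = 1$ and $c_i \to a_i = a_j$. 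So I would first isolate those bad factors and show the singularities are removable, then identify the limit with the degenerate formula of Proposition~\ref{prop:HilbSumDegen1}, which in turn equals $\Hilb_{\bs{a}}(t)$.

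Concretely, I would proceed as follows. First, fix $\bs{a}$ and group its negative entries into blocks of equal values $a_1 < a_2 < \cdots$ with multiplicities $r_1, r_2, \ldots$ as in Proposition~\ref{prop:HilbSumDegen1}; perturb to $\bs{c}$ by replacing each block $(a_\ell, \ldots, a_\ell)$ with distinct reals $(c_{\ell,1}, \ldots, c_{\ell,r_\ell})$ all close to $a_\ell$ (and keeping positive weights fixed, or perturbing them too — it will not matter). For such generic $\bs{c}$, Proposition~\ref{prop:HilbSumGeneric} (whose proof goes through verbatim for real weights, since it is just a residue computation, provided one keeps $\zeta$ ranging over $-a_i$th roots of unity — here one must be slightly careful and observe that the residue calculation only used that $z^{-c_i} - t$ has the roots $\zeta_0 t^{-1/c_i}$ and that these are distinct, which holds for generic real $c_i$) expresses the sum as $\Hilb$ of the perturbed action. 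Then I would collect, for a fixed block index $\ell$ and fixed primitive structure, the $r_\ell$ terms coming from $i$ in that block together with their $\zeta = 1$ contributions, and show that the sum of the pieces that individually blow up is a symmetric rational function in the $c_{\ell,1}, \ldots, c_{\ell,r_\ell}$ with no actual pole on the diagonal — the standard mechanism by which a sum $\sum_i \prod_{j \neq i}(c_i - c_j)^{-1}(\cdots)$ of simple-pole terms limits to a derivative. This is exactly the confluence of $r_\ell$ simple poles of $1/\prod_j(z^{-c_j} - t)$ into one pole of order $r_\ell$ of $1/(z^{-a_\ell} - t)^{r_\ell}$.

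The cleanest route to make the last step rigorous is to do the residue computation itself uniformly in $\bs{c}$: for $\bs{c}$ near $\bs{a}$, write $\Hilb_{\bs{c}}(t) = \sum \Res$ where the sum is over the poles $z = \zeta_0 t^{-1/c_i}$ inside $\Sp^1$, but now group these poles by which limiting pole $\zeta_0^\prime t^{-1/a_\ell}$ they approach as $\bs{c} \to \bs{a}$; the sum of residues in each group is a contour integral of the integrand around a small fixed circle enclosing that cluster, and this contour integral depends continuously (indeed holomorphically) on $\bs{c}$ all the way to $\bs{c} = \bs{a}$, where it becomes the single higher-order residue of Proposition~\ref{prop:HilbSumDegen1}. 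Summing over clusters gives $\lim_{\bs{c}\to\bs{a}} \Hilb_{\bs{c}}(t) = \Hilb_{\bs{a}}(t)$, which is the assertion. I expect the main obstacle to be bookkeeping: one must (a) confirm that for all $\bs{c}$ in a punctured neighborhood of $\bs{a}$ (with the chosen sign pattern and with $\zeta$ kept discrete) each cluster of poles stays strictly inside $\Sp^1$ and bounded away from the positive-weight poles, so a fixed small contour works uniformly, and (b) check that the ``frozen $\zeta$'' convention in \eqref{eq:HilbSumDegen2Unif} matches the poles actually encircled — i.e. that as $c_i \to a_i$ the $-c_i$th roots of unity in the residue formula are literally the $-a_i$th roots of unity, so no roots of unity are gained or lost in the limit. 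Both are straightforward but need to be stated carefully; once they are in place, the continuity of a contour integral in parameters finishes the proof with essentially no further computation.
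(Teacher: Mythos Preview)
Your high-level strategy is sound: show the singularities of the $\bs{c}$-sum at $\bs{c}=\bs{a}$ are removable, and identify the limit with the higher-order residue formula of Proposition~\ref{prop:HilbSumDegen1}. The confluence-of-simple-poles heuristic is exactly the right picture. However, the mechanism you propose to make it rigorous has a genuine gap.

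You repeatedly invoke a Molien--Weyl integrand with \emph{real} exponents $c_i$, asserting that ``Proposition~\ref{prop:HilbSumGeneric} goes through verbatim for real weights'' because ``$z^{-c_i}-t$ has the roots $\zeta_0 t^{-1/c_i}$.'' This is false: for non-integer $c_i$ the function $z\mapsto z^{-c_i}$ is not single-valued on $\Sp^1$, and on any branch the equation $z^{-c_i}=t$ has a single solution, not $-a_i$ many. There is no meromorphic integrand in $z$ with simple poles precisely at the points $\zeta_0 t^{-1/c_i}$, $\zeta_0^{-a_i}=1$, obtained by putting $c_i$ in the exponent of $z$. Consequently your ``cleanest route'' --- deforming to $\Hilb_{\bs{c}}(t)$ and integrating around clusters --- has no integrand to integrate, and the remark about ``$-c_i$th roots of unity'' in point~(b) is a symptom of the same confusion.

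The paper's fix is to perturb in the $t$-direction rather than the weight-direction: in each factor of a degenerate block replace $t$ by a separate variable $x_j$, so the integrand $1\big/\big(z\prod_j(1-z^{a}x_j)\cdots\big)$ remains meromorphic in $z$ with simple poles at $\zeta x_j^{-1/a}$. One then computes the residue sum, shows its numerator is alternating in the $x_j$ and hence divisible by the Vandermonde (this is the removable-singularity step), and only \emph{afterwards} substitutes $x_j=t^{t_j}$ and $c_i=a/t_i$ to transform the expression into the $\bs{c}$-form asserted in \eqref{eq:HilbSumDegen2Unif}. Your contour-around-clusters argument works perfectly well once you use this integrand, but the reparametrization step connecting the $x_j$-residue sum to the $\bs{c}$-formula is nontrivial and is missing from your outline. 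A Dominated Convergence argument then justifies exchanging the limit with the Molien--Weyl integral.
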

Note that we may consider $\bs{c} = (c_1,\ldots,c_k,a_{k+1},\ldots,a_n)$, i.e. take $c_i = a_i$ for $i > k$
as, in the computation below, we require $c_i \neq c_j$ for $i\neq j$ only when $i,j\leq k$.
\begin{proof}
We consider the computation of the residue at a point satisfying $t = z^{-a_i}$ where $a_i<0$ is
a degenerate weight. For simplicity, we relabel the weights to express the weight matrix
as $(a, \ldots, a, a_1, \ldots, a_{k-q}, b_1,\ldots, b_m)$ where $a < 0$ occurs $q$ times, $a_i < 0$
for each $i$, and $a \neq a_i$ for each $i$. We do not require that the $a_i$ nor the $b_i$ are
distinct. Consider the integral
\begin{equation}
\label{eq:HilbSerDegen2Integral}
    \frac{1}{2\pi i}
    \int_{\Sp^1} \frac{dz}{z \prod\limits_{j=1}^q (1 - z^{a} x_j) \prod\limits_{j=1}^{k-q} (1 - z^{a_j} y_j)
    \prod\limits_{j=1}^m (1 - z^{b_j} w_j)}.
\end{equation}
Here, $|x_j|, |y_j|, |w_j| < 1$ and the $x_j$ are assumed distinct. Let
$\bs{x} = (x_1,\ldots,x_q)\in\C^q$,
\linebreak
$\bs{y} = (y_1,\ldots,y_{k-q})\in\C^{k-q}$, and
$\bs{w} = (w_1,\ldots,w_m)\in\C^m$. These additional variables are introduced to force
each pole to be simple; the idea is to compute this integral and then take the limit
as $\bs{x} \to \Delta_q t:= (t, \ldots, t) \in \C^q$,
$\bs{y} \to \Delta_{k-q} t:= (t, \ldots, t) \in \C^{k-q}$, and
$\bs{w} \to \Delta_m t:= (t, \ldots, t) \in \C^m$.

Fix a branch of the logarithm near $t$. We will assume throughout that each $x_i$, $y_i$, and $w_i$
is contained in the domain of this branch. Let
\[
    F(\bs{x},\bs{y},\bs{w},z)
    :=
    \frac{1}{z \prod\limits_{j=1}^q (1 - z^{a} x_j) \prod\limits_{j=1}^{k-q} (1 - z^{a_j} y_j)
    \prod\limits_{j=1}^m (1 - z^{b_j} w_j)}.
\]
As a function of $z$, the poles of $F(\bs{x},\bs{y},\bs{w},z)$ for $|z| < 1$
occur at $z = \zeta x_i^{-1/a}$ where $1 \leq i \leq q$ and $\zeta$ is a $(-a)$th root of unity
or $z = \eta y_i^{-1/a_i}$ where $1 \leq i \leq k-q$ and
$\eta$ is a $(-a_i)$th root of unity. Hence, the integral in Equation~\eqref{eq:HilbSerDegen2Integral}
is given by
\[
    \sum\limits_{\zeta^a=1} \sum\limits_{i=1}^q
    \Res\limits_{z=\zeta x_i^{-1/a}} F(\bs{x},\bs{y},\bs{w},z)
    +
    \sum\limits_{i=1}^{k-q} \sum\limits_{\eta^{a_i}=1}
    \Res\limits_{z=\eta y_i^{-1/a_i}} F(\bs{x},\bs{y},\bs{w},z).
\]

Fix a $(-a)$th root of unity $\zeta_0$. We claim that
\[
    R_a(\bs{x},\bs{y},\bs{w},\zeta_0)
    := \sum\limits_{i=1}^q\Res\limits_{z=\zeta_0 x_i^{-1/a}} F(\bs{x},\bs{y},\bs{w},z)
\]
admits an analytic continuation whose domain includes $x_1=x_2=\cdots=x_q=t$.
To see this, fix an $i$, and then we express $F(\bs{x},\bs{y},\bs{w},z)$ as
\[
    \frac{z^{-a-1}}{(z - \zeta_0 x_i^{-1/a})
    \prod\limits_{\substack{\zeta^{-a}=1\\ \zeta\neq\zeta_0}} (z - \zeta x_i^{-1/a})
    \prod\limits_{\substack{j=1 \\ j\neq i}}^q (1 - z^{a} x_j)
    \prod\limits_{j=1}^{k-q} (1 - z^{a_j} y_j)
    \prod\limits_{j=1}^m (1 - z^{b_j} w_j)}.
\]
Hence we have a simple pole at $\zeta_0 x_i^{-1/a}$. Computing the reside and simplifying
following the same steps as in the proof of Proposition~\ref{prop:HilbSumGeneric}, we express
the residue $\Res_{z= \zeta_0 x_i^{-1/a}} F(\bs{x},\bs{y},\bs{w},z)$ as
\[
    \frac{ x_i^{q-1} }{-a
    \prod\limits_{\substack{j=1 \\ j\neq i}}^q (x_i - x_j)
    \prod\limits_{j=1}^{k-q} (1 - \zeta_0^{a_j} x_i^{-a_j/a} y_j)
    \prod\limits_{j=1}^m (1 - \zeta_0^{b_j} x_i^{-b_j/a} w_j)}.
\]
Summing over $i=1,\ldots,q$ and combining the result into a single rational expression,
we express $R_a(\bs{x},\bs{y},\bs{w},\zeta_0) = \sum_{i=1}^q \Res_{z= \zeta_0 x_i^{-1/a}} F(\bs{x},\bs{y},\bs{w},z)$
as
\[
    \frac{ \sum\limits_{i=1}^q (-1)^{q-i} x_i^{q-1}
    \prod\limits_{\substack{1 \leq j < \ell \leq q \\ j, \ell \neq i}} (x_\ell - x_j)
    \prod\limits_{\substack{\ell=1\\ \ell\neq i}}^q \prod\limits_{j=1}^{k-q} (1 - \zeta_0^{a_j} x_\ell^{-a_j/a} y_j)
    \prod\limits_{\substack{\ell=1\\ \ell\neq i}}^q \prod\limits_{j=1}^m (1 - \zeta_0^{b_j} x_\ell^{-b_j/a} w_j)
    }{-a
    \prod\limits_{1 \leq j < \ell \leq q} (x_\ell - x_j)
    \prod\limits_{\ell=1}^q \prod\limits_{j=1}^{k-q} (1 - \zeta_0^{a_j} x_\ell^{-a_j/a} y_j)
    \prod\limits_{\ell=1}^q \prod\limits_{j=1}^m (1 - \zeta_0^{b_j} x_\ell^{-b_j/a} w_j)}.
\]
We claim that the numerator of $R_a(\bs{x},\bs{y},\bs{w},\zeta_0)$ is alternating in the $x_1,\ldots,x_q$.
To see this, define
\begin{align*}
    &\alpha_i(\bs{x},\bs{y},\bs{w},\zeta_0):=
    \\&\quad
    (-1)^{q-i} x_i^{q-1}
    \prod\limits_{\substack{1 \leq j < \ell \leq q \\ j, \ell \neq i}} (x_\ell - x_j)
    \prod\limits_{\substack{\ell=1\\ \ell\neq i}}^q \prod\limits_{j=1}^{k-q} (1 - \zeta_0^{a_j} x_\ell^{-a_j/a} y_j)
    \prod\limits_{\substack{\ell=1\\ \ell\neq i}}^q \prod\limits_{j=1}^m (1 - \zeta_0^{b_j} x_\ell^{-b_j/a} w_j)
\end{align*}
so that the numerator of $R_a(\bs{x},\bs{y},\bs{w},\zeta_0)$ is equal to $\sum_{i=1}^q \alpha_i(\bs{x},\bs{y},\bs{w},\zeta_0)$.
It is easy to see that for a transposition
$\sigma\in\mathcal{S}_q$,
$\alpha_i(x_{\sigma(1)},\ldots,x_{\sigma{q}},\bs{y},\bs{w},\zeta_0) = -\alpha_{\sigma(i)}(x_1,\ldots,x_q,\bs{y},\bs{w},\zeta_0)$,
implying that the numerator of $R_a(\bs{x},\bs{y},\bs{w},\zeta_0)$ is alternating. It follows that there is a
$S(\bs{x},\bs{y},\bs{w},\zeta_0)$, symmetric in the $x_1,\ldots,x_q$, such that $R_a(\bs{x},\bs{y},\bs{w},\zeta_0)$
is equal to
\[
    \frac{ S(\bs{x},\bs{y},\bs{w},\zeta_0)}
    {a
    \prod\limits_{\ell=1}^q \prod\limits_{j=1}^{k-q} (1 - \zeta_0^{a_j} x_\ell^{-a_j/a} y_j)
    \prod\limits_{\ell=1}^q \prod\limits_{j=1}^m (1 - \zeta_0^{b_j} x_\ell^{-b_j/a} w_j)}.
\]
Hence, as long as the $y_j$ are chosen so that $\zeta_0^{a_j} x_\ell^{-a_j/a} y_j \neq 1$,
the singularities at $x_i = x_j$ are removable.

Now that we have determined that the limit $\lim_{\bs{x}\to\Delta_q t} R_a(\bs{x},\bs{y},\bs{w},\zeta_0)$
exists, we will carry out a sequence of parameterizations to compute its value.
Let $\bs{t}=(t_1,\ldots,t_q)$ and $\bs{s}=(s_1,\ldots,s_q)$
and express $x_j = t^{t_j}$, $y_j = t$, and $w_j = t$ to express
$\lim_{\bs{x}\to\Delta_q t} R_a(\bs{x},\bs{y},\bs{w},\zeta_0)$ as
\begin{align*}
    &
    \lim\limits_{\bs{t}\to\Delta_q 1}
    \sum\limits_{i=1}^q \frac{ 1 }{-a
    \prod\limits_{\substack{j=1 \\ j\neq i}}^q (1 - t^{t_j-t_i})
    \prod\limits_{j=1}^{k-q} (1 - \zeta_0^{a_j} t^{(a-t_i a_j)/a})
    \prod\limits_{j=1}^m (1 - \zeta_0^{b_j} t^{(a-t_i b_j)/a})}
    \\&\quad =
    \lim\limits_{\substack{\bs{t}\to\Delta_q 1 \\ \bs{s}\to\Delta_q 1}}
    \sum\limits_{i=1}^q \frac{ 1 }{\frac{-a}{s_i}
    \prod\limits_{\substack{j=1 \\ j\neq i}}^q (1 - t^{(t_j-t_i)/s_j})
    \prod\limits_{j=1}^{k-q} (1 - \zeta_0^{a_j} t^{(a-t_i a_j)/a})
    \prod\limits_{j=1}^m (1 - \zeta_0^{b_j} t^{(a-t_i b_j)/a})}
    \\&\quad =
    \lim\limits_{\bs{t}\to\Delta_q 1}
    \sum\limits_{i=1}^q \frac{ 1 }{\frac{a}{t_i}
    \prod\limits_{\substack{j=1 \\ j\neq i}}^q (1 - t^{(t_j-t_i)/t_j})
    \prod\limits_{j=1}^{k-q} (1 - \zeta_0^{a_j} t^{(a-t_i a_j)/a})
    \prod\limits_{j=1}^m (1 - \zeta_0^{b_j} t^{(a-t_i b_j)/a})},
\end{align*}
where in the last step we set $s_i = t_i$ for each $i$. Doing some elementary
algebra and setting $c_i = a/t_i$ for $i = 1,\ldots,q$, we obtain
\[
    \lim\limits_{\bs{c}\to\Delta_q a}
    \sum\limits_{i=1}^q \frac{ 1 }{c_i
    \prod\limits_{\substack{j=1 \\ j\neq i}}^q (1 - t^{(c_i-c_j)/c_i})
    \prod\limits_{j=1}^{k-q} (1 - \zeta_0^{a_j} t^{(c_i-a_j)/c_i})
    \prod\limits_{j=1}^m (1 - \zeta_0^{b_j} t^{(c_i - b_j)/c_i})}.
\]
Applying this computation to each of the poles and returning to the notation for $\bs{a}$
in the statement of the theorem, we obtain that the sum of the residues of all poles in the
unit circle is given by
\[
    \sum\limits_{i=1}^k \sum\limits_{\zeta^{a_i} = 1}
        \frac{1}{c_i \prod\limits_{\substack{j=1\\j\neq i}}^k (1 - \zeta^{a_j} t^{(c_i-c_j)/c_i})
        \prod\limits_{j=1}^m (1 - \zeta^{b_j}t^{(c_i-b_j)/c_i} )}.
\]
It remains only to show that we may exchange the limit with the integral. But this follows from
a simple application of the Dominated Convergence Theorem, where we note that the integrand is
continuous and hence bounded on the circle.
\end{proof}


\section{An algorithm to compute the Hilbert series in the generic case}
\label{sec:DirtyMethod}

In this section, we outline how Proposition~\ref{prop:HilbSumGeneric} can
be turned into an algorithm to compute the Hilbert series $\Hilb_{\bs{a}}(t)$
associated to a generic weight vector. This algorithm is very similar to that
described in \cite[Section 4]{HerbigSeatonHilbSympCirc}, so we give a brief summary
and refer the reader to that reference for more details.

Recall that for a formal
power series $G(t) = \sum_{m=0}^\infty G_m t^m$ and a positive integer $a$,
the operator $U_a$ is given by
\[
    (U_a G)(t) := G_{(a)}(t) := \sum\limits_{m=0}^\infty G_{ma} t^m.
\]
By \cite[Lemma 4.1]{HerbigSeatonHilbSympCirc}, if $G(t)$ is the power series of a rational
function, then $G_{(a)}(t)$ is as well rational.

Now, start with a generic weight vector $(a_1,\ldots,a_n)$ with $a_i < 0$ for $i \leq k$
and $a_i > 0$ for $i > k$. For $i=1,\ldots,k$, define
\[
    \widetilde{\Phi_i}(t)
    :=
    \frac{1}{\prod\limits_{\substack{j=1\\j\neq i}}^n 1 - t^{(a_i-a_j)/a_i}},
\]
which we note is analytic at $t=0$. Using the expression of $U_a$ in terms of averaging over
roots of unity described in \cite[Section 4]{HerbigSeatonHilbSympCirc},
Equation~\eqref{eq:HilbSumGeneric} can be written as
\[
    \Hilb_{\bs{a}}(t)
    =
    \sum\limits_{i=1}^k \frac{1}{-a_i}
        \sum\limits_{\zeta^{-a_i} = 1} \widetilde{\Phi_i}(\zeta t)
    =
    \sum\limits_{i=1}^k (\widetilde{\Phi_i})_{(-a_i)}(t).
\]
Expressing $\widetilde{\Phi_i}(t)$ as a rational function $P(t)/Q(t)$ where
$P(t)$ is a monomial and $Q(t)$ is a product of terms of the form
$(1 - t^s)$, we replace each $(1 - t^s)$ in the denominator with
\[
    (1 - t^{\lcm{(-a_i,s)}})^{\gcd{(-a_i,s)}}.
\]
This yields the denominator of $(\widetilde{\Phi_i})_{(a_i)}(t)$.
Then we determine the Taylor series of $(\widetilde{\Phi_i})_{(a_i)}(t)$ using
the $U_{(a_i)}$ operator and multiply these out to determine the numerator of
$(\widetilde{\Phi_i})_{(a_i)}(t)$. By Kempf's bound \cite[Theorem 4.3]{KempfHochRob},
$\dim P(t) \leq \dim Q(t)$ so that we need only compute the Taylor series of
$(\widetilde{\Phi_i})_{(a_i)}(t)$ up to degree $\deg(Q)$. Then the Hilbert
series is given by the sum of the $(\widetilde{\Phi_i})_{(a_i)}(t)$ for
$1 \leq i \leq k$. This algorithm has been implemented on \emph{Mathematica}
\cite{Mathematica} and is available from the authors upon request.


\section{Partial Laurent-Schur polynomials}
\label{sec:Schur}

In this section, we introduce a family of polynomials, independently symmetric in two sets of variables,
that will be useful for describing the Laurent coefficients of the Hilbert series of circle invariants
and, in particular, removing singularities in their descriptions.

For a set of $k$ indeterminates $\bs{x} = (x_1,\ldots,x_k)$, we let $\Vand(\bs{x})$ denote the Vandermonde
determinant
\[
    \Vand(\bs{x}) = \prod_{1\leq i < j \leq k} (x_i - x_j).
\]
We now state the following.

\begin{definition}
\label{def:PartialSchur}
Let $\bs{x} = (x_1,\ldots,x_k)$ and $\bs{y} = (y_1,\ldots,y_m)$ be two sets of indeterminates with
$n = k + m$, and let $u \leq n-2$ be an integer. We define the \emph{partial Laurent-Schur polynomial}
$S_u(x_1,\ldots,x_k,y_1,\ldots,y_m) = S_u(\bs{x},\bs{y})$ to be
\begin{equation}
\label{eq:DefS}
    S_u(\bs{x},\bs{y}) =
    \frac{1}{\Vand(\bs{x})\Vand(\bs{y})}
    \begin{vmatrix}
    x_1^u      & \cdots & x_k^u      & 0             & \cdots & 0            \\
    x_1^{n-2}  & \cdots & x_k^{n-2}  & y_{1}^{n-2} & \cdots & y_m^{n-2}    \\
    x_1^{n-3}  & \cdots & x_k^{n-3}  & y_{1}^{n-3} & \cdots & y_m^{n-3}    \\
    \vdots     &        & \vdots     & \vdots        &        & \vdots       \\
    x_1        & \cdots & x_k        & y_{1}       & \cdots & y_m          \\
    1          & \cdots & 1          & 1             & \cdots & 1
    \end{vmatrix}.
\end{equation}
\end{definition}

To indicate the connection with ordinary Schur polynomials, let $\lambda$ be a \emph{partition}
of length $n$, i.e. $\lambda\in\Z^n$ with $\lambda_1\geq\lambda_2\geq\cdots\geq\lambda_n \geq 0$.
Recall that the \emph{alternant} associated to $\lambda$ in the indeterminates $\bs{x} = (x_1,\ldots,x_n)$
is the determinant
\[
    A_\lambda (\bs{x})
    =
    A_{\lambda_1,\ldots,\lambda_n} (x_1,\ldots,x_n)
    =
    \begin{vmatrix}
        x_1^{\lambda_1} &   x_2^{\lambda_1} &   \cdots  &   x_n^{\lambda_1}
        \\
        x_1^{\lambda_2} &   x_2^{\lambda_2} &   \cdots  &   x_n^{\lambda_2}
        \\
        \vdots          &   \vdots          &           &   \vdots
        \\
        x_1^{\lambda_n} &   x_2^{\lambda_n} &   \cdots  &   x_n^{\lambda_n}
        \\
    \end{vmatrix}.
\]
The \emph{Schur polynomial} associated to $\lambda$ is the symmetric polynomial
\[
    s_\lambda( \bs{x} )
        =   \frac{ A_{\lambda + \delta_n} ( \bs{x}) }{\Vand(\bs{x}) }
        =   \frac{ A_{\lambda + \delta_n} ( \bs{x}) }
                {A_{\delta_n} ( \bs{x}) },
\]
where $\delta_n = (n-1,n-2,\ldots,0)$.
The fact that $A_\lambda (\bs{x})$ is obviously alternating in the $x_i$ implies that the polynomial
$A_\lambda ( \bs{x})$ is divisible by $\Vand(\bs{x})$ and hence $s_\lambda( \bs{x} )$
is a polynomial. If we allow the $\lambda_i$ to take negative values, then $\lambda$
is called a \emph{signature}, and $s_\lambda( \bs{x} )$, defined in the same way,
is the \emph{Laurent-Schur polynomial} associated to $\lambda$.
See \cite[I.3]{MacdonaldSymFuncs} or \cite[4.4--6]{SaganSymGrp} for more details.

We now indicate the motivation for Definition~\ref{def:PartialSchur}.
In the computations of the Laurent coefficients of the Hilbert series $\Hilb_{\bs{a}}(t)$
of circle invariants in Section~\ref{sec:Laurent}, we will frequently run into rational
functions of the form
\begin{equation}
\label{eq:PartSchurMotiv}
    \frac{\sum\limits_{i=1}^k (-1)^{i-1}x_i^{u}
        \prod\limits_{\substack{1\leq p < q \leq n\\ p,q\neq i}} (x_p-x_q)}
        {\prod\limits_{1\leq p < q \leq n} (x_p-x_q)}
    =
    \frac{\sum\limits_{i=1}^k (-1)^{i-1}x_i^{u}
        \prod\limits_{\substack{1\leq p < q \leq n\\ p,q\neq i}} (x_p-x_q)}
        {\Vand(x_1,\ldots,x_n)}
\end{equation}
where $u \leq n-2$ is an integer. Using $\widehat{\ast}$ to denote removed columns, we can express
\[
    \sum\limits_{i=1}^k (-1)^{i-1}x_i^{u}
        \prod\limits_{\substack{1\leq p < q \leq n\\ p,q\neq i}} (x_p-x_q)
    =
    \sum\limits_{i=1}^k (-1)^{i-1}x_i^{u}
    \begin{vmatrix}
    x_1^{n-2} & \cdots & \widehat{x_i^{n-2}} & \cdots & x_n^{n-2}  \\
    x_1^{n-3} & \cdots & \widehat{x_i^{n-3}} & \cdots & x_n^{n-3}  \\
    \vdots    &        & \vdots              &        & \vdots     \\
    x_1       & \cdots & \widehat{x_i} & \cdots & x_n        \\
    1         & \cdots & \widehat{1}         & \cdots & 1
    \end{vmatrix}
\]
to see that the numerator of Equation~\eqref{eq:PartSchurMotiv} is the first $k$ terms of the cofactor
expansion of the alternant
$A_{u,n-2,n-3,\ldots,0} (x_1,\ldots,x_n)$ along the first row
which again coincides with the determinant
\begin{equation}
\label{eq:DefS01}
    \begin{vmatrix}
    x_1^u      & \cdots & x_k^u      & 0             & \cdots & 0            \\
    x_1^{n-2}  & \cdots & x_k^{n-2}  & x_{k+1}^{n-2} & \cdots & x_n^{n-2}    \\
    x_1^{n-3}  & \cdots & x_k^{n-3}  & x_{k+1}^{n-3} & \cdots & x_n^{n-3}    \\
    \vdots     &        & \vdots     & \vdots        &        & \vdots       \\
    x_1        & \cdots & x_k        & x_{k+1}       & \cdots & x_n          \\
    1          & \cdots & 1          & 1             & \cdots & 1
    \end{vmatrix}.
\end{equation}
Relabeling variables to account
for the missing symmetries, it is obvious that Equation~\eqref{eq:DefS01} corresponds to the determinant
given in Equation~\eqref{eq:DefS}. While this determinant is not divisible by the full
Vandermonde determinant $\Vand(x_1,\ldots,x_n)$, it is divisible by the partial Vandermonde
determinants in the first $k$ and last $n-k$ variables in Equation~\eqref{eq:DefS}.
The purpose of this section is to give explicit descriptions of the quotient polynomials.
To facilitate this, we introduce the following notation.

For a positive integer $n$, let $\mathbf{n} = \{1,2,\ldots,n\}$ and
$\mathbf{\underline{n}} = \{ 0,1,2,\ldots, n\}$. If
$\lambda$ is a partition with $\lambda_1 \leq n$, we let $\lambda^{c,n}$ denote the complement
of $\lambda$ in $\mathbf{\underline{n}}$, i.e. the partition that contains one instance of each
element of $\mathbf{\underline{n}}$ that does not appear in $\lambda$. If $\lambda$ is a partition
with $\lambda_1 \leq n$ containing the integer $u$, let $\lambda_u^{c,n}$ denote
the partition $\lambda^{c,n}$ with the entry $u$ added. We then have the following.

\begin{theorem}
\label{thrm:PartSchur}
Let $\bs{x} = (x_1,\ldots,x_k)$ and $\bs{y} = (y_1,\ldots,y_m)$ be two sets of indeterminants with
$n = k + m$, and let $u \leq n-2$ be an integer. If $u\geq 0$, then
\begin{equation}
\label{eq:PartSchurPositive}
    S_u(\bs{x},\bs{y})
    =
    (-1)^{k(k+1)/2 + n(k-1) + u}
        \hspace{-.3cm}
        \sum\limits_{\substack{n-2\geq \lambda_1>\cdots>\lambda_k\geq 0\\ \lambda_i = u}}
        \hspace{-.3cm}
        (-1)^{\|\lambda\|+i}
        s_{\lambda-\delta_k} (\bs{x})
        s_{\lambda_u^{c,n-2}-\delta_m} (\bs{y}),
\end{equation}
where the sum is over all partitions $\lambda$ of length $k$ with $\lambda_1 \leq n-2$ such that
$\lambda_i = u$ for some $i$.

If $u < 0$, then
\begin{equation}
\label{eq:PartSchurNegative}
    S_u(\bs{x},\bs{y})
    =
    (-1)^{k(k-1)/2 + n(k-1)}
        \hspace{-.3cm}
        \sum\limits_{n-2\geq \lambda_1>\cdots>\lambda_{k-1}\geq 0}
        \hspace{-.3cm}
        (-1)^{\|\lambda\|}
        s_{(\lambda,u)-\delta_k} (\bs{x})
        s_{\lambda^{c,n-2}-\delta_m} (\bs{y}),
\end{equation}
where the sum is over all partitions of $\lambda$ of length $k-1$ with $\lambda_1 \leq n-2$.

In either case, $S_u(\bs{x},\bs{y})$ is homogeneous of degree $(m-1)(k-1)+u$.
\end{theorem}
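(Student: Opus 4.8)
The plan is to attack the determinant in Equation~\eqref{eq:DefS} by a Laplace expansion along the first $k$ columns — the only columns in which the top entry $x_i^u$ is nonzero — but it is cleaner to instead expand along the \emph{first row}. The first row has entries $x_1^u,\ldots,x_k^u,0,\ldots,0$, so the cofactor expansion along it produces exactly $k$ terms, indexed by $i=1,\ldots,k$; the $i$-th minor is an $(n-1)\times(n-1)$ determinant whose columns are the power-vectors $(\ast^{n-2},\ldots,\ast^{1},1)$ in the variables $x_1,\ldots,\widehat{x_i},\ldots,x_k,y_1,\ldots,y_m$, i.e. a Vandermonde-type alternant $A_{\delta_{n-1}}$ in those $n-1$ variables. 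So up to sign the numerator is $\sum_{i=1}^k (-1)^{i-1} x_i^u \Vand(x_1,\ldots,\widehat{x_i},\ldots,x_k,y_1,\ldots,y_m)$. The goal is to divide this by $\Vand(\bs{x})\Vand(\bs{y})$ and recognize the quotient as the stated signed sum of products of two Schur polynomials.

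The key step is a Cauchy–Binet / Laplace expansion of the full $n\times n$ determinant grouped according to how the rows split between the $\bs{x}$-block of columns (columns $1,\ldots,k$) and the $\bs{y}$-block (columns $k+1,\ldots,n$). Write the row exponents as the sequence $(u,n-2,n-3,\ldots,1,0)$; a choice of which $k$ of these $n$ row-exponents are "assigned" to the $\bs{x}$-columns amounts to choosing a $k$-subset of $\{u,n-2,\ldots,0\}$, and the corresponding complementary $m$-subset is assigned to the $\bs{y}$-columns. Since the top row already forces $u$ into the $\bs{x}$-part (the $\bs{y}$-entries there are $0$), the subsets that contribute in the case $u\ge 0$ are precisely the partitions $\lambda$ of length $k$ with $n-2\ge\lambda_1>\cdots>\lambda_k\ge 0$ that contain $u$ as some part $\lambda_i$, and the $\bs{y}$-part is $\lambda_u^{c,n-2}$ — matching the index set in Equation~\eqref{eq:PartSchurPositive}. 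The Laplace expansion then writes the determinant as $\sum_\lambda \pm A_\lambda(\bs{x})\,A_{\lambda_u^{c,n-2}}(\bs{y})$ with an explicit Laplace sign $(-1)^{\text{(sum of positions)}}$; dividing the two alternants by $\Vand(\bs{x})$ and $\Vand(\bs{y})$ turns $A_\lambda(\bs{x})/\Vand(\bs{x}) = s_{\lambda-\delta_k}(\bs{x})$ and similarly for $\bs{y}$, yielding Equation~\eqref{eq:PartSchurPositive}. The case $u<0$ is identical except that $u$ can never coincide with one of $n-2,\ldots,0$, so the $\bs{x}$-part is forced to be $(\lambda,u)$ for a length-$(k-1)$ partition $\lambda$ built from $\{0,\ldots,n-2\}$, and the $\bs{y}$-part is $\lambda^{c,n-2}$; this gives Equation~\eqref{eq:PartSchurNegative}.

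I expect the main obstacle to be bookkeeping the sign. There are three independent sources: the Laplace expansion sign (which depends on the positions, within the row-exponent list $(u,n-2,\ldots,0)$, of the $k$ exponents routed to the $\bs{x}$-columns), the sign incurred in reordering the chosen exponents into decreasing order so that they literally read as a partition $\lambda$ (which produces the $(-1)^{\|\lambda\|+i}$ type factors), and a global sign from putting the first row in position and from the block structure (this is the source of the prefactors $(-1)^{k(k+1)/2+n(k-1)+u}$ and $(-1)^{k(k-1)/2+n(k-1)}$). The cleanest route is to first prove it for a single convenient $\lambda$ — say the "staircase" term — track the sign there, and then argue that swapping two adjacent parts of $\lambda$ changes the Laplace sign and the reordering sign consistently, so the formula propagates. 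The homogeneity claim is then immediate: each summand $s_{\lambda-\delta_k}(\bs{x})\,s_{\lambda_u^{c,n-2}-\delta_m}(\bs{y})$ has degree $(\|\lambda\|-\binom{k}{2}) + (\|\lambda_u^{c,n-2}\|-\binom{m}{2})$; using $\|\lambda\|+\|\lambda^{c,n-2}\| = \binom{n-1}{2} = \binom{n-1}{2}$ for the $u<0$ case (and the analogous identity with $u$ inserted for $u\ge 0$), this collapses to $(m-1)(k-1)+u$, which one can alternatively just read off directly from Equation~\eqref{eq:DefS} by assigning weight $1$ to each variable and noting the numerator determinant is homogeneous of degree $u+(n-2)+(n-3)+\cdots+1$ while $\Vand(\bs{x})\Vand(\bs{y})$ has degree $\binom{k}{2}+\binom{m}{2}$.
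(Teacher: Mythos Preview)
Your approach is essentially the same as the paper's: a Laplace expansion of the determinant in Equation~\eqref{eq:DefS} along the first $k$ columns, identifying the resulting minors as alternants $A_\lambda(\bs{x})$ and $A_{\lambda_u^{c,n-2}}(\bs{y})$ (respectively $A_{(\lambda,u)}(\bs{x})$ and $A_{\lambda^{c,n-2}}(\bs{y})$ for $u<0$), and then dividing by the two Vandermondes to obtain Schur polynomials. The paper streamlines the $u\ge 0$ case by first row-reducing to the form~\eqref{eq:DefSRowRed} so that the vanishing constraints on $P$ are immediate, and it computes the sign directly via the identity $\|P\| = n(k-1) - \|\lambda\| + u + 1$ rather than your propagate-by-swaps scheme, but these are only cosmetic differences.
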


Note that if $u \geq 0$, then via row reduction, it is obvious that we can express
\begin{equation}
\label{eq:DefSRowRed}
    S_u(\bs{x},\bs{y}) =
    \frac{1}{\Vand(\bs{x})\Vand(\bs{y})}
    \begin{vmatrix}
    x_1^u      & \cdots & x_k^u      & 0             & \cdots & 0            \\
    x_1^{n-2}  & \cdots & x_k^{n-2}  & y_{1}^{n-2} & \cdots & y_m^{n-2}    \\
    \vdots     &        & \vdots     & \vdots        &        & \vdots       \\
    x_1^{u+1}  & \cdots & x_k^{u+1}  & y_{1}^{u+1} & \cdots & y_m^{u+1}    \\
    0          & \cdots & 0          & y_{1}^u     & \cdots & y_m^u        \\
    x_1^{u-1}  & \cdots & x_k^{u-1}  & y_{1}^{u-1} & \cdots & y_m^{u-1}    \\
    \vdots     &        & \vdots     & \vdots        &        & \vdots       \\
    1          & \cdots & 1          & 1             & \cdots & 1
    \end{vmatrix},
    \quad   u \geq 0.
\end{equation}

\begin{proof}[Proof of Theorem~\ref{thrm:PartSchur}]
We first consider the case $0 \leq u \leq n-2$. For $P\subseteq\mathbf{\underline{n}}$,
let $|P|$ denote the cardinality of $P$ and $\|P\|$ the sum of the elements of $P$.
Let
\[
    Z =
    \begin{pmatrix}
    x_1^u     & x_2^u     & \cdots & x_k^u      & 0             & \cdots & 0            \\
    x_1^{n-2} & x_2^{n-2} & \cdots & x_k^{n-2}  & y_{1}^{n-2} & \cdots & y_m^{n-2}    \\
    \vdots    & \vdots    &        & \vdots     & \vdots        &        & \vdots       \\
    x_1^{u+1} & x_2^{u+1} & \cdots & x_k^{u+1}  & y_{1}^{u+1} & \cdots & y_m^{u+1}    \\
    0         & 0         & \cdots & 0          & y_{1}^u     & \cdots & y_m^u        \\
    x_1^{u-1} & x_2^{u-1} & \cdots & x_k^{u-1}  & y_{1}^{u-1} & \cdots & y_m^{u-1}    \\
    \vdots    & \vdots    &        & \vdots     & \vdots        &        & \vdots       \\
    x_1       & x_2       & \cdots & x_k        & y_{1}       & \cdots & y_m          \\
    1         & 1         & \cdots & 1          & 1             & \cdots & 1
    \end{pmatrix}
\]
be the matrix whose determinant appears in Equation~\eqref{eq:DefSRowRed}.
For $P,Q\subseteq\mathbf{n}$ such that $|P|=|Q|$, let $Z^{P,Q}$ denote the cofactor
of $Z$ formed by the determinant of the submatrix with rows in $P$ and columns in $Q$.
Note that $Z^{P,\mathbf{k}} = 0$ for each $P\subseteq \mathbf{n}$ with $|P|=k$ that contains
the element $n-u$, and similarly
$Z^{\mathbf{n}\smallsetminus P,\mathbf{n}\smallsetminus\mathbf{k}} = 0$
for each $P\subseteq \mathbf{n}$ with $|P|=k$ that does not contain $1$.
Hence, the Laplace expansion of the determinant along the first $k$ columns,
see \cite[Theorem 13.8.1]{Harville}, is given by
\[
    S_u(\bs{x},\bs{y})
    =
    \frac{1}{\Vand(\bs{x})\Vand(\bs{y})}
    \sum\limits_{\substack{1\in P\subseteq \mathbf{n}, n-u\not\in P\\|P|=k}}
        (-1)^{\|\mathbf{k}\| + \|P\|}
            Z^{P,\mathbf{k}}
            Z^{\mathbf{n}\smallsetminus P,
                \mathbf{n}\smallsetminus\mathbf{k}}.
\]
To express the minors $Z^{P,\mathbf{k}}$ and
$Z^{\mathbf{n}\smallsetminus P,\mathbf{n}\smallsetminus\mathbf{k}}$ in terms of alternants,
consider a choice of $P$ with elements $1=p_1<p_2<\cdots<p_k$. Then
$Z^{P,\mathbf{k}} = (-1)^{i-1} A_\lambda(\bs{x})$ where $\lambda$ is a partition
of length $k$, $n-2\geq\lambda_1 \geq\cdots\geq\lambda_k\geq 0$, and $\lambda_i = u$.
In particular, $\lambda$ is given by transposing the first entry of
$(u,n-p_2,n-p_3,\ldots,n-p_k)$ so that the result is decreasing, hence
$\|P\| = n(k-1) - \|\lambda\| + u + 1$. Similarly,
$Z^{\mathbf{n}\smallsetminus P,\mathbf{n}\smallsetminus\mathbf{k}} = A_{\lambda_u^{c,n-2}} (\bs{y})$,
where $\lambda_u^{c,n-2}$ is the complement of $\lambda$ in $\mathbf{\underline{n-2}}$
with the element $u$ added. Therefore, we can express
\begin{align*}
    S_u(\bs{x},\bs{y})
    &=
    (-1)^{k(k+1)/2 + n(k-1) + u}\hspace{-.3cm}
    \sum\limits_{\substack{n-2\geq \lambda_1>\cdots>\lambda_k\geq 0\\ \lambda_i = u}}
        \hspace{-.3cm}
        (-1)^{\|\lambda\| + i}
            \frac{A_\lambda (\bs{x})}{\Vand(\bs{x})}
            \frac{A_{\lambda_u^{c,n-2}} (\bs{y})}{\Vand(\bs{y})}
    \\&=
    (-1)^{k(k+1)/2 + n(k-1) + u}\hspace{-.3cm}
    \sum\limits_{\substack{n-2\geq \lambda_1>\cdots>\lambda_k\geq 0\\ \lambda_i = u}}
        \hspace{-.3cm}
        (-1)^{\|\lambda\| + i}
            s_{\lambda-\delta_{k}}(\bs{x})s_{{\lambda_u^{c,n-2}}-\delta_{m}}(\bs{y}),
\end{align*}
yielding Equation~\eqref{eq:PartSchurPositive}.

Now suppose $u < 0$ and set
\[
    Z =
    \begin{pmatrix}
    x_1^u     & x_2^u     & \cdots & x_k^u      & 0             & \cdots & 0            \\
    x_1^{n-2} & x_2^{n-2} & \cdots & x_k^{n-2}  & y_{1}^{n-2} & \cdots & y_m^{n-2}    \\
    \vdots    & \vdots    &        & \vdots     & \vdots        &        & \vdots       \\
    x_1       & x_2       & \cdots & x_k        & y_{1}       & \cdots & y_m          \\
    1         & 1         & \cdots & 1          & 1             & \cdots & 1
    \end{pmatrix}.
\]
We again consider the Laplace expansion of $\det Z$ along the first $k$ columns,
noting that
$Z^{\mathbf{\underline{n}}\smallsetminus P,\mathbf{\underline{n}}\smallsetminus\mathbf{\underline{k}}} = 0$
for each $P\subseteq \mathbf{n}$ with $|P|=k$ that does not contain $1$, and express
\[
    S_u(\bs{x},\bs{y})
    =
    \frac{1}{\Vand(\bs{x})\Vand(\bs{y})}
    \sum\limits_{\substack{1\in P\subseteq \mathbf{n}\\|P|=k}}
        (-1)^{\|\mathbf{k}\| + \|P\|}
            Z^{P,\mathbf{k}}
            Z^{\mathbf{n}\smallsetminus P,
                \mathbf{n}\smallsetminus\mathbf{k}}.
\]
Considering a choice of $P$ with elements $1=p_1<p_2<\cdots<p_k$, in this case, $Z^{P,\mathbf{k}} = (-1)^{k-1}A_{(\lambda,u)}(\bs{x})$
where $\lambda = (n-p_2,\ldots,n-p_k)$, and
$\|P\| = n(k-1) - \|\lambda\| + 1$. Thus
\begin{align*}
    S_u(\bs{x},\bs{y})
    &=
    (-1)^{k(k+1)/2 + n(k-1)-k}
    \sum\limits_{n-2\geq \lambda_1>\cdots>\lambda_{k-1}\geq 0}
        (-1)^{\|\lambda\|}
        \frac{A_{(\lambda,u)}(\bs{x})}{\Vand(\bs{x})}
        \frac{A_{\lambda^{c,n-2}}(\bs{y})}{\Vand(\bs{y})}
    \\&=
    (-1)^{k(k-1)/2 + n(k-1)}
    \sum\limits_{n-2\geq \lambda_1>\cdots>\lambda_{k-1}\geq 0}
        (-1)^{\|\lambda\|}
        s_{(\lambda,u)-\delta_k}(\bs{x})
        s_{\lambda^{c,n-2} - \delta_m}(\bs{y}),
\end{align*}
yielding Equation~\eqref{eq:PartSchurNegative}. That $S_u(\bs{x},\bs{y})$ is
homogeneous of degree
\[
(n-2)(n-1)+u-(k-1)k/2-(m-1)m/2 = (m-1)(k-1)+u
\]
is easily observed from Equation~\eqref{eq:DefS},
completing the proof.
\end{proof}

Recall that the Schur polynomial $s_\lambda(\bs{x})$ associated to a partition $\lambda$ of length $n$
in the indeterminates $\bs{x} = (x_1,\ldots,x_n)$ can be represented as
\begin{equation}
\label{eq:SchurTableaux}
    s_\lambda(\bs{x})
    =
    \sum\limits_T \bs{x}^T,
\end{equation}
where the sum ranges over all semistandard Young tableaux $T$ of shape $\lambda$.
We include a similar combinatorial interpretation of Theorem~\ref{thrm:PartSchur}.
First, we consider the case $0\le u\le n-2$.

\begin{corollary}
\label{cor:PartSchurTableaux}
If $0\le u\le n-2$, then
\[
    S_u(\bs{x},\bs{y})
    =
    (-1)^{k(k+1)/2 + n(k-1) + u}
    \sum\limits_T (-1)^{\|\lambda\|+i} (\bs{x},\bs{y})^T,
\]
where the sum is over all tableaux $T$ formed as follows:
\begin{enumerate}
\item   Start with a Young diagram of shape
        $(n-2,n-3,\ldots,u+1,u,u,u-1,\ldots,1,0)$, i.e. $n$ rows consisting of
        each integer from $0$ to $n-2$ with $u$ appearing twice.
\item   Label $k$ of the rows $x$ and the remaining $m=n-k$ rows $y$, where the upper
        $u$ row must be labeled $x$ while the lower $u$ row is labeled $y$.
        Note that $\lambda$ corresponds to the shape of the diagram consisting of the rows labeled $x$
        and $\lambda_u^{c,n-2}$ is the shape of the diagram of rows labeled $y$.
        Define $i$ by letting the $u$ row labeled $x$ appear as the $i$th row labeled $x$ from the top.
\item   Starting from the bottom row of length $0$, for each $j$, delete $j-1$ boxes
        from the $j$th row labeled $x$ and delete $j-1$ boxes from the
        $j$th row labeled $y$.
\item   Fill in the rows labeled $x$ with $x_1,\ldots,x_k$ so that the rows labeled $x$
        form a semistandard Young tableau of shape $\lambda-\delta_k$, and similarly fill in the rows labeled $y$ with
        $y_1,\ldots,y_m$ so that the rows labeled $y$ form a semistandard Young tableau of shape
        $\lambda_u^{c,n-2}-\delta_m$. As usual, $(\bs{x},\bs{y})^T$ denotes the product of the entries of $T$.
\end{enumerate}
\end{corollary}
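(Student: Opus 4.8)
The plan is to derive the corollary directly from Equation~\eqref{eq:PartSchurPositive} of Theorem~\ref{thrm:PartSchur} by expanding each of the two Schur polynomials occurring there via the semistandard Young tableau formula~\eqref{eq:SchurTableaux}. Fix an admissible index, that is, a strictly decreasing sequence $n-2\ge\lambda_1>\cdots>\lambda_k\ge 0$ with $\lambda_i=u$ for the (necessarily unique) $i$. Then both $\lambda-\delta_k$ and $\lambda_u^{c,n-2}-\delta_m$ are genuine partitions: $\lambda_u^{c,n-2}$ is again a strictly decreasing sequence of distinct integers in $\{0,\ldots,n-2\}$, so subtracting the staircase makes the difference weakly decreasing, and the last entry stays nonnegative. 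Hence $s_{\lambda-\delta_k}(\bs{x})=\sum_{T_x}\bs{x}^{T_x}$ and $s_{\lambda_u^{c,n-2}-\delta_m}(\bs{y})=\sum_{T_y}\bs{y}^{T_y}$, the sums running over semistandard tableaux of the indicated shapes, and the product $s_{\lambda-\delta_k}(\bs{x})\,s_{\lambda_u^{c,n-2}-\delta_m}(\bs{y})$ becomes the sum of $(\bs{x},\bs{y})^{(T_x,T_y)}$ over all such pairs $(T_x,T_y)$.

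Next I would identify the index data $(\lambda,T_x,T_y)$ with the tableaux $T$ of the corollary. The key observation is purely combinatorial: $\{\lambda_1,\ldots,\lambda_k\}$ and $\mathbf{\underline{n-2}}\smallsetminus\{\lambda_1,\ldots,\lambda_k\}$ partition $\{0,1,\ldots,n-2\}$, and $\lambda_u^{c,n-2}$ is obtained from the latter by re-inserting $u$; therefore the multiset of row lengths of $\lambda$ together with those of $\lambda_u^{c,n-2}$ is exactly $\{0,1,\ldots,n-2\}$ with $u$ repeated, i.e. the fixed shape in step (1). So specifying $\lambda$ is the same as choosing which $k$ rows of that shape are labeled $x$, subject to the normalization that one $u$-row is labeled $x$ and the other $y$ (labeling both $u$-rows the same color would force a repeated part, hence fall outside the index set of~\eqref{eq:PartSchurPositive}, equivalently would contribute a vanishing alternant). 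Since the rows of a Young diagram decrease in length from the top, the $i$-th $x$-labeled row from the top has length $\lambda_i$, so the recipe in step (2) that defines $i$ as the position of the $x$-labeled $u$-row among the $x$-rows reproduces precisely the index $i$ of~\eqref{eq:PartSchurPositive}. Deleting $j-1$ boxes from the $j$-th $x$-row counted from the bottom replaces the $x$-shape $(\lambda_1,\ldots,\lambda_k)$ by $(\lambda_1-(k-1),\ldots,\lambda_k-0)=\lambda-\delta_k$, and likewise the $y$-rows become $\lambda_u^{c,n-2}-\delta_m$; filling these two sub-diagrams with semistandard tableaux in $x_1,\ldots,x_k$ and $y_1,\ldots,y_m$ recovers exactly the pairs $(T_x,T_y)$, with $(\bs{x},\bs{y})^T$ the corresponding product of monomials.

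It then remains to track signs. The global prefactor $(-1)^{k(k+1)/2+n(k-1)+u}$ is unchanged, and the per-term sign $(-1)^{\|\lambda\|+i}$ in~\eqref{eq:PartSchurPositive} depends only on the labeling data of $T$, through $\lambda$ and $i$, and not on the semistandard fillings, so it attaches cleanly to each $T$. Summing over all $T$ produced by steps (1)--(4) gives the stated identity.

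I expect no genuine difficulty here beyond careful bookkeeping: the substantive content is Theorem~\ref{thrm:PartSchur} together with the classical tableau expansion~\eqref{eq:SchurTableaux}. The points that need attentive checking are that the bottom-up deletion convention in step (3) really matches subtraction of the staircases $\delta_k$ and $\delta_m$ (rather than their reverses), and that the "upper $u$-row is $x$, lower $u$-row is $y$" normalization in step (2) selects exactly one representative for each admissible $\lambda$ with no over- or under-counting; everything else is a routine unwinding of the definitions.
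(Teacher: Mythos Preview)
Your proposal is correct and follows exactly the paper's approach: both derive the corollary by inserting the tableau expansion~\eqref{eq:SchurTableaux} into each Schur factor of Equation~\eqref{eq:PartSchurPositive} and then identifying pairs $(T_x,T_y)$ with the labeled tableaux $T$ of the statement. Your write-up is in fact considerably more detailed than the paper's, which records only the multiplicativity $s_{\lambda-\delta_k}(\bs{x})\,s_{\lambda_u^{c,n-2}-\delta_m}(\bs{y})=\sum_{S,T}\bs{x}^S\bs{y}^T$ and leaves the bijection and sign-tracking implicit.
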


\begin{proof} This is an immediate consequence of interpreting Equation~\eqref{eq:PartSchurPositive} in the
context of Equation~\eqref{eq:SchurTableaux}. Note in particular, that we have multiplicativity
\[
     s_{\lambda-\delta_k} (\bs{x}) s_{\lambda_u^{c,n-2}-\delta_m} (\bs{y})
    =
    \left( \sum\limits_S \bs{x}^S \right) \left( \sum\limits_T \bs{y}^T \right)
    =
    \sum\limits_{S,T} \bs{x}^S \bs{y}^T,
\]
where $S$ ranges over all tableaux of shape $\lambda-\delta_k$ and $T$ over all tableaux of shape $\lambda_u^{c,n-2}-\delta_m$.
\end{proof}

The case $u<0$ can be described by a variation of this construction.

\begin{corollary}
\label{cor:PartSchurTableauxnegative}
If $u<0$, then
\[
    S_u(\bs{x},\bs{y})
    =
    (-1)^{k(k-1)/2 + n(k-1)} \prod_{i=1}^k x_i^u \cdot
    \sum\limits_T (-1)^{\|\lambda\|} (\bs{x},\bs{y})^T,
\]
where the sum is over all tableaux $T$ formed as follows:
\begin{enumerate}
\item   Start with a Young diagram of shape
        $(n-2,n-3,\ldots,1,0)$, i.e. $n-1$ rows consisting of each integer from $0$ to $n-2$.
\item   Label $k-1$ of the rows $x$ and the remaining $m=n-k$ rows $y$.
        Note that $\lambda$ is the shape of the Young diagram labeled $x$ and $\lambda^{c,n-2}$
        is the shape of the Young diagram labeled $y$.
\item   Add $-u-1$ boxes to each row labeled $x$. Then, starting from the bottom row of length $0$, for each $j$,
        delete $j-1$ boxes from the $j$th row labeled $x$ and delete $j-1$ boxes from the
        $j$th row labeled $y$.
\item   Fill in the rows labeled $x$ with $x_1,\ldots,x_k$ so that the rows labeled $x$
        form a semistandard Young tableau of shape $\lambda-\delta_{k-1}-u-1$, and similarly fill in the rows
        labeled $y$ with $y_1,\ldots,y_m$ so that the rows labeled $y$ form a semistandard Young tableau of
        shape $\lambda^{c,n-2}-\delta_m$. As usual, $(\bs{x},\bs{y})^T$ denotes the product of the entries of $T$.
\end{enumerate}
\end{corollary}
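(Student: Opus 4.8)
The plan is to deduce this from Equation~\eqref{eq:PartSchurNegative} of Theorem~\ref{thrm:PartSchur} in exactly the way Corollary~\ref{cor:PartSchurTableaux} follows from Equation~\eqref{eq:PartSchurPositive}; the only new ingredient is that the factor $s_{(\lambda,u)-\delta_k}(\bs{x})$ appearing there is a Laurent-Schur polynomial, which must first be rewritten as $\prod_{i=1}^k x_i^u$ times an honest Schur polynomial. First I would record the elementary shift identity $s_{\mu + (c,\dots,c)}(\bs{x}) = (x_1\cdots x_k)^c\, s_\mu(\bs{x})$, valid for any signature $\mu$ of length $k$ and any $c\in\Z$ (the tuple $(c,\dots,c)$ having length $k$): pulling $x_j^c$ out of the $j$th column of the alternant $A_{\mu+\delta_k+(c,\dots,c)}(\bs{x})$ shows that $A_{\mu+\delta_k+(c,\dots,c)}(\bs{x}) = (x_1\cdots x_k)^c A_{\mu+\delta_k}(\bs{x})$, and dividing by $\Vand(\bs{x})$ gives the claim.

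I would then apply this with $\mu = (\lambda,u)-\delta_k-(u,\dots,u)$ and $c = u$. Writing out coordinates, $(\lambda,u)-\delta_k = (\lambda_1-(k-1),\ldots,\lambda_{k-1}-1,\,u)$, so $(\lambda,u)-\delta_k-(u,\dots,u)$ has $i$th entry $\lambda_i-(k-i)-u$ for $i\le k-1$ and $k$th entry $0$; since $\lambda_1>\cdots>\lambda_{k-1}\ge 0$ and $u\le -1$, this is a genuine partition, and dropping the trailing zero identifies it with $\lambda-\delta_{k-1}-(u+1)$. Hence $s_{(\lambda,u)-\delta_k}(\bs{x}) = \prod_{i=1}^k x_i^u\cdot s_{\lambda-\delta_{k-1}-(u+1)}(\bs{x})$. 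Substituting this into Equation~\eqref{eq:PartSchurNegative} pulls $\prod_{i=1}^k x_i^u$ out of the entire sum and replaces the Laurent-Schur factor by $s_{\lambda-\delta_{k-1}-(u+1)}(\bs{x})$, leaving the sign $(-1)^{k(k-1)/2+n(k-1)}$ and the inner signs $(-1)^{\|\lambda\|}$ unchanged.

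Finally, expanding $s_{\lambda-\delta_{k-1}-(u+1)}(\bs{x}) = \sum_S\bs{x}^S$ and $s_{\lambda^{c,n-2}-\delta_m}(\bs{y}) = \sum_T\bs{y}^T$ over semistandard tableaux via Equation~\eqref{eq:SchurTableaux} and multiplying out as in the proof of Corollary~\ref{cor:PartSchurTableaux}, it remains only to recognize that the recipe (1)--(4) enumerates the data $(\lambda,S,T)$: choosing which $k-1$ of the $n-1$ rows of the staircase $(n-2,\ldots,1,0)$ are labeled $x$ determines $\lambda$ (hence $\|\lambda\|$) as the shape of the $x$-rows and $\lambda^{c,n-2}$ as the shape of the $y$-rows; adding $-u-1$ boxes to every $x$-row and then deleting the staircases $\delta_{k-1}$ and $\delta_m$ from the $x$-rows and $y$-rows respectively produces exactly the shapes $\lambda-\delta_{k-1}-(u+1)$ and $\lambda^{c,n-2}-\delta_m$, as a short computation of the $i$th row length confirms; and filling these in semistandardly runs over all $S$ and all $T$. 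I do not expect a genuine obstacle beyond this bookkeeping; the one point to watch is the verification that $(\lambda,u)-\delta_k-(u,\dots,u)$ is a partition agreeing with $\lambda-\delta_{k-1}-(u+1)$ up to trailing zeros, for which the strict inequalities in $\lambda$ and the hypothesis $u<0$ are exactly what is needed.
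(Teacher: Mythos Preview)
Your proposal is correct and follows essentially the same approach as the paper: both arguments reduce to the proof of Corollary~\ref{cor:PartSchurTableaux} after invoking the shift identity for alternants to write $s_{(\lambda,u)-\delta_k}(\bs{x}) = \prod_{i=1}^k x_i^u \cdot s_{(\lambda-u,0)-\delta_k}(\bs{x})$, which is precisely your $\prod_{i=1}^k x_i^u \cdot s_{\lambda-\delta_{k-1}-(u+1)}(\bs{x})$ with a trailing zero appended. You supply somewhat more detail than the paper in verifying that the shifted signature is a genuine partition and in matching it to the shape prescribed by steps~(1)--(4), but the underlying argument is the same.
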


\begin{proof} This is proven similarly to Corollary~\ref{cor:PartSchurTableaux}. Note in particular that
for $\lambda =(\lambda_1, \ldots,\lambda_n)$ and $\lambda-u =(\lambda_1-u, \ldots,\lambda_n-u)$,
we have a shifting rule for alternants
\[
    A_\lambda (\bs{x})
    =
    \begin{vmatrix}
        x_1^{\lambda_1} &   x_2^{\lambda_1} &   \cdots  &   x_n^{\lambda_1}
        \\
        x_1^{\lambda_2} &   x_2^{\lambda_2} &   \cdots  &   x_n^{\lambda_2}
        \\
        \vdots          &   \vdots          &           &   \vdots
        \\
        x_1^{\lambda_n} &   x_2^{\lambda_n} &   \cdots  &   x_n^{\lambda_n}
        \\
    \end{vmatrix}
    = \prod_{i=1}^n x_i^u \cdot
    \begin{vmatrix}
        x_1^{\lambda_1-u} &   x_2^{\lambda_1-u} &   \cdots  &   x_n^{\lambda_1-u}
        \\
        x_1^{\lambda_2-u} &   x_2^{\lambda_2-u} &   \cdots  &   x_n^{\lambda_2-u}
        \\
        \vdots          &   \vdots          &           &   \vdots
        \\
        x_1^{\lambda_n-u} &   x_2^{\lambda_n-u} &   \cdots  &   x_n^{\lambda_n-u}
        \\
    \end{vmatrix}
        = \prod_{i=1}^n x_i^u \cdot A_{\lambda-u} (\bs{x}).
\]
As a consequence,
\[
     s_{(\lambda,u)-\delta_k}(\bs{x}) = \prod_{i=1}^k x_i^u \cdot s_{(\lambda-u,0)-\delta_k}(\bs{x})
\]
allows us to interpret generalized Laurent-Schur polynomials in the context of classical Schur polynomials.
\end{proof}


\section{The coefficients of the Laurent series}
\label{sec:Laurent}

Let $\bs{a} = (a_1,\ldots,a_n)$ be the weight vector for an effective, stable action of $\C^\times$
on $\C^n$, where $a_i < 0$ for $i \leq k$ and $a_i > 0$ for $i > k$.
In this section, we detail a computation of the first few coefficients $\gamma_m(\bs{a})$ of
the Laurent series of $\Hilb_{\bs{a}}(t)$ at $t = 1$.

Using Equation~\eqref{eq:HilbSumDegen2Unif}, we consider the Laurent coefficients of the expression
\begin{equation}
\label{eq:Hac}
    H_{\bs{a},\bs{c}}(t) =
    \sum\limits_{i=1}^k \sum\limits_{\zeta^{-a_i} = 1}
        \frac{1}{-c_i \prod\limits_{\substack{j=1\\j\neq i}}^n 1 - \zeta^{a_j} t^{(c_i-c_j)/c_i}},
\end{equation}
where $\bs{c}=(c_1,\ldots,c_n)\in \R^n$ with $c_i < 0$ for $i \leq k$ and $c_i > 0$ for $i > k$.
As we will see, the resulting expressions for the $\gamma_m(H_{\bs{a},\bs{c}}(t))$ will be continuous
functions of the $c_i$ with poles occurring only at $c_i = c_j$ where $i \leq k$ and $j > k$ and hence not
in the domain under consideration. Noting that $H_{\bs{a},\bs{c}}(t)$ has a pole at $t = 1$
of order $n - 1$ (occurring in each term with $\zeta=1$), we can express
\[
    \gamma_m(H_{\bs{a},\bs{c}}(t))
    =
    \frac{1}{2\pi\sqrt{-1}}\int\limits_C H_{\bs{a},\bs{c}}(t) (t - 1)^{m-n} \, dt
\]
where $C$ is a positively oriented curve about $1$. Hence, as $H_{\bs{a},\bs{c}}(t)$
and the expressions we will derive for $\gamma_m(H_{\bs{a},\bs{c}}(t))$ are analytic
(up to removable singularities) in a punctured neighborhood of $t = 1$, an application of the
Dominated Convergence Theorem implies that
\[
    \lim_{\bs{c}\to\bs{a}} \gamma_m(H_{\bs{a},\bs{c}}(t)) = \gamma_m(\bs{a}).
\]
See \cite[Secion 5.2]{HerbigSeatonHilbSympCirc} for more details in the case of cotangent-lifted
representations; the argument applies without change to our setting.

\begin{remark}
\label{rem:kNotAbuse}
Our expressions for the $\gamma_m(\bs{a})$ will frequently involve the partial Schur
polynomials $S_u(\bs{a})$. We will always understand this notation to mean that
the weight vector $\bs{a}$ is split into two different sets of indeterminates, the
negative weights in the first set and the positive weights in the second. This slight
abuse of notation will be particularly convenient when we consider polynomials of the
form $S_u(\bs{a}_j)$, where $\bs{a}_j$ denotes the weight vector $\bs{a}$ with the $j$th
entry removed, as it will allow us to avoid using separate notation for the cases when
$j \leq k$ and $j > k$.
\end{remark}

\begin{theorem}
\label{thrm:Gamma0}
Let $\bs{a} = (a_1,\ldots,a_n)$ be the weight vector for an effective action of $\C^\times$ on $\C^n$.
Assume that $a_i < 0$ for $i \leq k$ and $a_i > 0$ for $i > k$, and moreover that
$\bs{a}$ is stable ($1 < k < n$). Then $\dim(\C[\C^n]^{\C_{\bs{a}}^\times}) = n - 1$, and
\begin{equation}
\label{eq:Gamma0}
    \gamma_0(\bs{a})
    =
    \frac{ - S_{n-2}(\bs{a})}
    {\prod\limits_{p=1}^k\prod\limits_{q=k+1}^n (a_p - a_q)}.
\end{equation}
In particular, $\gamma_0(\bs{a}) \neq 0$. When $\bs{a}$ is generic, this can be expressed as
\begin{equation}
\label{eq:Gamma0Generic}
    \gamma_0(\bs{a})
    =
    \sum\limits_{i=1}^k \frac{-a_i^{n-2} }
        {\prod\limits_{\substack{j=1\\j\neq i}}^n (a_i - a_j)}.
\end{equation}
\end{theorem}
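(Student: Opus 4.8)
The plan is to compute $\gamma_0(\bs{a})$ directly from the expression $H_{\bs{a},\bs{c}}(t)$ in Equation~\eqref{eq:Hac}, isolating the leading term of its Laurent expansion at $t=1$ and then taking the limit $\bs{c}\to\bs{a}$. First I would note that the pole of $H_{\bs{a},\bs{c}}(t)$ at $t=1$ has order $n-1$ and comes entirely from the terms with $\zeta = 1$; every term with $\zeta \neq 1$ is analytic at $t=1$ since some factor $1 - \zeta^{a_j}t^{(c_i-c_j)/c_i}$ is nonzero there. So $\gamma_0$ is extracted from $\sum_{i=1}^k \frac{1}{-c_i\prod_{j\neq i}(1 - t^{(c_i-c_j)/c_i})}$. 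Writing $t = 1 - (1-t)$ and using $1 - t^{\alpha} = \alpha(1-t) + O((1-t)^2)$ with $\alpha = (c_i - c_j)/c_i$, the product over the $n-1$ indices $j \neq i$ contributes $(1-t)^{n-1}\prod_{j\neq i}\frac{c_i - c_j}{c_i}$ to lowest order. Hence
\[
    \gamma_0(H_{\bs{a},\bs{c}})
    =
    \sum_{i=1}^k \frac{-1}{c_i \prod_{j\neq i}\frac{c_i-c_j}{c_i}}
    =
    \sum_{i=1}^k \frac{-c_i^{n-2}}{\prod_{j\neq i}(c_i - c_j)},
\]
where $\dim = n-1$ gives the exponent matching $(1-t)^{m-\dim R}$ with $m=0$. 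This establishes $\dim(\C[\C^n]^{\C_{\bs{a}}^\times}) = n-1$ as well, since the order of the pole at $t=1$ is the Krull dimension.

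Next, taking $\bs{c}\to\bs{a}$ in the generic case immediately yields Equation~\eqref{eq:Gamma0Generic}. For the general (possibly degenerate) case, I would recognize the rational function $\sum_{i=1}^k \frac{-c_i^{n-2}}{\prod_{j\neq i}(c_i-c_j)}$ as exactly the shape treated in Section~\ref{sec:Schur}: it is of the form in Equation~\eqref{eq:PartSchurMotiv} with $u = n-2$ and the $x$-variables being $c_1,\ldots,c_n$. More precisely, after clearing the common denominator $\Vand(c_1,\ldots,c_n)$ and tracking signs, the numerator is the first-$k$-terms cofactor expansion of the alternant $A_{n-2,n-2,n-3,\ldots,0}$, which is precisely the determinant defining $S_{n-2}$. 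Splitting the variables into the negative weights $a_1,\ldots,a_k$ and positive weights $a_{k+1},\ldots,a_n$ as in Remark~\ref{rem:kNotAbuse}, and using that $\Vand(c_1,\ldots,c_n) = \Vand(\bs{c}_{\text{neg}})\Vand(\bs{c}_{\text{pos}})\prod_{p\le k<q}(c_p - c_q)$ up to sign, the whole expression becomes $-S_{n-2}(\bs{c})/\prod_{p=1}^k\prod_{q=k+1}^n(c_p - c_q)$. Since $S_{n-2}(\bs{x},\bs{y})$ is a polynomial (Theorem~\ref{thrm:PartSchur}) and the denominator $\prod(a_p - a_q)$ is nonzero when $\bs{c} = \bs{a}$ (negative times positive weights can never coincide), the limit $\bs{c}\to\bs{a}$ is simply evaluation, giving Equation~\eqref{eq:Gamma0}.

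The main obstacle I anticipate is bookkeeping: matching the sign conventions and the normalization constant $(-1)^{k(k+1)/2 + n(k-1) + u}$ appearing in Theorem~\ref{thrm:PartSchur} against the raw sum $\sum_i \frac{-c_i^{n-2}}{\prod_{j\neq i}(c_i-c_j)}$, and carefully reordering the Vandermonde factors so that the partial Vandermondes $\Vand(\bs{x})$, $\Vand(\bs{y})$ and the cross-terms $\prod_{p\le k<q}(c_p-c_q)$ are correctly separated. A cleaner route, avoiding the alternant identity entirely, is to argue that both sides of Equation~\eqref{eq:Gamma0} are rational functions of $\bs{a}$ agreeing on the (Zariski-dense) generic locus via Equation~\eqref{eq:Gamma0Generic} and Definition~\ref{def:PartialSchur}, hence agree identically; one then invokes continuity (the Dominated Convergence argument already set up before the theorem) to pass from generic $\bs{a}$ to arbitrary stable $\bs{a}$. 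Finally, $\gamma_0(\bs{a}) \neq 0$ follows because $\gamma_0$ of any graded algebra of Krull dimension $d$ with the stated pole order is strictly positive — indeed it equals the multiplicity $e(R)/d!$ type leading term — or directly because $S_{n-2}(\bs{a})$, being (up to sign) the leading Laurent coefficient of a Hilbert series with nonnegative coefficients and a genuine pole of order $n-1$, cannot vanish.
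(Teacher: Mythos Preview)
Your approach is essentially the same as the paper's: isolate the $\zeta = 1$ terms of $H_{\bs{a},\bs{c}}(t)$, extract the leading Laurent coefficient via $1/(1-t^c) = \frac{1}{c}(1-t)^{-1} + \cdots$, and recognize the resulting sum as a partial Schur polynomial over the cross-term product $\prod_{p\le k<q}(c_p-c_q)$. Two points deserve correction.

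First, a minor one: the terms with $\zeta \neq 1$ are not analytic at $t=1$; they can have poles of order up to $n-2$ (indeed this is exactly what feeds into the computation of $\gamma_1$ in Theorem~\ref{thrm:Gamma1}). Your justification ``some factor is nonzero there'' only yields pole order $\le n-2$, which is all you need for $\gamma_0$, but ``analytic'' is the wrong word.

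Second, and more substantively, your argument for $\dim = n-1$ and $\gamma_0(\bs{a}) \neq 0$ is circular. You have computed that the coefficient of $(1-t)^{-(n-1)}$ in the Laurent expansion is $\sum_i \frac{-c_i^{n-2}}{\prod_{j\neq i}(c_i-c_j)}$, but this only shows the pole order is \emph{at most} $n-1$; to conclude it is exactly $n-1$ (hence $\dim = n-1$) you need this coefficient to be nonzero, which is precisely the claim. Your final paragraph appeals to ``a genuine pole of order $n-1$'' and to the Krull dimension being $n-1$, but neither has been established independently. The paper breaks the circularity by citing an external fact: stability of the representation implies $\dim(\C^n \git \C_{\bs{a}}^\times) = n-1$ by \cite[Remark~2]{WehlauPopov}, and then the general identity ``pole order of $\Hilb_R$ at $t=1$ equals $\dim R$'' forces $\gamma_0(\bs{a}) \neq 0$.
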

\begin{proof}
Examining Equation~\eqref{eq:Hac}, we see that each term of $H_{\bs{a},\bs{c}}(t)$ has a pole order
of at most $n-1$, with this maximum obtained at the terms with $\zeta = 1$. Therefore,
$\gamma_0(H_{\bs{a},\bs{c}}(t))$ is the degree $1-n$ term in the Laurent series of
\begin{equation}
\label{eq:Zeta1Terms}
    \sum\limits_{i=1}^k
        \frac{1}{-c_i \prod\limits_{\substack{j=1\\j\neq i}}^n 1 - t^{(c_i-c_j)/c_i}}
    =
    \sum\limits_{i=1}^k \frac{1}{-c_i} \prod\limits_{\substack{j=1\\j\neq i}}^n
        \frac{1}{1 - t^{(c_i-c_j)/c_i}}.
\end{equation}
Using the fact that the Laurent series of $1/(1-t^c)$ at $t=1$ begins
\begin{equation}
\label{eq:LaurentGeneral}
    \frac{1}{1 - t^c}
    =
    \frac{1}{c}(1 - t)^{-1} + \frac{c - 1}{2c}
    + \frac{c^2-1}{12c}(1-t) + \frac{c^2-1}{24c}(1-t)^2 + \mathcal{O}\big((1-t)^3\big),
\end{equation}
we have that
\[
    \gamma_0(H_{\bs{a},\bs{c}}(t))
    =
    \sum\limits_{i=1}^k
        \frac{-c_i^{n-2} }
        {\prod\limits_{\substack{j=1\\j\neq i}}^n (c_i - c_j)},
\]
from which Equation~\eqref{eq:Gamma0Generic} follows. To express this as a single
rational function, we simplify
\begin{align}
    \label{eq:CombinRational}
    \gamma_0(H_{\bs{a},\bs{c}}(t))
    &=
    \sum\limits_{i=1}^k
        \frac{(-1)^i c_i^{n-2} }
        {\prod\limits_{j=1}^{i-1} (c_j - c_i)
        \prod\limits_{j=i+1}^n (c_i - c_j)}
    \\ \nonumber &=
    \frac{ \sum\limits_{i=1}^k (-1)^i c_i^{n-2}
        \prod\limits_{\substack{1 \leq p < q \leq n\\ p, q\neq i}} (c_p - c_q)}
        {\prod\limits_{1 \leq p < q \leq n} (c_p - c_q)},
\end{align}
where we recognize the numerator as a cofactor expansion of the form described in
Equation~\eqref{eq:PartSchurMotiv}. Therefore,
\[
    \gamma_0(H_{\bs{a},\bs{c}}(t))
    =
    \frac{ - S_{n-2}(\bs{c})}
    {\prod\limits_{p=1}^k\prod\limits_{q=k+1}^n (c_p - c_q)},
\]
where we note that the singularities at $c_i = c_j$ for $i,j\leq k$ or $i,j > k$ have
been removed. Taking the limit as $\bs{c}\to\bs{a}$ completes the proof
of Equation~\eqref{eq:Gamma0}.

By \cite[Remark 2]{WehlauPopov}, the assumption of stability implies that
$\dim (\C^n\git\C_{\bs{a}}^\times) = n - 1$. As the pole order of $\Hilb_{\bs{a}}(t)$
at $t = 1$ is equal to $\dim (\C^n\git\C_{\bs{a}}^\times)$ by \cite[Lemma 1.4.6]{DerskenKemperBook},
it follows that $\gamma_0(\bs{a}) \neq 0$.
\end{proof}

Before stating the next result, we introduce some additional notation.
For each $j$, let $\bs{a}_j \in \Z^{n-1}$ denote the weight vector $\bs{a}$ with $a_j$ removed
and let $g_j := \gcd\bs{a}_j = \gcd\{ a_i : i\neq j \}$.
For indeterminates $\bs{x} = (x_1,\ldots,x_n)$ and $1\leq j \leq n$, we let
\[
    E_j(\bs{x}) = \sum\limits_{1\leq i_1\leq\cdots\leq i_j\leq n} x_{i_1}\cdots x_{i_j}
\]
denote the elementary symmetric polynomial of degree $j$.

\begin{theorem}
\label{thrm:Gamma1}
Let $\bs{a} = (a_1,\ldots,a_n)$ be the weight vector for an effective action of $\C^\times$ on $\C^n$.
Assume that $a_i < 0$ for $i \leq k$ and $a_i > 0$ for $i > k$, and moreover that
$\bs{a}$ is stable ($1 < k < n$). Then
\begin{equation}
\label{eq:Gamma1}
    \gamma_1(\bs{a})
    =
    \frac{E_1(\bs{a}) S_{n-3}(\bs{a}) - S_{n-2}(\bs{a})}
        {2 \prod\limits_{p=1}^k\prod\limits_{q=k+1}^n (a_p - a_q) }
        +
        \sum\limits_{j=1}^n \left( \frac{g_j - 1}{2} \right) \gamma_0(\bs{a}_j).
\end{equation}
When $\bs{a}$ is generic, this can be expressed as
\begin{equation}
\label{eq:Gamma1Generic}
    \gamma_1(\bs{a})
    =
    \sum\limits_{i=1}^k \sum\limits_{\substack{j=1\\ j\neq i}}^n
        \frac{a_i^{n-3} a_j}
        {2\prod\limits_{\substack{\ell=1\\ \ell \neq i}}^n (a_i - a_\ell)}
        +
        \sum\limits_{i=1}^k \sum\limits_{\substack{j=1\\ j\neq i}}^n
        \left( \frac{g_j - 1}{2} \right)
        \frac{- a_i^{n-3}}{\prod\limits_{\substack{\ell=1 \\ \ell\neq i,j}}^n (a_i - a_\ell)}.
\end{equation}
\end{theorem}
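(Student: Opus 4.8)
The plan is to mimic the proof of Theorem~\ref{thrm:Gamma0}: compute the Laurent coefficient $\gamma_1\bigl(H_{\bs a,\bs c}(t)\bigr)$ of the auxiliary function~\eqref{eq:Hac} as a function of the perturbed weights $\bs c$, and then appeal to the identity $\lim_{\bs c\to\bs a}\gamma_m\bigl(H_{\bs a,\bs c}(t)\bigr)=\gamma_m(\bs a)$ recalled before Theorem~\ref{thrm:Gamma0} (its proof, via the Dominated Convergence Theorem, applies here as well). The first observation is that the $(i,\zeta)$-summand of $H_{\bs a,\bs c}(t)$ with $\zeta=1$ has a pole of order $n-1$ at $t=1$, whereas an $(i,\zeta)$-summand with $\zeta$ a primitive $d$-th root of unity ($d\mid a_i$, $d>1$) has pole order $\#\{j\ne i:d\mid a_j\}$, which is at most $n-2$ because faithfulness of $\bs a$ prevents $d$ from dividing every weight. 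Consequently $\gamma_1\bigl(H_{\bs a,\bs c}(t)\bigr)$ collects exactly two contributions: the coefficient of $(1-t)^{2-n}$ coming from the $\zeta=1$ summands, and the leading coefficient of those $\zeta\ne1$ summands whose pole order is exactly $n-2$.

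For the $\zeta=1$ part, expand each of the $n-1$ factors by Equation~\eqref{eq:LaurentGeneral} through order $(1-t)^0$; writing $e_j=(c_i-c_j)/c_i$ and using $\sum_{\ell\ne i}(e_\ell-1)=-(E_1(\bs c)-c_i)/c_i$, the coefficient of $(1-t)^{2-n}$ works out to
\begin{equation*}
    \sum\limits_{i=1}^k \frac{c_i^{n-3}\bigl(E_1(\bs c) - c_i\bigr)}{2\prod_{j\neq i}(c_i - c_j)}
    =
    \sum\limits_{i=1}^k\sum\limits_{\substack{j=1\\ j\neq i}}^n \frac{c_i^{n-3}c_j}{2\prod_{\ell\neq i}(c_i-c_\ell)}.
\end{equation*}
Splitting off $E_1(\bs c)$ and recognizing each of the two resulting partial-fraction sums $\sum_{i\le k}c_i^{u}/\prod_{j\ne i}(c_i-c_j)$ (for $u=n-2$ and $u=n-3$) as a partial Laurent--Schur expression exactly as in Equations~\eqref{eq:CombinRational}--\eqref{eq:PartSchurMotiv} of the proof of Theorem~\ref{thrm:Gamma0}, this equals $\bigl(E_1(\bs c)S_{n-3}(\bs c)-S_{n-2}(\bs c)\bigr)\big/\bigl(2\prod_{p=1}^k\prod_{q=k+1}^n(c_p-c_q)\bigr)$; letting $\bs c\to\bs a$ gives the first term of Equation~\eqref{eq:Gamma1}, and keeping the unsummed form instead gives the first double sum of Equation~\eqref{eq:Gamma1Generic}. (Here $n\ge3$ since $1<k<n$, so $u=n-3\ge0$ and Theorem~\ref{thrm:PartSchur} applies.)

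The substantive step is the $\zeta\ne1$ part. For $\zeta$ a primitive $d$-th root of unity with $d\mid a_i$, $d>1$, the pole order at $t=1$ is $n-2$ precisely when there is a (necessarily unique) index $j_0\ne i$ with $d\nmid a_{j_0}$; equivalently $d\mid g_{j_0}$, and then $d\nmid a_{j_0}$ is automatic since $\gcd(a_{j_0},g_{j_0})=\gcd(\bs a)=1$. Conversely, for any index $j_0$ and any $d\mid g_{j_0}$ with $d>1$, every $i\le k$ with $i\ne j_0$ gives such a summand with exceptional index $j_0$ (note $d\mid a_i$ as $i\ne j_0$, and there is at least one valid $i$ because $k>1$), and this sets up a bijection between the contributing $(i,\zeta)$-pairs and the data $(j_0,d,\zeta,i)$. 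Computing the leading Laurent coefficient of such a summand — the $n-2$ factors with $j\ne i,j_0$ contribute simple poles, while the $j=j_0$ factor evaluates at $t=1$ to $1/(1-\zeta^{a_{j_0}})$ — yields $\tfrac{1}{1-\zeta^{a_{j_0}}}\cdot\tfrac{-c_i^{n-3}}{\prod_{j\ne i,j_0}(c_i-c_j)}$. Summing over $i\le k$, $i\ne j_0$ reproduces the $\gamma_0$-expression of Equation~\eqref{eq:Gamma0Generic} applied to $\bs c_{j_0}$; summing $1/(1-\zeta^{a_{j_0}})$ over all $\zeta\ne1$ with $\zeta^{g_{j_0}}=1$ gives $(g_{j_0}-1)/2$, since $\gcd(a_{j_0},g_{j_0})=1$ makes $\zeta\mapsto\zeta^{a_{j_0}}$ a permutation of the $g_{j_0}$-th roots of unity fixing $1$, reducing to $\sum_{\eta^{g}=1,\,\eta\ne1}\tfrac{1}{1-\eta}=\tfrac{g-1}{2}$, which follows from the logarithmic derivative of $1+x+\cdots+x^{g-1}$ at $x=1$. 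Letting $\bs c\to\bs a$ and summing over $j_0$ produces $\sum_{j=1}^n\tfrac{g_j-1}{2}\gamma_0(\bs a_j)$, the second term of Equation~\eqref{eq:Gamma1}, with the unsummed form giving the second double sum of Equation~\eqref{eq:Gamma1Generic}. Here $\gamma_0(\bs a_j)$ is read, following the convention of Theorem~\ref{thrm:Gamma0}, as the right-hand side of Equation~\eqref{eq:Gamma0} evaluated at $\bs a_j$; this agrees with the honest Laurent coefficient when $\bs a_j$ is faithful and stable, and it vanishes when all weights of $\bs a_j$ have the same sign (as the relevant determinant then has two equal rows).

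Adding the two parts yields Equations~\eqref{eq:Gamma1} and~\eqref{eq:Gamma1Generic}. I expect the main obstacle to be the combinatorial bookkeeping of the $\zeta\ne1$ terms: correctly identifying which roots of unity raise the pole order to exactly $n-2$, verifying that the reindexing by the exceptional index $j_0$ is a genuine bijection, and carrying the real-parameter expressions through the limit $\bs c\to\bs a$ so that all apparent singularities at coinciding weights cancel as in Theorem~\ref{thrm:Gamma0}.
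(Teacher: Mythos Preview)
Your proposal is correct and follows essentially the same approach as the paper: split $\gamma_1$ into the $\zeta=1$ contribution (computed via Equation~\eqref{eq:LaurentGeneral} and the Cauchy product, then rewritten through the $S_u$-identities) and the $\zeta\neq 1$ contribution (indexed by the exceptional weight $j_0$ and summed using $\sum_{\zeta^g=1,\,\zeta\neq1}1/(1-\zeta)=(g-1)/2$). The only cosmetic differences are that the paper cites \cite[Corollary~3.2]{Gessel} for this root-of-unity sum while you derive it directly, and you are a bit more explicit about the bijection between $(i,\zeta)$-pairs of pole order $n-2$ and the data $(j_0,\zeta,i)$.
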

\begin{proof}
Considering Equation~\eqref{eq:Hac}, a term of
$H_{\bs{a},\bs{c}}(t)$ contributes to the degree $2-n$ term of the Laurent series
if it has a pole order of either $n-1$ or $n-2$. The former case corresponds to
terms with $\zeta = 1$, while the latter corresponds to choices of $i$ and $a_i$th
root of unity $\zeta$ such that $\zeta^{a_j} = 1$ for all but one $j\neq i$.
We first consider the former.

Using the Laurent series in Equation~\eqref{eq:LaurentGeneral} and the Cauchy product
formula, the degree $2-n$ term of the Laurent series of the expression in
Equation~\eqref{eq:Zeta1Terms} for the terms with $\zeta=1$ is given by
\begin{equation}
\label{eq:Gamma1FirstTermsGeneric}
    \sum\limits_{i=1}^k \frac{1}{-c_i} \sum\limits_{\substack{j=1\\ j\neq i}}^n
    \left( \prod\limits_{\substack{\ell=1\\\ell\neq i,j}}^n \frac{ c_i }
    {(c_i - c_\ell)}
    \right)\frac{\big((c_i-c_j)/c_i - 1\big)c_i}{2(c_i - c_j)}
    =
    \sum\limits_{i=1}^k \sum\limits_{\substack{j=1\\ j\neq i}}^n
    \frac{c_i^{n-3} c_j}
    {2\prod\limits_{\substack{\ell=1\\ \ell \neq i}}^n (c_i - c_\ell)}.
\end{equation}
Combining via the same process as in Equation~\eqref{eq:CombinRational} above yields
\begin{align}
    \label{eq:Gamma1FirstRat}
    &=\frac{\sum\limits_{i=1}^k (-1)^{i-1} c_i^{n-3}
        \prod\limits_{\substack{1\leq p<q \leq n\\ p,q\neq i}} (c_p - c_q)
        \sum\limits_{\substack{j=1\\ j \neq i}}^n c_j}
        {2\prod\limits_{1\leq p<q \leq n} (c_p - c_q)}
    \\ \nonumber &=
        \frac{E_1(\bs{c}) \sum\limits_{i=1}^k (-1)^{i-1} c_i^{n-3}
        \prod\limits_{\substack{1\leq p<q \leq n\\ p,q\neq i}} (c_p - c_q)
        -
        \sum\limits_{i=1}^k (-1)^{i-1} c_i^{n-2}
        \prod\limits_{\substack{1\leq p<q \leq n\\ p,q\neq i}} (c_p - c_q)}
        {2\prod\limits_{1\leq p<q \leq n} (c_p - c_q)}
    \\ \label{eq:Gamma1FirstTerms}
    &=
    \frac{E_1(\bs{c}) S_{n-3}(\bs{c}) - S_{n-2}(\bs{c})}
        {2 \prod\limits_{p=1}^k\prod\limits_{q=k+1}^n (c_p - c_q) }.
\end{align}

We now consider the terms in $H_{\bs{a},\bs{c}}(t)$ with a pole order of $n-2$,
which correspond as noted above to choices of $i$ and $\zeta$
such that $\zeta^{a_j} = 1$ for all but one $j\neq i$. This clearly occurs only if there
is a $j$ such that $g_j \neq 1$ and $\zeta$ is a non-unit $g_j$th root of unity. Hence,
for such a choice of $i$ and $j$, the corresponding terms of $H_{\bs{a},\bs{c}}(t)$ are
\[
    \sum\limits_{\substack{\zeta^{g_j} = 1 \\ \zeta\neq 1}}
        \frac{1}{-c_i (1 - \zeta^{a_j} t^{(c_i-c_j)/c_i})
            \prod\limits_{\substack{\ell=1\\ \ell\neq i,j}}^n (1 - t^{(c_i-c_\ell)/c_i})}.
\]
Using Equation~\eqref{eq:LaurentGeneral} and noting that $\gcd(a_j, g_j) = \gcd(a_1,\ldots,a_n)=1$,
the first term of the Laurent series of this expression is
\begin{align}
    \nonumber
    \sum\limits_{\substack{\zeta^{g_j} = 1\\ \zeta\neq 1}}
            \frac{c_i^{n-2}}{-c_i(1 - \zeta^{a_j})
            \prod\limits_{\substack{\ell=1 \\ \ell\neq i,j}}^n (c_i - c_\ell)}
    &=      \frac{c_i^{n-2}}{-c_i \prod\limits_{\substack{\ell=1 \\ \ell\neq i,j}}^n (c_i - c_\ell)}
            \sum\limits_{\substack{\zeta^{g_j} = 1\\ \zeta\neq 1}} \frac{1}{1 - \zeta}
    \\ \label{eq:Gamma1LastTermsGeneric}
    &=    \frac{- c_i^{n-3}}{\prod\limits_{\substack{\ell=1 \\ \ell\neq i,j}}^n (c_i - c_\ell)}
            \left( \frac{g_j - 1}{2} \right),
\end{align}
where the sum over $\zeta$ is computed using \cite[Corollary 3.2]{Gessel}.

Along with taking the limit as $\bs{c}\to\bs{a}$ as described above,
combining Equations~\eqref{eq:Gamma1FirstTermsGeneric} and \eqref{eq:Gamma1LastTermsGeneric}
yields Equation~\eqref{eq:Gamma1Generic}. Then Equation~\eqref{eq:Gamma1} follows from
Equation~\eqref{eq:Gamma1FirstTerms} and comparing Equation~\eqref{eq:Gamma1LastTermsGeneric}
with Equation~\eqref{eq:Gamma0Generic} for $\gamma_0(\bs{a}_j)$.
\end{proof}

The approach used in Theorems~\ref{thrm:Gamma0} and~\ref{thrm:Gamma1} above can be used to compute
$\gamma_m(\bs{a})$ for $m > 1$, and the results can similarly be expressed in terms of the
$S_u(\bs{a})$. However, in these cases, one meets sums of rational expressions over roots of unity
as in Equation~\eqref{eq:Gamma1LastTermsGeneric} that are not readily computable using the results
of \cite{Gessel}. Recall \cite[(1.13)]{BeckRobinsBook} that a \emph{Fourier-Dedekind sum} is a sum of
the form
\[
    \sigma_r(a_2,\ldots,a_n;a_1)
    =
    \frac{1}{a_1} \sum\limits_{\substack{\zeta^{a_1}=1 \\ \zeta\neq 1}}
        \frac{\zeta^r }{\prod\limits_{j=2}^n (1 - \zeta^{a_j})}
\]
where each $a_2,\ldots,a_n$ is coprime to $a_1$; see also \cite{BeckDiazRobinsFDS,TsukermanFDS}.
The expressions below for $\gamma_m(\bs{a})$ with $m > 1$ involve nontrivial ``partial" Fourier--Dedekind
sums that bear a resemblance to Ramanujan's sum, i.e. sums over a subset of nonunit roots of unity, with
the coprime conditions on the $a_i$ relaxed.

For example, we have the following, whose proof involves tedious computations but requires
no more ingredients than those in the proof of Theorem~\ref{thrm:Gamma1}.
For each $j\neq \ell$, let $\bs{a}_{j,\ell} \in \Z^{n-2}$ denote the weight vector $\bs{a}$ with
$a_j$ and $a_\ell$ removed and let
$g_{j,\ell} := \gcd\bs{a}_{j,\ell} = \gcd\{ a_i : i\neq j,\ell \}$.

\begin{theorem}
\label{thrm:Gamma2}
Let $\bs{a} = (a_1,\ldots,a_n)$ be the weight vector for an effective action of $\C^\times$ on $\C^n$.
Assume that $a_i < 0$ for $i \leq k$ and $a_i > 0$ for $i > k$, and moreover that
$\bs{a}$ is stable ($1 < k < n$). Then
\begin{align*}
    \gamma_2(\bs{a}) &=
    \frac{5 E_1(\bs{a}) S_{n-3}(\bs{a}) - (E_2(\bs{a}) + E_1(\bs{a})^2)S_{n-4}(\bs{a})
        - 4 S_{n-2}(\bs{a})}
        {12\prod\limits_{p=1}^k\prod\limits_{q=k+1}^n (a_p - a_q)}
    \\&  \quad +
    \sum\limits_{j=1}^n \frac{1 - g_j^2}{12} \left(
        \frac{S_{n-3}(\bs{a}_j) - a_j S_{n-4}(\bs{a}_j)}
        {\prod\limits_{\substack{p=1\\ p\neq j}}^k
         \prod\limits_{\substack{q=k+1\\ q\neq j}}^n (a_p - a_q)}
        \right)
    \\& \quad +
    \sum\limits_{j=1}^n \frac{g_j-1}{4} \left(
        \frac{E_1(\bs{a}_j) S_{n-4}(\bs{a}_j)
        - S_{n-3}(\bs{a}_j)}
        {\prod\limits_{\substack{p=1\\ p\neq j}}^k\prod\limits_{\substack{q=k+1\\ q\neq j}}^n (a_p - a_q)}
        \right)
    \\&  \quad +
    \sum\limits_{1\leq j < \ell \leq n}
        \frac{ S_{n-4}(\bs{a}_{j,\ell})}
        {\prod\limits_{\substack{p=1\\ p\neq j,\ell}}^k\prod\limits_{\substack{q=k+1 \\ q\neq j,\ell}}^n (a_p - a_q)}
        \sum\limits_{\substack{\zeta^{g_{j,\ell}}=1 \\ \zeta^{g_j}\neq 1, \zeta^{g_\ell}\neq 1}}
        \frac{1}{(1 - \zeta^{a_j})(1 - \zeta^{a_\ell})}.
\end{align*}
When $\bs{a}$ is generic, this can be expressed as
\begin{align*}
    \gamma_2
    &=
    \sum\limits_{i=1}^k \frac{a_i^{n-4}}{12\prod\limits_{\substack{j=1 \\ j\neq i}}^n (a_i-a_j)}
    \left( \left( \sum\limits_{\substack{j=1\\ j\neq i}}^n
        (2a_i - a_j)a_j \right)
        - 3\sum\limits_{\substack{1\leq j < \ell \leq n \\ j,\ell\neq i}}^n a_j a_\ell \right)
    \\& \quad+
    \sum\limits_{i=1}^k\sum\limits_{\substack{j=1\\ j\neq i}}^n
        \left(
        \frac{(1 - g_j^2)a_i^{n-4} (a_i - a_j)}
        {12\prod\limits_{\substack{\ell=1\\ \ell\neq i,j}}^n (a_i-a_\ell)}
        + \frac{g_j - 1}{2}
        \sum\limits_{\substack{\ell=1\\ \ell\neq i,j}}^n \frac{a_i^{n-4} a_\ell}
        {2\prod\limits_{\substack{p=1\\ p\neq i,j}}^n (a_i-a_p)}\right)
    \\& \quad+
        \sum\limits_{i=1}^k \sum\limits_{\substack{1\leq j < \ell \leq n \\ j,\ell \neq i}}
        \frac{-a_i^{n-4}}{\prod\limits_{\substack{p=1\\p\neq i,j,\ell}}^n (a_i-a_p)}
        \sum\limits_{\substack{\zeta^{g_{j,\ell}}=1 \\ \zeta^{g_j}\neq 1, \zeta^{g_\ell}\neq 1}}
        \frac{1}{(1 - \zeta^{a_j})(1 - \zeta^{a_\ell})}.
\end{align*}
\end{theorem}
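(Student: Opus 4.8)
The plan is to follow the pattern of the proofs of Theorems~\ref{thrm:Gamma0} and~\ref{thrm:Gamma1}, expanding $H_{\bs{a},\bs{c}}(t)$ at $t=1$ one order further. Since $\dim(\C[\C^n]^{\C_{\bs{a}}^\times})=n-1$, the quantity $\gamma_2(\bs{a})=\lim_{\bs{c}\to\bs{a}}\gamma_2(H_{\bs{a},\bs{c}}(t))$ is the coefficient of $(1-t)^{3-n}$ in the Laurent expansion of $H_{\bs{a},\bs{c}}(t)$ at $t=1$. A term of Equation~\eqref{eq:Hac} indexed by $i$ and $\zeta$ has a pole at $t=1$ of order $\#\{j\neq i:\zeta^{a_j}=1\}$, so only terms of pole order $n-1$, $n-2$, or $n-3$ contribute: pole order $n-1$ occurs exactly when $\zeta=1$; pole order $n-2$ for a chosen $i$ together with a non-unit $g_j$th root of unity $\zeta$ for some $j\neq i$ (equivalently $\zeta^{a_p}=1$ for all $p\neq i,j$); and pole order $n-3$ for a chosen $i$, an unordered pair $j,\ell\neq i$, and those $\zeta$ with $\zeta^{g_{j,\ell}}=1$ but $\zeta^{g_j}\neq1$ and $\zeta^{g_\ell}\neq1$. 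As in the paragraph preceding Theorem~\ref{thrm:Gamma0}, I would extract the degree $3-n$ coefficient for fixed generic $\bs{c}$, observe that the resulting expression is continuous at $\bs{c}=\bs{a}$, and pass to the limit via the Dominated Convergence Theorem exactly as in \cite[Section 5.2]{HerbigSeatonHilbSympCirc}. Throughout, write $\beta_{ij}=(c_i-c_j)/c_i$, so $\beta_{ij}-1=-c_j/c_i$.

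For the $\zeta=1$ terms I would expand each $1/(1-t^{\beta_{ij}})$ via Equation~\eqref{eq:LaurentGeneral} through order $(1-t)^1$ and isolate the $(1-t)^2$-coefficient of $\prod_{j\neq i}\bigl(1+\tfrac{\beta_{ij}-1}{2}(1-t)+\tfrac{\beta_{ij}^2-1}{12}(1-t)^2\bigr)$; this receives a contribution from one factor supplying its $(1-t)$-term with the rest supplying constants, and from two factors each supplying their $(1-t)^0$-correction. Simplifying yields the generic expression $\sum_{i=1}^k \tfrac{c_i^{n-4}}{12\prod_{j\neq i}(c_i-c_j)}\bigl(\sum_{j\neq i}(2c_i-c_j)c_j-3\sum_{j<\ell,\,j,\ell\neq i}c_jc_\ell\bigr)$, the first generic line of the statement. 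Expanding the bracket in terms of $c_i$, $E_1(\bs{c})$ and $E_2(\bs{c})$ collapses it to $5c_iE_1(\bs{c})-4c_i^2-E_1(\bs{c})^2-E_2(\bs{c})$, and then applying the cofactor identity $\sum_{i=1}^k c_i^u/\prod_{j\neq i}(c_i-c_j)=S_u(\bs{c})/\prod_{p=1}^k\prod_{q=k+1}^n(c_p-c_q)$ --- valid for every integer $u\leq n-2$, via the derivation surrounding Equation~\eqref{eq:PartSchurMotiv} --- produces the first line of the claimed formula after $\bs{c}\to\bs{a}$.

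For a pole-order-$(n-2)$ pair $(i,j)$ I would factor out the factor $1/(1-\zeta^{a_j}t^{\beta_{ij}})$, whose expansion at $t=1$ is $1/(1-\zeta^{a_j})-\bigl(\zeta^{a_j}\beta_{ij}/(1-\zeta^{a_j})^2\bigr)(1-t)+\cdots$, expand the $n-2$ vanishing factors to the needed order, and take the $(1-t)^1$-coefficient relative to the leading $(1-t)^{2-n}$ of their product; since $\gcd(a_j,g_j)=1$, the two resulting sums over non-unit $g_j$th roots of unity may be re-indexed by $\xi=\zeta^{a_j}$ and equal $\sum_{\xi^{g_j}=1,\,\xi\neq1}1/(1-\xi)=(g_j-1)/2$ and $\sum_{\xi^{g_j}=1,\,\xi\neq1}\xi/(1-\xi)^2=(1-g_j^2)/12$, both obtained from the logarithmic derivative of $(x^{g_j}-1)/(x-1)$ as in \cite[Corollary 3.2]{Gessel}. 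This yields the second generic line; after $\bs{c}\to\bs{a}$, rewriting $(a_i-a_j)a_i^{n-4}=a_i^{n-3}-a_ja_i^{n-4}$ and $\bigl(\sum_{\ell\neq i,j}a_\ell\bigr)a_i^{n-4}=E_1(\bs{a}_j)a_i^{n-4}-a_i^{n-3}$ and applying the cofactor identity to the weight vector $\bs{a}_j$ (in the sense of Remark~\ref{rem:kNotAbuse}) recasts it in terms of $S_{n-3}(\bs{a}_j)$ and $S_{n-4}(\bs{a}_j)$. Finally, for a pole-order-$(n-3)$ triple $(i,j,\ell)$ the term equals a function holomorphic at $t=1$ times $(1-t)^{3-n}$, so its degree $3-n$ coefficient is the value at $t=1$ of $\tfrac{1}{-c_i}\cdot\tfrac{1}{(1-\zeta^{a_j})(1-\zeta^{a_\ell})}\prod_{p\neq i,j,\ell}\tfrac{1}{\beta_{ip}}$; summing over the relevant $\zeta$ gives the third generic line, and applying the cofactor identity to $\bs{a}_{j,\ell}$ gives the last line. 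The unit-root sums that survive here are the ``partial Fourier--Dedekind sums'' of the statement and are left unevaluated.

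The main obstacle is organizational rather than conceptual: in the pole-order-$(n-1)$ contribution one must correctly enumerate all ways to distribute two orders of regularity among the $n-1$ factors, and when combining the resulting partial-fraction sums into a single rational function one must keep precise track of the signs introduced by the Vandermonde factors so that the numerator is recognizable as a cofactor expansion of a partial Laurent-Schur determinant; the same sign bookkeeping must be carried out when passing from $\bs{a}$ to the shortened vectors $\bs{a}_j$ and $\bs{a}_{j,\ell}$. The only genuinely new analytic input beyond the proof of Theorem~\ref{thrm:Gamma1} is the evaluation $\sum_{\xi^{g}=1,\,\xi\neq1}\xi/(1-\xi)^2=(1-g^2)/12$; the partial Fourier--Dedekind sums in the last term are not reducible by the methods of \cite{Gessel} and are therefore retained in the final expression.
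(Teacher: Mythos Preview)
Your proposal is correct and follows exactly the approach the paper indicates: the paper does not spell out a proof of this theorem but states that it ``involves tedious computations but requires no more ingredients than those in the proof of Theorem~\ref{thrm:Gamma1}.'' Your outline carries out precisely those computations---classifying the contributing $(i,\zeta)$ by pole order $n-1$, $n-2$, $n-3$, expanding each via Equation~\eqref{eq:LaurentGeneral} and the Cauchy product, evaluating the root-of-unity sums (including the new identity $\sum_{\xi^g=1,\,\xi\neq1}\xi/(1-\xi)^2=(1-g^2)/12$), and then recognizing the resulting partial-fraction sums as partial Laurent--Schur polynomials via the cofactor argument of Equation~\eqref{eq:PartSchurMotiv}---and your bookkeeping of the algebraic simplifications (e.g.\ rewriting the bracket as $5c_iE_1-4c_i^2-E_1^2-E_2$) checks out.
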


For the entertainment of the reader, we also state the following. Continuing the same notational convention as
above, for each distinct $j,\ell,p$, let $\bs{a}_{j,\ell,p} \in \Z^{n-3}$ denote the weight vector $\bs{a}$ with
$a_j$, $a_\ell$, and $a_p$ removed and let $g_{j,\ell,p} := \gcd\bs{a}_{j,\ell,p} = \gcd\{ a_i : i\neq j,\ell,p \}$.

\begin{theorem}
\label{thrm:Gamma3}
Let $\bs{a} = (a_1,\ldots,a_n)$ be the weight vector for an effective action of $\C^\times$ on $\C^n$.
Assume that $a_i < 0$ for $i \leq k$ and $a_i > 0$ for $i > k$, and moreover that
$\bs{a}$ is stable ($1 < k < n$). Then
\begin{align*}
    \gamma_3 &=
    \frac{-6 S_{n-2}(\bs{a})
        + (8 E_1(\bs{a}))S_{n-3}(\bs{a})
        - (3 E_2(\bs{a}) + 2 E_1(\bs{a})^2)S_{n-4}(\bs{a})
        + E_1(\bs{a}) E_2(\bs{a}) S_{n-5}(\bs{a})
    }{24\prod\limits_{p=1}^k\prod\limits_{q=k+1}^n (a_p - a_q)}
    \\  &\quad +
    \sum\limits_{j=1}^n \left(\frac{1 - g_j}{24} \right)
        \frac{4 S_{n-3}(\bs{a}_j)
        - (5 E_1(\bs{a}_j)) S_{n-4}(\bs{a}_j)
        + (E_2(\bs{a}_j) + E_1(\bs{a}_j)^2)S_{n-5}(\bs{a}_j)}
            {\prod\limits_{\substack{p=1\\ p\neq j}}^k\prod\limits_{\substack{q=k+1\\ q\neq j}}^n (a_p - a_q)}
\\
    &\quad +
    \sum\limits_{j=1}^n \frac{g_j^2-1}{24}
        \left(
        \frac{-2 S_{n-3}(\bs{a}_j)
        + (2a_j + E_1(\bs{a}_j))S_{n-4}(\bs{a}_j)
        - a_j E_1(\bs{a}_j) S_{n-5}(\bs{a}_j)}
            {\prod\limits_{\substack{p=1\\ p\neq j}}^k\prod\limits_{\substack{q=k+1\\ q\neq j}}^n (a_p - a_q)}
        \right)
    \\  &\quad +
    \sum\limits_{1\leq j < \ell \leq n}
        \sum\limits_{\substack{\zeta^{g_{j,\ell}}=1 \\ \zeta^{g_j}\neq 1, \zeta^{g_\ell}\neq 1}}
        \frac{1}{(1 - \zeta^{a_j})(1 - \zeta^{a_\ell})}
        \left(
        \frac{E_1(\bs{a}_{j,\ell})S_{n-5}(\bs{a}_{j,\ell})
            - S_{n-4}(\bs{a}_{j,\ell})}
            {2\prod\limits_{\substack{p=1\\ p\neq j,\ell}}^k \prod\limits_{\substack{q=k+1\\ q\neq j,\ell}}^n (a_p - a_q)}
        \right)
\\
    &\quad +
    \sum\limits_{1\leq j < \ell \leq n}
        \sum\limits_{\substack{\zeta^{g_{j,\ell}}=1 \\ \zeta^{g_j}\neq 1, \zeta^{g_\ell}\neq 1}}
        \left(
        \left( \frac{\zeta^{a_j}}{(1 - \zeta^{a_j})^2(1 - \zeta^{a_\ell})} \right)
            \frac{S_{n-4}(\bs{a}_{j,\ell})
                - a_j S_{n-5}(\bs{a}_{j,\ell})}
            {\prod\limits_{\substack{p=1\\ p\neq j,\ell}}^k
            \prod\limits_{\substack{q=k+1\\ q\neq j,\ell}}^n (a_p - a_q)}
        \right.
    \\ &\quad\quad\quad +
        \left.
        \left( \frac{\zeta^{a_\ell}}{(1 - \zeta^{a_j})(1 - \zeta^{a_\ell})^2} \right)
            \frac{S_{n-4}(\bs{a}_{j,\ell})
                - a_\ell S_{n-5}(\bs{a}_{j,\ell})}
            {\prod\limits_{\substack{p=1\\ p\neq j,\ell}}^k
            \prod\limits_{\substack{q=k+1\\ q\neq j,\ell}}^n (a_p - a_q)}
        \right)
    \\ &\quad +
    \sum\limits_{1\leq j < \ell < p \leq n}
    \frac{ -S_{n-5}(\bs{a}_{j,\ell,p}) }
    {\prod\limits_{\substack{p=1 \\ p\neq j,\ell, p}}^k
    \prod\limits_{\substack{q=k+1 \\ q \neq j, \ell, p}}^n (a_p - a_q)}
    \sum\limits_{\substack{\zeta^{g_{j,\ell,p}}=1 \\ \zeta^{g_{j,\ell}}\neq 1, \zeta^{g_{j,p}}\neq 1 \\
        \zeta^{g_{\ell,p}}\neq 1}}
        \frac{1}{(1 - \zeta^{a_j})(1 - \zeta^{a_\ell})(1 - \zeta^{a_p})}.
\end{align*}
When $\bs{a}$ is generic,
\begin{align*}
    \gamma_3
    &=
    \sum\limits_{i=1}^k
    \frac{a_i^{n-5} \left(
        \sum\limits_{\substack{1\leq j < \ell < p \leq n \\ j,\ell,p\neq i}} 3a_j a_\ell a_p
        +
        \sum\limits_{\substack{j=1\\ j\neq i}}^n \sum\limits_{\substack{\ell=1\\ \ell\neq i,j}}^n
        a_j a_\ell(a_\ell - 2a_i)
        +
        \sum\limits_{\substack{j=1\\ j\neq i}}^n a_i a_j(2a_i - a_j)
        \right)
        }{24\prod\limits_{\substack{q=1 \\ q\neq i}}^n (a_i-a_q)}
\\&\quad+
    \sum\limits_{i=1}^k \sum\limits_{\substack{j=1\\ j\neq i}}^n \left(
    \frac{g_j-1}{2} \left( \sum\limits_{\substack{1\leq\ell < p \leq n\\ \ell,p\neq i,j}}^n
        \frac{-a_i^{n-5} a_\ell a_p}{4\prod\limits_{\substack{q=1\\q\neq i,j}}^n(a_i-a_q)}
        +
        \sum\limits_{\substack{\ell=1 \\ \ell\neq i,j}}^n
            \frac{-a_i^{n-5} a_\ell(a_\ell -2a_i)}
            {12\prod\limits_{\substack{p=1\\p\neq i,j}}^n(a_i-a_p)}
        \right)\right.
    \\  &\quad
        \left.
        + \frac{g_j^2-1}{24}\left(
        \frac{a_i^{n-5}(a_i-a_j)\left(-a_i + \sum\limits_{\substack{\ell=1 \\ \ell\neq i,j}}^n a_\ell \right)}
        {\prod\limits_{\substack{\ell=1\\\ell\neq i,j}}^n (a_i-a_\ell)}
        \right)\right)
    \\ &+
    \sum\limits_{i=1}^k \sum\limits_{\substack{1\leq j < \ell \leq n\\j,\ell\neq i}}
    \left(
    \sum\limits_{\substack{\zeta^{g_{j,\ell}}=1 \\ \zeta^{g_j}\neq 1, \zeta^{g_\ell}\neq 1}}
        \frac{1}{(1 - \zeta^{a_j})(1 - \zeta^{a_\ell})}
        \sum\limits_{\substack{p=1\\p\neq i,j,\ell}}^n
        \frac{a_i^{n-5} a_p}{2\prod\limits_{\substack{q=1\\q\neq i,j,\ell}}^n (a_i - a_q)}
    \right.
\\
    &\quad
    \left.
    + \frac{a_i^{n-5}}{\prod\limits_{\substack{p=1\\p\neq i,j,\ell}}^n (a_i - a_p)}
        \sum\limits_{\substack{\zeta^{g_{j,\ell}}=1 \\ \zeta^{g_j}\neq 1, \zeta^{g_\ell}\neq 1}}
        \left( \frac{\zeta^{a_j}(a_i - a_j)}{(1 - \zeta^{a_j})^2(1 - \zeta^{a_\ell})}
        + \frac{\zeta^{a_\ell}(a_i - a_\ell)}{(1 - \zeta^{a_j})(1 - \zeta^{a_\ell})^2}\right)\right)
    \\ &+
    \sum\limits_{i=1}^k \sum\limits_{\substack{1\leq j < \ell < p \leq n\\j,\ell,p\neq i}}
    \sum\limits_{\substack{\zeta^{g_{j,\ell,p}}=1 \\ \zeta^{g_{j,\ell}}\neq 1, \zeta^{g_{j,p}}\neq 1 \\
        \zeta^{g_{\ell,p}}\neq 1}}\frac{-a_i^{n-5}}
        {(1 - \zeta^{a_j})(1 - \zeta^{a_\ell})(1 - \zeta^{a_p})\prod\limits_{\substack{q=1\\q\neq i,j,\ell,p}}^n (a_i - a_q)}.
\end{align*}
\end{theorem}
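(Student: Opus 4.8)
The plan is to follow the template of the proofs of Theorems~\ref{thrm:Gamma1} and~\ref{thrm:Gamma2}. First I would compute the Laurent coefficient $\gamma_3\big(H_{\bs{a},\bs{c}}(t)\big)$ of the rational function $H_{\bs{a},\bs{c}}(t)$ in Equation~\eqref{eq:Hac} as an explicit rational function of $\bs{c}=(c_1,\ldots,c_n)$, and then take the limit $\bs{c}\to\bs{a}$. By the Dominated Convergence argument recalled at the start of this section together with Equation~\eqref{eq:HilbSumDegen2Unif}, this limit equals $\gamma_3(\bs{a})$ once one checks, as will be the case, that the expression for $\gamma_3\big(H_{\bs{a},\bs{c}}(t)\big)$ is continuous in $\bs{c}$ on the relevant domain, its only apparent singularities (at $c_p=c_q$ with $p,q$ on the same side of $k$) being removable. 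Since $\dim\C[\C^n]^{\C^\times_{\bs{a}}}=n-1$ by Theorem~\ref{thrm:Gamma0}, $\gamma_3$ is the coefficient of $(1-t)^{4-n}$, so only the terms of $H_{\bs{a},\bs{c}}(t)$ with a pole at $t=1$ of order $n-1$, $n-2$, $n-3$, or $n-4$ contribute. A term indexed by $i\leq k$ and a $(-a_i)$th root of unity $\zeta$ has pole order $n-1-s$, where $s=\#\{\,j\neq i : \zeta^{a_j}\neq 1\,\}$, and since $\gcd(a_1,\ldots,a_n)=1$ the four cases $s=0,1,2,3$ correspond respectively to $\zeta=1$; to $\zeta$ a non-unit $g_j$th root of unity for a single index $j$; to $\zeta$ with $\zeta^{g_{j,\ell}}=1$ but $\zeta^{g_j}\neq 1$ and $\zeta^{g_\ell}\neq 1$; and to $\zeta$ with $\zeta^{g_{j,\ell,p}}=1$ but $\zeta^{g_{j,\ell}}\neq 1$, $\zeta^{g_{j,p}}\neq 1$, $\zeta^{g_{\ell,p}}\neq 1$. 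These four families produce the blocks of summands in the asserted formula, the $s=1$ and $s=2$ families each contributing two blocks according to the order of differentiation applied to the non-polar factors.

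For each family I would expand every factor $1/\big(1-\zeta^{a_j}t^{(c_i-c_j)/c_i}\big)$ of the corresponding term about $t=1$. The $n-1-s$ factors with $\zeta^{a_j}=1$ have a simple pole there and are expanded using Equation~\eqref{eq:LaurentGeneral} (whose displayed terms already suffice, since after extracting the overall factor $(1-t)^{-(n-1-s)}$ one needs the regular factors only through order $(1-t)^{3-s}$), while the $s$ factors with $\zeta^{a_j}\neq 1$ are holomorphic at $t=1$ and contribute Taylor expansions; a derivative falling on such a factor produces precisely the quantities $\zeta^{a_j}/(1-\zeta^{a_j})^2$ appearing in the theorem. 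Extracting the coefficient of the relevant power of $(1-t)$ by the Cauchy product formula and then combining the sum over $i\leq k$ into a single rational function as in Equation~\eqref{eq:CombinRational}, the numerators appear as the first $k$ terms of cofactor expansions of alternants of the shape in Equation~\eqref{eq:PartSchurMotiv}, hence as the partial Laurent-Schur polynomials $S_u(\bs{a})$ for $u\in\{n-5,\ldots,n-2\}$ in the $\zeta=1$ family and $S_u(\bs{a}_j)$, $S_u(\bs{a}_{j,\ell})$, $S_u(\bs{a}_{j,\ell,p})$ in the other three, with the elementary symmetric functions $E_1$ and $E_2$ entering through the regular factors of the expansions just as $E_1$ did in the passage from Equation~\eqref{eq:Gamma1FirstRat} to Equation~\eqref{eq:Gamma1FirstTerms}. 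The sums over $\zeta$ that involve only one factor $1/(1-\zeta^{a_j})$ or its derivatives evaluate in closed form via \cite[Corollary~3.2]{Gessel} to polynomials in $g_j$, producing the coefficients $\tfrac{1-g_j}{24}$ and $\tfrac{g_j^2-1}{24}$; those involving products of two or three factors $1/(1-\zeta^{a})$ admit no such reduction and are left as the partial Fourier--Dedekind sums displayed. Taking $\bs{c}\to\bs{a}$ yields the stated identity, and the generic version follows by specializing each $S_u$ to the power-sum form used in Equations~\eqref{eq:Gamma0Generic} and~\eqref{eq:Gamma1Generic}.

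The argument uses no ingredient beyond those already appearing in the proof of Theorem~\ref{thrm:Gamma1}; the difficulty is entirely one of bookkeeping. The main obstacle is to enumerate correctly, for the pole orders $n-3$ and $n-4$, which pairs $(j,\ell)$ and triples $(j,\ell,p)$ arise and with which non-vanishing conditions on $\zeta$ --- the constraints such as $\zeta^{g_j}\neq 1$ being exactly what pins down the pole order of the corresponding term --- and then to carry out the long Cauchy-product expansions so that the numerators assemble into the $S_u$. A secondary subtlety, already visible in the step from Equation~\eqref{eq:Gamma1FirstTermsGeneric} to Equation~\eqref{eq:Gamma1FirstTerms}, is tracking the signs and the shift in the subscript $u$ of $S_u$ as factors pass between the polar and regular groups, and confirming that every apparent singularity at $c_p=c_q$ with $p,q$ of like sign cancels, so that the limit $\bs{c}\to\bs{a}$ may indeed be taken termwise.
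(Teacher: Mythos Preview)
Your proposal is correct and matches the paper's approach exactly: the paper gives no detailed proof of Theorem~\ref{thrm:Gamma3}, stating only (just before Theorem~\ref{thrm:Gamma2}) that the argument ``involves tedious computations but requires no more ingredients than those in the proof of Theorem~\ref{thrm:Gamma1}.'' Your outline---stratifying the terms of $H_{\bs{a},\bs{c}}(t)$ by pole order $n-1,\ldots,n-4$, expanding via Equation~\eqref{eq:LaurentGeneral} and the Cauchy product, reassembling the numerators as partial Laurent--Schur polynomials through Equation~\eqref{eq:PartSchurMotiv}, evaluating the single-factor root-of-unity sums by \cite[Corollary~3.2]{Gessel}, and leaving the multi-factor sums as partial Fourier--Dedekind sums---is precisely this template, and your identification of the bookkeeping as the only real difficulty is exactly the paper's own assessment.
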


As complicated as the expressions in Theorems~\ref{thrm:Gamma2} and \ref{thrm:Gamma3} appear,
it is important to note that they become vastly simpler when one imposes mild coprime
hypotheses on the weights. For instance, if we assume that no set of $n-2$ weights has a
nontrivial common divisor, then each $g_j = g_{j,\ell} = 1$. Hence in both expressions
for $\gamma_2$, all but the first line vanishes. In particular, the generalized Fourier-Dedekind
sum no longer appears. The same holds for $\gamma_3$ if we assume that no set of $n-3$ weights
has a common divisor.

To indicate one application of these formulas, note that the $\gamma_m(A)$ can be expressed
in terms of the degrees of the elements of a Hironaka decomposition of $A$; see
Appendix~\ref{app:CMAlg}. Hence, Theorems~\ref{thrm:Gamma0}, \ref{thrm:Gamma1}, \ref{thrm:Gamma2},
and \ref{thrm:Gamma3} can be used to determine bounds on these degrees. To illustrate this in
a simple case, assume $\bs{a}$ is a generic weight vector with $k = 1$. Examining
Equation~\eqref{eq:Gamma0Generic}, it is easy to see that $\gamma_0(\bs{a}) \leq 1/2$.
From Equation~\eqref{eq:gammaexpressions}, it then follows that the number of module
generators in the Hironaka decomposition, so-called \emph{secondary invariants}, is
bounded by half the product of the degrees of elements of a homogeneous system of parameters,
the \emph{primary invariants}. While we will not pursue this line of reasoning further here,
it would be interesting to conduct numerical experiments to identify if such comparisons yield
strong restrictions on the number and degrees of primary and secondary invariants.

We end this section with illustrations of Theorems~\ref{thrm:Gamma0} and~\ref{thrm:Gamma1} for
small values of $n$.

\begin{example}
\label{ex:GammasN2}
Suppose $n=2$. Then stability implies $k = 1$ so that $\bs{a} = (a_1,a_2)$ with
$a_1 < 0$ and $a_2 > 0$. Then Theorems~\ref{thrm:Gamma0}, \ref{thrm:Gamma1},
\ref{thrm:Gamma2}, and \ref{thrm:Gamma3} yield
\begin{align*}
    \gamma_0(\bs{a})
        &=  \frac{-1}{a_1 - a_2},
    &\gamma_1(\bs{a})
        &=  \frac{1 + a_1 - a_2}{2(a_1 - a_2)},
    &\gamma_2(\bs{a})
        &=  \frac{1 - (a_1 - a_2)^2}{12(a_1 - a_2)},
        \quad\quad\mbox{and}
    &\gamma_3(\bs{a})
        &=  \frac{1 - (a_1 - a_2)^2}{24(a_1 - a_2)}.
\end{align*}
\end{example}

\begin{example}
\label{ex:GammasN3}
If $n = 3$, multiplying by $-1$ if necessary, we may assume $k = 1$. Then
$\bs{a} = (a_1,a_2,a_3)$ with $a_1 < 0$ and $a_2, a_3 > 0$. The first two
Laurent coefficients are in this case given by
\begin{align*}
    \gamma_0(\bs{a})
        &=  \frac{-a_1}{(a_1 - a_2)(a_1 - a_3)},
            \quad\quad\mbox{and}
    &\gamma_1(\bs{a})
        &=  \frac{2a_1 + (a_3 - a_1)\gcd(a_1,a_2) + (a_2 - a_1)\gcd(a_1,a_3)}
                {2(a_1 - a_2)(a_1 - a_3)}.
\end{align*}
Of course, if $a_1$ is assumed coprime to both $a_2$ and $a_3$, this simplifies to
$\gamma_1(\bs{a}) = (a_2 + a_3)/(2(a_1 - a_2)(a_1 - a_3))$.
In this case, $g_{1,2} = a_3$, so unless each weight is $\pm 1$, $\gamma_2(\bs{a})$ always involves
sums of the form $\sum 1/((1-\zeta^{a_i})(1-\zeta^{a_j}))$.
\end{example}

\begin{example}
\label{ex:GammasN4}
If $n = 4$, then up to multiplying by $-1$, $k$ is either $1$ or $2$.
If $k = 1$, then
\begin{align*}
    \gamma_0(\bs{a})
        &=  \frac{-a_1^2}{(a_1 - a_2)(a_1 - a_3)(a_1 - a_4)},
            \quad\quad\mbox{and}
    \\
    \gamma_1(\bs{a})
        &=  \frac{a_1 \big(3a_1 - (a_1 - a_4)\gcd(a_1,a_2,a_3)
                - (a_1 - a_3)\gcd(a_1,a_2,a_4) - (a_1 - a_2)\gcd(a_1,a_3,a_4)\big)}
            {2(a_1 - a_2)(a_1 - a_3)(a_1 - a_4)}.
\end{align*}
If $k = 2$, then
\begin{align*}
    \gamma_0(\bs{a})
        &=  \frac{a_1 a_2 (a_3 + a_4) - (a_1 + a_2) a_3 a_4}
                {(a_1 - a_3)(a_1 - a_4)(a_2 - a_3)(a_2 - a_4)},
            \quad\quad\mbox{and}
    \\
    \gamma_1(\bs{a})
        &=  \frac{3\big((a_1 + a_2)a_3 a_4 - a_1 a_2 (a_3 + a_4)\big)}
                {2(a_1 - a_3)(a_1 - a_4)(a_2 - a_3)(a_2 - a_4)}
        \\&\quad + \frac{a_3 (a_1 - a_4) (a_2 - a_4) \gcd(a_1, a_2, a_3)
                        + (a_1 - a_3) (a_2 - a_3) a_4 \gcd(a_1, a_2, a_4)}
                    {2(a_1 - a_3)(a_1 - a_4)(a_2 - a_3)(a_2 - a_4)}
        \\&\quad - \frac{a_1 (a_2 - a_3)(a_2 - a_4) \gcd(a_1, a_3, a_4)
                        + a_2 (a_1 - a_3)(a_1 - a_4) \gcd(a_2, a_3, a_4)}
                    {2(a_1 - a_3)(a_1 - a_4)(a_2 - a_3)(a_2 - a_4)}.
\end{align*}
\end{example}


\section{The $a$-invariant and the Gorenstein property}
\label{sec:Goren}

Recall from Section~\ref{sec:Back} that the $a$-invariant of $\C[\C^n]^{\C_{\bs{a}}^\times}$ is
given by the degree of $\Hilb_{\bs{a}}(t)$, i.e. the degree of the numerator minus the degree
of the denominator (also equal to the pole order at infinity). Hence, the $a$-invariant can
be computed for specific $\bs{a}$ using the algorithm described in Section~\ref{sec:DirtyMethod}.
However, if $\C[\C^n]^{\C_{\bs{a}}^\times}$ is Gorenstein, then we may use Equation~\eqref{eq:AInvGammas}
and Theorems~\ref{thrm:Gamma0} and \ref{thrm:Gamma1} to give an explicit formula for
$a(\C[\C^n]^{\C_{\bs{a}}^\times})$ in terms of the weights as follows.

\begin{corollary}
\label{cor:a-invar}
Let $\bs{a} = (a_1,\ldots,a_n)$ be the weight vector for an effective action of $\C^\times$ on $\C^n$.
Assume that $a_i < 0$ for $i \leq k$ and $a_i > 0$ for $i > k$, and moreover that
$\bs{a}$ is stable ($1 < k < n$). If the invariant ring $\C[\C^n]^{\C_{\bs{a}}^\times}$
is Gorenstein, then the $a$-invariant is given by
\begin{align}
\label{eq:AInvFull}
    a(\C[\C^n]^{\C_{\bs{a}}^\times})
    &=
    \frac{1}{S_{n-2}(\bs{a})}\left(
        E_1(\bs{a}) S_{n-3}(\bs{a})
        + \sum\limits_{j=1}^k
        (1 - g_j)S_{n-2}(\bs{a}_j) \prod\limits_{q=k+1}^n (a_j - a_q)
        \right.
    \\ \nonumber & \left. \quad\quad\quad\quad\quad\quad\quad\quad\quad\quad +
        \sum\limits_{j=k+1}^n
        (1 - g_j)S_{n-2}(\bs{a}_j) \prod\limits_{p=1}^k (a_p - a_j) \right)
         - n,
\end{align}
where $g_j$ and $\bs{a}_j$ are defined as in Section~\ref{sec:Laurent}.
In particular, if every collection of $n-1$ weights has no nontrivial common factor, then
\begin{equation}
\label{eq:AInvRelPrime}
    a(\C[\C^n]^{\C_{\bs{a}}^\times})
    =
    \frac{E_1(\bs{a}) S_{n-3}(\bs{a})}{S_{n-2}(\bs{a})} - n.
\end{equation}
When $\bs{a}$ is generic, we can express Equation~\eqref{eq:AInvFull} as
\begin{equation}
\label{eq:AInvGeneric}
    a(\C[\C^n]^{\C_{\bs{a}}^\times})
    =
    \frac{\sum\limits_{i=1}^k (-1)^{i-1} a_i^{n-3}
        \prod\limits_{\substack{1\leq p<q \leq n\\ p,q\neq i}} (a_p - a_q)
        \left(\sum\limits_{\substack{j=1\\ j \neq i}}^n a_j
            - (g_j - 1)(c_i - c_j) \right)}
        {\sum\limits_{i=1}^k (-1)^{i} a_i^{n-2}
            \prod\limits_{\substack{1\leq p<q \leq n\\ p,q\neq i}} (a_p - a_q)}.
\end{equation}
\end{corollary}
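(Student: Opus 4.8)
The plan is to combine Equation~\eqref{eq:AInvGammas}, which expresses the $a$-invariant of a Gorenstein algebra in terms of $\gamma_0$ and $\gamma_1$, with the explicit formulas for $\gamma_0(\bs{a})$ and $\gamma_1(\bs{a})$ obtained in Theorems~\ref{thrm:Gamma0} and~\ref{thrm:Gamma1}. Since $\C[\C^n]^{\C_{\bs{a}}^\times}$ is a Cohen--Macaulay normal domain of Krull dimension $n-1$ by the results recalled in Section~\ref{sec:Back}, the hypothesis that it is Gorenstein lets us write $a(\C[\C^n]^{\C_{\bs{a}}^\times}) = -\tfrac{2\gamma_1(\bs{a})}{\gamma_0(\bs{a})} - (n-1)$. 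So the proof reduces to a computation of the ratio $\gamma_1(\bs{a})/\gamma_0(\bs{a})$ from the stated formulas and then some simplification.

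First I would substitute Equation~\eqref{eq:Gamma0} for $\gamma_0(\bs{a})$ and Equation~\eqref{eq:Gamma1} for $\gamma_1(\bs{a})$ into this expression. The term $\tfrac{E_1(\bs{a})S_{n-3}(\bs{a}) - S_{n-2}(\bs{a})}{2\prod_{p,q}(a_p-a_q)}$ in $\gamma_1$ divided by $\gamma_0 = \tfrac{-S_{n-2}(\bs{a})}{\prod_{p,q}(a_p-a_q)}$ contributes, up to the factor of $2$ and sign, the quantity $-\tfrac{E_1(\bs{a})S_{n-3}(\bs{a})}{S_{n-2}(\bs{a})} + 1$. For the remaining sum $\sum_{j=1}^n \tfrac{g_j-1}{2}\gamma_0(\bs{a}_j)$ appearing in $\gamma_1$, I would apply Equation~\eqref{eq:Gamma0} again, now to the shortened weight vector $\bs{a}_j$; note that $\bs{a}_j$ has either $k-1$ negative weights (if $j\leq k$) or $k$ negative weights (if $j>k$), and in either case $\gamma_0(\bs{a}_j) = \tfrac{-S_{n-2}(\bs{a}_j)}{\prod_{p,q}(a_p - a_q)}$ with the product over $p\leq k$, $q>k$ omitting index $j$; here I would invoke the convention of Remark~\ref{rem:kNotAbuse} to avoid separating the $j\leq k$ and $j>k$ cases in notation. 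When this is divided by $\gamma_0(\bs{a})$, the big Vandermonde-type products $\prod_{p=1}^k\prod_{q=k+1}^n(a_p-a_q)$ partially cancel, leaving for each $j$ a residual factor $\prod_{q=k+1}^n(a_j - a_q)$ when $j\leq k$ or $\prod_{p=1}^k(a_p - a_j)$ when $j>k$ — exactly the shape appearing in Equation~\eqref{eq:AInvFull}. Collecting these contributions and the constant $-(n-1)+1 = -(n-2)$, then rewriting $-(n-2) - 1 = -n$ ... more carefully, tracking that the ``$+1$'' from the first term combines with the $-(n-1)$, yields the claimed ``$-n$''.

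For the special case Equation~\eqref{eq:AInvRelPrime}, I would simply observe that the hypothesis that every collection of $n-1$ weights is coprime forces every $g_j = 1$, so each $(1-g_j)$ term drops out, leaving exactly $\tfrac{E_1(\bs{a})S_{n-3}(\bs{a})}{S_{n-2}(\bs{a})} - n$. For the generic reformulation Equation~\eqref{eq:AInvGeneric}, I would instead start from the generic expressions, Equations~\eqref{eq:Gamma0Generic} and~\eqref{eq:Gamma1Generic}, write both $\gamma_0$ and $\gamma_1$ over the common denominator $\prod_{1\leq p<q\leq n}(a_p - a_q)$ via the cofactor-expansion manipulation of Equation~\eqref{eq:CombinRational}, and then take the quotient, at which point the common denominator cancels and one is left with the ratio of the two numerator sums; a small bookkeeping check reconciles the $-(g_j-1)(c_i-c_j)$ notation (where $c_i=a_i$) in the numerator with the second sum in Equation~\eqref{eq:Gamma1Generic}.

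The main obstacle I anticipate is not conceptual but organizational: keeping the sign conventions and the index-set conventions straight through the cancellation, in particular verifying that the residual Vandermonde factors after dividing $\gamma_0(\bs{a}_j)$ by $\gamma_0(\bs{a})$ come out with the correct sign as $\prod_{q=k+1}^n(a_j-a_q)$ versus $\prod_{p=1}^k(a_p - a_j)$, and confirming that the additive constants assemble to precisely $-n$ rather than $-n\pm 1$. Once the $\gamma_0$, $\gamma_1$ formulas are in hand, no new ideas are needed; the argument is a direct substitution into Equation~\eqref{eq:AInvGammas} followed by algebraic simplification.
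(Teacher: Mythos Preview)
Your proposal is correct and follows essentially the same approach as the paper, which simply remarks that Equation~\eqref{eq:AInvFull} is a direct computation from Equations~\eqref{eq:Gamma0} and~\eqref{eq:Gamma1}, and that Equation~\eqref{eq:AInvGeneric} follows similarly from Equations~\eqref{eq:Gamma0Generic}, \eqref{eq:Gamma1FirstRat}, and~\eqref{eq:Gamma1LastTermsGeneric}. You have in fact supplied more of the bookkeeping than the paper does; the only additional point the paper makes explicit is that $S_{n-2}(\bs{a})\neq 0$ (so the quotient is defined), which follows from $\gamma_0(\bs{a})\neq 0$ in Theorem~\ref{thrm:Gamma0}.
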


The proof of Equation~\eqref{eq:AInvFull} is a simple computation using Equations~\eqref{eq:Gamma0}
and \eqref{eq:Gamma1}; Equation~\eqref{eq:AInvGeneric} is similarly derived using
Equations~\eqref{eq:Gamma0Generic}, \eqref{eq:Gamma1FirstRat}, and \eqref{eq:Gamma1LastTermsGeneric}.
The fact that $S_{n-2}(\bs{a}) \neq 0$ so that this expression is defined is a consequence of the fact
that $\gamma_0 \neq 0$, see Theorem~\ref{thrm:Gamma0}.

\begin{example}
\label{ex:AInvN2}
Suppose $n=2$ so that $\bs{a} = (a_1,a_2)$ with
$a_1 < 0$ and $a_2 > 0$. Then using Example~\ref{ex:GammasN2},
\[
    \frac{2\gamma_1(\bs{a})}{\gamma_0(\bs{a})}
    =
    a_2 - a_1 - 1
\]
so that if $\C[\C^n]^{\C_{\bs{a}}^\times}$ is Gorenstein,
\[
    a(\C[\C^n]^{\C_{\bs{a}}^\times})
    =
    -\frac{2\gamma_1(\bs{a})}{\gamma_0(\bs{a})} - \dim(\C[\C^n]^{\C_{\bs{a}}^\times})
    =
    a_1 - a_2.
\]
\end{example}

\begin{example}
\label{ex:AInvN3}
If $n=3$ and $k=1$ so that $\bs{a} = (a_1,a_2,a_3)$ with
$a_1 < 0$ and $a_2,a_3 > 0$, then using Example~\ref{ex:GammasN3}, we have
\[
    \frac{2\gamma_1(\bs{a})}{\gamma_0(\bs{a})}
    =
    - \frac{2a_1 + (a_3 - a_1)\gcd(a_1,a_2) + (a_2 - a_1)\gcd(a_1,a_3)}
                {a_1}
\]
so that if $\C[\C^n]^{\C_{\bs{a}}^\times}$ is Gorenstein
\[
    a(\C[\C^n]^{\C_{\bs{a}}^\times})
    =
    -\frac{2\gamma_1(\bs{a})}{\gamma_0(\bs{a})} - \dim(\C[\C^n]^{\C_{\bs{a}}^\times})
    =
    \frac{(a_3 - a_1)\gcd(a_1,a_2) + (a_2 - a_1)\gcd(a_1,a_3)}
                {a_1}.
\]
\end{example}

Of course, as the $a$-invariant $a(\C[\C^n]^{\C_{\bs{a}}^\times})$ and dimension
$\dim(\C[\C^n]^{\C_{\bs{a}}^\times})$ are always integers, an immediate consequence of
Equation~\eqref{eq:AInvGammas} is the following.

\begin{corollary}
\label{cor:GorensteinInteger}
Let $\bs{a} = (a_1,\ldots,a_n)$ be the weight vector for an effective action of $\C^\times$ on $\C^n$.
Assume that $a_i < 0$ for $i \leq k$ and $a_i > 0$ for $i > k$, and moreover that
$\bs{a}$ is stable ($1 < k < n$). If the invariant ring $\C[\C^n]^{\C_{\bs{a}}^\times}$ is Gorenstein, then
$2\gamma_1(\bs{a})/\gamma_0(\bs{a}) \in \Z$.
\end{corollary}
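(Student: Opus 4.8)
The plan is to read this off directly from Equation~\eqref{eq:AInvGammas}. The whole point is that the rational number $2\gamma_1(\bs{a})/\gamma_0(\bs{a})$, which a priori is just a quotient of two Laurent coefficients, is pinned down to be $-a(R)-\dim(R)$ the moment we know $R=\C[\C^n]^{\C_{\bs{a}}^\times}$ is Gorenstein, and both of those terms are integers.

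Concretely, I would proceed as follows. First, check that $R$ meets the running hypotheses under which Equation~\eqref{eq:AInvGammas} is stated: by \cite[Proposition 6.4.1]{BrunsHerzog} the invariant ring $R$ is a normal domain, and by the Hochster--Roberts Theorem \cite{HochsterTori,HochsterRoberts} it is Cohen--Macaulay, so together with the Gorenstein assumption the setup of \cite[Theorem 4.4]{Stanley} — and hence of \cite[Equation (3.32)]{PopovVinberg}, recalled here as Equation~\eqref{eq:AInvGammas} — applies to $R$. Second, observe that the two quantities on the right-hand side of Equation~\eqref{eq:AInvGammas} are integers: $a(R)$ is by definition the degree of the rational function $\Hilb_{\bs{a}}(t)$ (numerator degree minus denominator degree), hence an integer, and $\dim R = n-1$ by Theorem~\ref{thrm:Gamma0}. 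Substituting gives $2\gamma_1(\bs{a})/\gamma_0(\bs{a}) = -a(R)-(n-1) \in \Z$, which is exactly the assertion.

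I do not expect any real obstacle; the mathematical content is entirely in Equation~\eqref{eq:AInvGammas}, which is quoted from the literature. The only point requiring a word of care is that the ratio $\gamma_1(\bs{a})/\gamma_0(\bs{a})$ is well defined, i.e. $\gamma_0(\bs{a})\neq 0$; this is the last assertion of Theorem~\ref{thrm:Gamma0}, following from the fact that the pole order of $\Hilb_{\bs{a}}(t)$ at $t=1$ equals $\dim R$ by \cite[Lemma 1.4.6]{DerskenKemperBook}. With that in hand the corollary is immediate.
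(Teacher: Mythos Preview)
Your argument is correct and essentially identical to the paper's: the paper simply observes, immediately before stating the corollary, that since $a(\C[\C^n]^{\C_{\bs{a}}^\times})$ and $\dim(\C[\C^n]^{\C_{\bs{a}}^\times})$ are always integers, the result is an immediate consequence of Equation~\eqref{eq:AInvGammas}. Your additional remarks checking the Cohen--Macaulay normal domain hypotheses and the nonvanishing of $\gamma_0(\bs{a})$ are appropriate elaborations, but the underlying idea is the same.
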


The converse of Corollary~\ref{cor:GorensteinInteger} is false, which we illustrate with the following.

\begin{example}
\label{ex:AInvIntNotGoren}
Let $\bs{a} = \{-1,-2,1,14\}$. Using the algorithm described in Section~\ref{sec:DirtyMethod}, one
computes that
\[
    \Hilb_{\bs{a}}(t) = \frac{1 + t^{3} + t^{6} + 2t^{9} + t^{10} + t^{11} + 2t^{12} + t^{13} + t^{14} + t^{15}}
                        {(1 - t^2)(1 - t^8)(1 - t^{15})},
\]
with $\gamma_0(\bs{a}) = 1/20$ and $\gamma_1(\bs{a}) = 3/40$ so that
$2\gamma_1(\bs{a})/\gamma_0(\bs{a}) = 3$. However, if $\C[\C^n]^{\C_{\bs{a}}^\times}$ were Gorenstein,
then Equation~\eqref{eq:AInvGammas} would imply that $a(\C[\C^n]^{\C_{\bs{a}}^\times}) = 6$, which does
not coincide with the degree of $\Hilb_{\bs{a}}(t)$. Moreover, by inspection, $\Hilb_{\bs{a}}(t)$ does not
satisfy Equation~\eqref{eq:StanleyGoren} for any integer $a$.
\end{example}

Though we have not identified a counterexample to the converse of Corollary~\ref{cor:GorensteinInteger}
with $n = 3$, they seem to be common when $n = 4$ and $k = 2$; other examples include
$(-1, -2, 4, 8)$;  $(-1, -2, 5, 6)$; $(-1, -3, 1, 27)$; $(-1, -3, 2, 9)$; $(-1, -3, 3, 9)$;
$(-1, -3, 4, 6)$; $(-1, -3, 12, 23)$; and $(-1, -4, 2, 2)$.

For a specific $\bs{a}$ with small weights, the most direct way of determining whether
$\C[\C^n]^{\C_{\bs{a}}^\times}$ is Gorenstein using the results in this paper is to
compute $\Hilb_{\bs{a}}(t)$ using the algorithm
of Section~\ref{sec:DirtyMethod} and testing to see if it satisfies Equation~\eqref{eq:StanleyGoren}.
However, as the weights become large, Corollary~\ref{cor:GorensteinInteger} can be a surprisingly useful way
of quickly determining that the invariant ring associated to a weight vector $\bs{a}$ is not
Gorenstein. For instance, for the weight vector
$\bs{a} = (-501, 500, 503)$, a quick computation by hand using Corollary~\ref{cor:GorensteinInteger}
demonstrates that $\C[\C^n]^{\C_{\bs{a}}^\times}$ is not Gorenstein, as
$2\gamma_1(\bs{a})/\gamma_0(\bs{a}) = -1003/501 \notin\Z$. However, our implementation of the algorithm
in Section~\ref{sec:DirtyMethod} took over six hours on a desktop PC to compute $\Hilb_{(-501,500,503)}(t)$
and yield the same conclusion.

We conclude this section with some illustrations of how Theorem~\ref{thrm:HilbSer} can be used to identify
classes of weight vectors with Gorenstein invariants. We first give a quick proof of the known fact that
when $n=2$, the invariant ring $\C[\C^n]^{\C_{\bs{a}}^\times}$ is always polynomial; see \cite{WehlauPolynomial}.

\begin{corollary}
\label{cor:GorenN2}
Suppose $\bs{a} = (a_1,a_2) \in \Z^2$ with $a_1 < 0$ and $a_2 > 0$. Then $\C[\C^n]^{\C_{\bs{a}}^\times}$
is a polynomial ring generated by $x_1^{a_2} x_2^{a_1}$. In particular, $\C[\C^n]^{\C_{\bs{a}}^\times}$
is Gorenstein with $a$-invariant $a_1 - a_2$.
\end{corollary}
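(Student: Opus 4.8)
The plan is to avoid the Laurent-coefficient machinery entirely and instead identify the invariant ring directly. First I would reduce to the faithful case: if $d=\gcd(a_1,a_2)>1$, the $\C^\times$-action factors through $w\mapsto w^d$, so $\C[\C^n]^{\C^\times_{\bs a}}=\C[\C^n]^{\C^\times_{(a_1/d,\,a_2/d)}}$, and we may assume $\gcd(a_1,a_2)=1$ exactly as in Section~\ref{sec:Back}. (Note that with this reduction the generator named in the statement should read $x_1^{a_2}x_2^{-a_1}$, since $a_1<0$ makes $x_2^{a_1}$ not a polynomial; I will use this corrected monomial below.)

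Next, because the $\C^\times$-action on $\C^2$ is diagonal, $\C[x_1,x_2]$ is the direct sum of its weight spaces, each spanned by monomials, so $\C[\C^n]^{\C^\times_{\bs a}}$ is spanned by the invariant monomials $x_1^{m_1}x_2^{m_2}$, i.e. those with $m_1,m_2\geq 0$ and $a_1m_1+a_2m_2=0$. Writing $a_1m_1=-a_2m_2$ with $a_1<0<a_2$ and $\gcd(a_1,a_2)=1$, the integer solutions are $(m_1,m_2)=(ja_2,-ja_1)$ for $j\in\Z$, and the nonnegative ones are exactly those with $j\geq 0$. Hence every invariant monomial is a nonnegative power of $\mu:=x_1^{a_2}x_2^{-a_1}$, so $\C[\C^n]^{\C^\times_{\bs a}}=\C[\mu]$ is a polynomial ring in one variable of degree $a_2-a_1$; equivalently, $\Hilb_{\bs a}(t)=1/(1-t^{a_2-a_1})$, which one may also read off from Proposition~\ref{prop:HilbSumGeneric} (with $k=1$, summing $1/(1-\eta\,t^{(a_1-a_2)/a_1})$ over the $(-a_1)$th roots of unity $\eta$, using that $\zeta\mapsto\zeta^{a_2}$ permutes them) or from a direct residue evaluation of Equation~\eqref{eq:MolienWeil}.

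Finally, a polynomial ring in one variable over $\C$ is regular, hence Gorenstein, and its $a$-invariant is the degree of $\Hilb_{\bs a}(t)=1/(1-t^{a_2-a_1})$, namely $0-(a_2-a_1)=a_1-a_2$; this matches Example~\ref{ex:AInvN2} and is consistent with $a(R)=-\deg\mu$ for $R=\C[\mu]$. There is no genuine obstacle here: the only points that need care are the reduction to $\gcd(a_1,a_2)=1$ (without which the claimed generator and $a$-invariant would be off by a factor of $d$) and keeping the signs straight so that the exponent $-a_1$ is positive.
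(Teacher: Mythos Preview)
Your argument is correct, and you are right to flag the sign typo in the generator (the paper's statement should indeed read $x_1^{a_2}x_2^{-a_1}$).

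Your route differs from the paper's. The paper stays within its Hilbert-series framework: it applies Theorem~\ref{thrm:HilbSer} (equivalently your parenthetical use of Proposition~\ref{prop:HilbSumGeneric}), rewrites the sum over $(-a_1)$th roots of unity via the permutation $\zeta\mapsto\zeta^{a_2}$, and then invokes an identity of Gessel to evaluate $\frac{1}{-a_1}\sum_{\zeta^{-a_1}=1}\frac{1}{1-\zeta t^{(a_1-a_2)/a_1}}=\frac{1}{1-t^{a_2-a_1}}$; only afterwards does it name the generator. You instead bypass the residue machinery entirely and determine the invariant monomials directly by solving $a_1m_1+a_2m_2=0$ in $\Z_{\geq 0}^2$ under $\gcd(a_1,a_2)=1$, which immediately exhibits $\C[\C^2]^{\C^\times_{\bs a}}=\C[\mu]$ and hence the Hilbert series. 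Your approach is more elementary and self-contained (no Gessel identity needed) and makes the ring structure transparent first; the paper's approach has the virtue of illustrating its main theorem in the simplest nontrivial case. Both lead to the same $\Hilb_{\bs a}(t)$ and $a$-invariant.
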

\begin{proof}
Assume for simplicity that $\gcd(a_1,a_2) = 1$, and then by Theorem~\ref{thrm:HilbSer},
\begin{align*}
    \Hilb_{\bs{a}}(t)
    &=
    \sum\limits_{\zeta^{-a_1} = 1}
        \frac{1}{-a_1 (1 - \zeta^{a_2} t^{(a_1-a_2)/a_1})}
    \\&=
    \frac{-1}{a_1}\sum\limits_{\zeta^{-a_1} = 1}
        \frac{1}{1 - \zeta t^{(a_1-a_2)/a_1}},
\end{align*}
where the second equation is by reordering terms, as $\zeta^{a_2}$ simply permutes the set of $a_1$st
roots of unity. By \cite[Theorem 3.1]{Gessel}, this is equal to $1/(1 - t^{a_2 - a_1})$. Then as
$x_1^{a_2} x_2^{a_1}$ is clearly $\C^\times_{\bs{a}}$-invariant, it then generates $\C[\C^n]^{\C_{\bs{a}}^\times}$,
and the result follows.
\end{proof}

Finally, for general $n$, we indicate a large class of $\bs{a}$ for which $\C[\C^n]^{\C_{\bs{a}}^\times}$
is Gorenstein.

\begin{corollary}
\label{cor:GorenK1}
Let $\bs{a}\in \Z^n$ with each $a_i \neq 0$ and suppose $k=1$ so that $a_1 < 0$ and $a_i > 0$ for $i > 1$.
If $a_1$ divides $\sum_{j=2}^n a_j$, then $\C[\C^n]^{\C_{\bs{a}}^\times}$ is Gorenstein
with $a$-invariant $(\sum_{j=1}^n a_j)/a_1 - n$.
\end{corollary}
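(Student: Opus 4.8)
The plan is to make the divisibility hypothesis visible in an explicit product formula for $\Hilb_{\bs{a}}(t)$ and then invoke Stanley's palindromy criterion \eqref{eq:StanleyGoren}. First I would record the standing facts: when $k=1$ the weight vector $\bs{a}$ is automatically generic, so the results of Sections~\ref{sec:HilbSer} and~\ref{sec:DirtyMethod} are available; moreover $R := \C[\C^n]^{\C_{\bs{a}}^\times}$ is a Cohen--Macaulay normal domain (Proposition~6.4.1 of \cite{BrunsHerzog} together with the Hochster--Roberts theorem), and by stability $\dim R = n-1$ by \cite[Remark~2]{WehlauPopov}, so \eqref{eq:StanleyGoren} applies.

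Set $b := -a_1 > 0$ and $c_j := a_j - a_1 > 0$ for $j = 2,\ldots,n$. A monomial $z_1^{m_1}\cdots z_n^{m_n}$ is $\C^\times_{\bs{a}}$-invariant exactly when $b\,m_1 = \sum_{j=2}^n a_j m_j$; for nonnegative $m_2,\ldots,m_n$ this forces $b \mid \sum_{j=2}^n a_j m_j$, equivalently $b \mid \sum_{j=2}^n c_j m_j$, and then determines a nonnegative $m_1$, with the degree of the monomial equal to $\tfrac1b\sum_{j=2}^n c_j m_j$. Writing $G(t) = \prod_{j=2}^n (1-t^{c_j})^{-1} = \sum_{m\ge0} G_m t^m$, it follows that the number of invariant monomials of degree $d$ is $G_{bd}$, so that
\[
    \Hilb_{\bs{a}}(t) = \sum_{d\ge0} G_{bd}\, t^d = (U_b G)(t)
\]
in the notation of Section~\ref{sec:DirtyMethod} (this also follows from Proposition~\ref{prop:HilbSumGeneric}). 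The rational function $G$ obeys the functional equation $G(1/t) = (-1)^{n-1} t^{C} G(t)$, where $C := \sum_{j=2}^n c_j = \sum_{j=1}^n a_j - n\,a_1$, since $\prod_{j=2}^n(1-t^{-c_j})^{-1} = (-1)^{n-1}t^{C}\prod_{j=2}^n(1-t^{c_j})^{-1}$.

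The crucial step is to transfer this functional equation through $U_b$. Starting from $(U_b G)(t^b) = \tfrac1b\sum_{\zeta^b = 1} G(\zeta t)$, substituting $t \mapsto t^{-1}$, writing $G(\zeta/t) = G\bigl(1/(\zeta^{-1}t)\bigr)$ and applying the functional equation for $G$ to each summand, one collects a common factor $\zeta^{-C}$; this factor equals $1$ for \emph{every} $b$-th root of unity $\zeta$ \emph{precisely because} the hypothesis $a_1 \mid \sum_{j=2}^n a_j$ is equivalent to $b \mid C$. Reindexing $\zeta\mapsto\zeta^{-1}$ in the resulting sum and setting $D := C/b$, one obtains
\[
    \Hilb_{\bs{a}}(1/t) = (-1)^{n-1}\, t^{D}\, \Hilb_{\bs{a}}(t).
\]
I expect this root-of-unity bookkeeping — together with the check that it is a valid identity of rational functions and not merely of formal power series — to be the only point requiring any care; the rest is routine.

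Finally, comparing the last display with \eqref{eq:StanleyGoren} and noting $(-1)^{\dim R} = (-1)^{n-1}$ shows that $R$ is Gorenstein, and it reads off $a(R) = -D = -C/b = C/a_1 = \bigl(\textstyle\sum_{j=1}^n a_j\bigr)/a_1 - n$, which is exactly the claimed value. If desired, one can cross-check the $n=2$ case, where $a_1\mid a_2$ makes $R$ the polynomial ring on $z_1^{a_2/b}z_2$ of degree $a_2/b+1$, so $a(R) = a_2/a_1 - 1$, in agreement with the formula.
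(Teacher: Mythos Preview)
Your proof is correct and follows essentially the same route as the paper: both write $\Hilb_{\bs{a}}(t)$ as an average over $(-a_1)$th roots of unity, substitute $t\mapsto 1/t$, and use the divisibility hypothesis to kill the resulting $\zeta^{\sum_{j\ge2}a_j}$ phase before reindexing $\zeta\mapsto\zeta^{-1}$ and invoking \eqref{eq:StanleyGoren}. The only cosmetic difference is that you derive the averaged expression directly from monomial combinatorics via the $U_b$ operator, whereas the paper simply quotes Theorem~\ref{thrm:HilbSer}.
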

\begin{proof}
If $k=1$, then the weight vector is always generic, and Theorem~\ref{thrm:HilbSer} yields
\[
    \Hilb_{\bs{a}}(t)
    =
    \sum\limits_{\zeta^{-a_1} = 1}
        \frac{1}{-a_1 \prod\limits_{j=2}^n (1 - \zeta^{a_j} t^{(a_1-a_j)/a_1})}.
\]
Applying Equation~\eqref{eq:StanleyGoren}, we have
\begin{align*}
    \Hilb_{\bs{a}}(1/t)
    &=
    \sum\limits_{\zeta^{-a_1} = 1}
        \frac{1}{-a_1 \prod\limits_{j=2}^n (1 - \zeta^{a_j} t^{-(a_1-a_j)/a_1})}
    \\&=
    t^{n - 1 - \sum_{j=2}^n a_j/a_1}
    \sum\limits_{\zeta^{-a_1} = 1}
        \frac{\zeta^{-\sum_{j=2}^n a_j}}
        {-a_1 \prod\limits_{j=2}^n (\zeta^{-a_j} t^{(a_1-a_j)/a_1} - 1)}.
\end{align*}
Reordering terms by replacing $\zeta$ with $\zeta^{-1}$ yields
\[
    \Hilb_{\bs{a}}(1/t)
    =   (-1)^{n-1} t^{n - 1 - (\sum_{j=2}^n a_j)/a_1}
        \sum\limits_{\zeta^{-a_1} = 1}
        \frac{\zeta^{\sum_{j=2}^n a_j}}
        {-a_1 \prod\limits_{j=2}^n (1 - \zeta^{a_j} t^{(a_1-a_j)/a_1})},
\]
which, if $a_1$ divides $\sum_{j=2}^n a_j$, is equal to
\[
    (-1)^{n-1} t^{n - (\sum_{j=1}^n a_j)/a_1} \Hilb_{\bs{a}}(t).
    \qedhere
\]
\end{proof}

Note that the condition of Corollary~\ref{cor:GorenK1} is sufficient though not necessary:
the ring of invariants corresponding to the weight vector $\bs{a} = (-3,1,3)$ has Hilbert series
\[
    \frac{1}{(1 - t^2)(1 - t^4)}
\]
and hence is Gorenstein with $a$-invariant $-6$.

\appendix

\section{The Laurent expansion of the Hilbert series of a Cohen-Macaulay algebra}
\label{app:CMAlg}

Let $A = \oplus_{i=0}^\infty A_i$ be a Cohen-Macaulay algebra over the field $\mathbb{K}$, let
$d = \dim(A)$, and let $x_1,x_2,\dots,x_d$ be a homogeneous system of parameters with respective
degrees $\alpha_1,\alpha_2,\dots, \alpha_d$. Furthermore, let $\beta_1,\beta_2,\dots,\beta_r$ be the
degrees of a basis of the free $\mathbb{K}[x_1,x_2,\dots,x_d]$-module $A$. Such a choice of homogeneous
system of parameters and module generators is called a \emph{Hironaka decomposition}. The Hilbert series
of $A$ can be written as
\[
    \Hilb_A(t)
        =   \sum\limits_{m=0}^\infty \dim_{\mathbb{K}}(A_m)\:t^m
        =   \frac{\sum_{i=1}^r t^{\beta_i}}{\prod_{j=0}^d (1 - t^{\alpha_j})},
\]
see \cite[Equation 3.28]{PopovVinberg} or \cite[Corollary 2.3.4]{SturmfelsBook}. Our aim in this section is to
elaborate a formula for the Laurent expansion
\[
    \Hilb_A(t)
        =   \sum\limits_{m=0}^\infty \gamma_m(A)(1 - t)^{m - d}
\]
using symmetric functions in the $\alpha_1,\alpha_2,\ldots,\alpha_d$ and $\beta_1,\beta_2,\ldots,\beta_r$, respectively.
For the $\alpha_i$'s we use the elementary symmetric functions, which we denote
\[
    e_k     :=  E_k(\alpha_1,\alpha_2,\dots, \alpha_d)
            =   \sum\limits_{0\le j_1<j_2<\cdots <j_k\le d}
                    \alpha_{j_1}\alpha_{j_2} \cdots \alpha_{j_d}, \quad k\ge 0,
\]
while for the $\beta$'s, we use the power sums
\[
    p_k     :=  P_k(\beta_1,\beta_2,\dots, \beta_r)
            =   \sum\limits_{i=1}^r \beta_i^k, \quad 0 \leq k \leq r.
\]

It is well-known \cite[Equations 3.29 and 3.30]{PopovVinberg} that
\begin{equation}
\label{eq:gammaexpressions}
    \gamma_0(A) =   \frac{r}{e_d},
    \quad\quad\mbox{and}\quad\quad
    \gamma_1(A) =   \frac{\frac{r}{2}(e_1 - d) - p_1}{e_d}.
\end{equation}
In order to deduce a generalization of these formulas, we recall the definition of the Todd
polynomials $\operatorname{td}_j$ using the generating function
\begin{align*}
    \prod\limits_{i=1}^d \frac{x \alpha_i}{1 - e^{-x \alpha_i}}
        &=  :\sum\limits_{j=0}^\infty \operatorname{td}_j(e_1,e_2,\dots,e_d) \:x^j         \\
        &=  1 + \frac{e_1}{2}x + \frac{e_1^2 + e_2}{12}x^2 + \frac{e_1e_2}{24}x^3
                + \frac{-e_1^4 + 4e_1^2e_2 + e_1e_3 + 3e_2^2 - e_4}{720}x^4 + \cdots ,
\end{align*}
see \cite[Section 1.7]{Hirzebruch}. Substituting $x = -\log t$ we find
\[
    e_d(-\log t)^d \prod_{i=1}^d \frac{1}{1 - t^{\alpha_i}}
        =   \sum\limits_{j=0}^\infty \operatorname{td}_j(e_1,e_2,\dots,e_d)\; (-\log t)^j.
\]
For $k\in \Z$, we define $\lambda_m$ by the expansion
\[
    (-\log t)^k     =:      \sum\limits_{m=0}^\infty \lambda_m(k)(1-t)^{m+k}
\]
and observe that $\lambda_m(k)$ is a polynomial in $k$ of degree $m$. The first few $\lambda_m(k)$ are
\begin{align*}
    \lambda_0(k)    =   1, \quad
    \lambda_1(k)    =   \frac{k}{2}, & \quad
    \lambda_2(k)    =   \frac{k}{24} (3k+5),             \\
    \lambda_3(k)    =   \frac{k}{48} (k^2+5k+6), \quad
    \lambda_4(k)   &=   \frac{k}{5760} (15k^3 + 150k^2 + 485k + 502).
\end{align*}
Note that the $\lambda_m(k)$ are determined implicitly by the recursion
\begin{align*}
    (m+k)\lambda_m(k) - k\lambda_m(k-1)
        &=      (m + k - 1)\lambda_{m-1}(k),
    \\
    \lambda_0(k) = 1,
    \quad\quad\quad
    \lambda_m(0)
    &= 0
    \quad\mbox{for}\quad m > 1.
\end{align*}

Defining
\[
    \varphi_m   :=  \varphi_m(e_1,e_2,\dots,e_d)
                =   \sum_{k=0}^m \lambda_{m-k}(k-d) \operatorname{td}_k(e_1,e_2,\dots,e_d),
\]
we find
\[
    \prod\limits_{i=1}^d \frac{1}{1 - t^{\alpha_i}}
        =   \frac{1}{e_d} \sum\limits_{m=0}^\infty \frac {\varphi_m(e_1,e_2,\dots,e_d)}{(1 - t)^{d - m}}.
\]
The first few $\varphi_m$ are
\begin{align*}
    \varphi_0   &=  1 , \quad
    \varphi_1   =   \frac{1}{2} (e_1 - d), \quad
    \varphi_2   =   \frac{1}{12} \left(e_2 + e_1^2 - 3(d-1)e_1 + \frac{d}{2}(3d - 5)\right),
    \\
    \varphi_3   &=  \frac{1}{24}\left(e_2e_1 - (d - 2)(e_2 + e_1^2) + \frac{d - 1}{2}(3d - 8)e_1
                        - \frac{d(d - 2)(d - 3)}{2} \right).
\end{align*}

In order to incorporate the numerator $\sum_i t^{\beta_i}$, we recall the definition of the Stirling numbers
of the first kind $s(m,k)$ in terms of falling factorials
\[
    x(x - 1)(x - 2)\cdots (x - m + 1) =   \sum_{k=0}^m s(m,k) x^k,
\]
see \cite[Section 24.1.3]{AbramowitzStegun}.
Using
\[
    \sum_{i=1}^r t^{\beta_i}
        =   \sum\limits_{i=1}^r \big(1 - (1 - t)\big)^{\beta_i}
        =   \sum\limits_{i=1}^r \sum\limits_{j=0}^{\beta_i} (-1)^j \binom{\beta_i}{j} (1 - t)^j,
\]
we arrive at
\begin{align*}
    \operatorname{Hilb}_A(t)
        &=      \frac{1}{e_d} \sum\limits_{m=0}^\infty \frac{\varphi_m \sum_{i=1}^r t^{\beta_i}}{(1 - t)^{d - m}}
        \\&=    \frac{1}{e_d} \sum\limits_{m=0}^\infty \frac{\varphi_m}{(1-t)^{d - m}}
                    \sum_{i=1}^r \sum_{j=0}^\infty (-1)^j \binom{\beta_i}{j} (1 - t)^j
        \\&=    \frac{1}{e_d} \sum\limits_{\ell=0}^\infty \frac{1}{(1 - t)^{d - \ell}}
                    \sum_{i=1}^r \sum_{j=0}^{\beta_i} (-1)^j \binom{\beta_i}{j} \varphi_{\ell-j}
        \\&=    \frac{1}{e_d} \sum\limits_{\ell=0}^\infty \frac{\sum_{j=0}^\ell \frac{(-1)^j}{j!}
                    \varphi_{\ell-j} \sum_{k=0}^j s(j,k)p_k}{(1-t)^{d-\ell}}.
\end{align*}
The terms corresponding to $\ell=0,1$ reproduce the formulas in Equation~\eqref{eq:gammaexpressions}.
We evaluate the terms corresponding to $\ell=2$:
\begin{align*}
    \gamma_2(A)
        &=      \frac{1}{e_d} \left(r\varphi_2 - \varphi_1 p_1 + \frac{1}{2}(p_1 + p_2) \right)
        \\&=    \frac{1}{12e_d}\left( r\left(e_2 + e_1^2 - 3(d - 1)e_1 + \frac{d}{2}(3d - 5)\right)
                + 6p_2 + 6p_1 (d - 1 - e_1) \right),
\end{align*}
and the terms corresponding to $\ell=3$:
\begin{align*}
    \gamma_3(A)
        &=      \frac{1}{e_d}\left( r\varphi_3 - \varphi_2 p_1 + \frac{\varphi_1}{2} (p_2 - p_1)
                - \frac{1}{6}(2p_1 - 3p_2 + p_3) \right)
        \\&=    \frac{r}{24e_d}\left(e_2e_1 - (d - 2)(e_2 + e_1^2) + \frac{d-1}{2}(3d - 8)e_1
                - \frac{d(d - 2)(d - 3)}{2}\right)
            \\&\quad   - \frac{p_1}{12e_d} \left(e_2 + e_1^2 - 3(d - 1)e_1 + \frac{d}{2}(3d - 5)\right)
                        + \frac{1}{4e_d} (e_1 - d)(p_2 - p_1) - \frac{1}{6e_d} (2p_1 - 3p_2 + p_3).
\end{align*}


\bibliographystyle{amsplain}
\bibliography{CHHS-S1Hilb}

\end{document}